\definecolor{ufogreen}{rgb}{0.24, 0.82, 0.44}
\begin{document}


\newtheorem{theorem}{Théorème}[section]
\newtheorem{theore}{Théorème}
\newtheorem{definition}[theorem]{Définition}
\newtheorem{proposition}[theorem]{Proposition}
\newtheorem{corollary}[theorem]{Corollaire}
\newtheorem*{con}{Conjecture}
\newtheorem*{remark}{Remarque}
\newtheorem*{remarks}{Remarques}
\newtheorem*{pro}{Problème}
\newtheorem*{examples}{Exemples}
\newtheorem*{example}{Exemple}
\newtheorem{lemma}[theorem]{Lemme}


\title{$\lambda$-quiddités sur des produits directs d'anneaux}

\author{Flavien Mabilat}

\date{}

\keywords{modular group; $\lambda$-quiddity; irreducibility; direct product of rings}

\address{
Flavien Mabilat,
Laboratoire de Mathématiques de Reims,
UMR9008 CNRS et Université de Reims Champagne-Ardenne, 
U.F.R. Sciences Exactes et Naturelles 
Moulin de la Housse - BP 1039 
51687 Reims cedex 2,
France
}
\email{flavien.mabilat@univ-reims.fr}

\maketitle

\selectlanguage{french}
\begin{abstract}

L'objectif de cet article est de poursuivre l'étude de la notion de $\lambda$-quiddité sur un anneau, apparue lors de l'étude des frises de Coxeter. Pour cela, on va s'intéresser ici aux situations où l'anneau utilisé peut être vu comme un produit direct d'anneaux commutatifs unitaires. En particulier, on va considérer les cas des produits directs contenant au moins deux anneaux de caractéristique $0$ et on va s'intéresser également à certains produits du type $\mathbb{Z}/n\mathbb{Z} \times \mathbb{Z}/m\mathbb{Z}$.

\end{abstract}

\selectlanguage{english}
\begin{abstract}

The aim of this article is to continue the study of the notion of $\lambda$-quiddity over a ring, which appeared during the study of Coxeter's friezes. For this, we will focus here on situations where the ring used can be seen as a direct product of unitary commutative rings. In particular, we will consider the cases of direct products of rings containing at least two rings of characteristic $0$ and we will also consider some products of the type $\mathbb{Z}/n\mathbb{Z} \times \mathbb{Z}/m\mathbb{Z}$.

\end{abstract}

\selectlanguage{french}

\thispagestyle{empty}

\noindent {\bf Mots clés:} groupe modulaire; $\lambda$-quiddité; irréductibilité; produit direct d'anneaux 
\\
\begin{flushright}
\og \textit{Penser, c'est être à la recherche d'un promontoire.} \fg
\\Michel de Montaigne, \textit{Essais.}
\end{flushright}

\section{Introduction}

Dans un certain nombre de problèmes mathématiques, on est rapidement amené à rencontrer les matrices suivantes : 
\[M_{n}(a_{1},\ldots,a_{n}):=\begin{pmatrix}
    a_{n} & -1 \\
    1  & 0 
   \end{pmatrix}\ldots \begin{pmatrix}
    a_{1} & -1 \\
    1  & 0 
   \end{pmatrix}.\]
\noindent En effet, ces dernières apparaissent naturellement dans l'étude du groupe modulaire, dans la formalisation matricielle des équations de Sturm-Liouville discrètes ou encore dans les formules concernant les réduites des fractions continues de Hirzebruch-Jung (voir par exemple l'introduction de \cite{O}). Parmi toutes ces applications, l'une des plus notable concerne la construction des frises de Coxeter. En effet, ces objets combinatoires, constitués d'arrangements de nombres dans le plan vérifiant certaines relations arithmétiques, ont de nombreuses ramifications et constituent aujourd'hui un vaste thème d'étude (voir par exemple \cite{Mo1}). Or, la construction de ces frises sur un sous-ensemble $E$ d'un anneau commutatif unitaire est liée aux solutions de l'équation suivante sur $E$ (voir par exemple \cite{CH} proposition 2.4) :
\[M_{n}(a_{1},\ldots,a_{n})=-Id.\]
\noindent Cela conduit naturellement à l'étude sur un anneau commutatif unitaire $A$ de l'équation généralisée ci-dessous (parfois appelée équation de Conway-Coxeter) :
\begin{equation}
\label{a}
\tag{$E_{A}$}
M_{n}(a_1,\ldots,a_n)=\pm Id.
\end{equation} 

\medskip

Les solutions de cette équation sont appelées $\lambda$-quiddités. Pour pouvoir les étudier, on utilise une notion de solutions irréductibles basée sur une opération sur les $n$-uplets d'éléments de l'ensemble considéré (voir \cite{C} et la section suivante). Une fois ce concept d'irréductibilité fixé, l'étude de \eqref{a} sur un sous-ensemble de $A$ stable par $+$ tend à se réduire à la bonne connaissance des $\lambda$-quiddités irréductibles sur l'ensemble considéré. Sur ce sujet, on dispose notamment d'une classification complète de ces dernières sur $\mathbb{Z}$ (voir \cite{C} Théorème 3.2), sur $\mathbb{Z}[\alpha]$ (voir \cite{M3} Théorème 2.7), sur $\mathbb{Z}/N\mathbb{Z}$ pour $2 \leq N \leq 6$ (voir \cite{M2} Théorème 2.5 et la section \ref{appli}), ainsi que sur un certain nombre de sous-groupes monogènes de $(\mathbb{C},+)$ (voir \cite{M4}). On a également une construction récursive des solutions de \eqref{a} lorsque les $a_{i}$ sont pris dans $\mathbb{N}^{*}$ (voir \cite{O} Théorèmes 1 et 2). Dans les cas où la classification n'est pas connue et semble hors de portée, on peut également chercher, de façon plus modeste, à connaître des propriétés vérifiées par les $\lambda$-quiddités, comme la présence d'éléments particuliers, le nombre de solutions de \eqref{a} de taille fixée ou encore les tailles possibles des solutions irréductibles.
\\
\\ \indent Afin d'avoir une meilleure connaissance des solutions de \eqref{a}, M.\ Cuntz a posé le problème de classifier les $\lambda$-quiddités irréductibles sur \og certains des sous-ensembles les plus intéressants de $\mathbb{C}$ \fg (voir \cite{C} problème 4.1). Ici, on propose d'élargir ce problème en considérant d'autres types d'ensembles : les produits directs d'anneaux commutatifs unitaires. En effet, en plus de l'intérêt intrinsèque des éléments qui seront développés, il existe de nombreux anneaux qui ne sont pas des produits directs mais qui sont isomorphes à des produits directs d'anneaux et à qui on pourra donc également appliquer les résultats obtenus. Dans ce qui suit, on s'intéressera notamment aux produits d'anneaux faisant intervenir au moins deux anneaux de caractéristique $0$ et à certains produits du type $\mathbb{Z}/n\mathbb{Z} \times \mathbb{Z}/m\mathbb{Z}$. Pour pouvoir faire cela, on commencera, dans la section \ref{RP}, par définir formellement tous les concepts évoqués ci-dessus et par donner les énoncés des résultats principaux de ce texte. Ensuite, dans la section \ref{preuve}, on effectuera les démonstrations de certains de ces théorèmes et on vérifiera que les isomorphismes d'anneaux unitaires conservent les propriétés des $\lambda$-quiddités. Pour terminer, on considérera dans la section \ref{appli} un certain nombre de cas particuliers, notamment ceux évoqués ci-dessus.

\section{Définitions et résultats principaux}
\label{RP}    

Avant d'entrer dans les détails des éléments énoncés ci-dessus, on va présenter dans cette partie les définitions essentielles à l'étude que l'on souhaite mener ainsi que les résultats principaux qui découleront de cette dernière. Dans tout ce qui suit, $(A,+, \times)$ est un anneau commutatif unitaire. Le neutre pour $+$ est $0_{A}$. Quant au neutre pour $\times$, on le note $1_{A}$. Si $n \in \mathbb{N}^{*}$, on note $n_{A}:=\sum_{i=1}^{n} 1_{A}$. Si $I$ est un ensemble et $(A_{i})_{i \in I}$ une famille d'anneaux commutatifs unitaires indexées par $I$, on note $\prod_{i \in I} A_{i}$ le produit direct des $A_{i}$. On munit cet ensemble des deux lois usuelles ci-dessous qui en font un anneau commutatif unitaire:
\begin{itemize}
\item $(a_{i})_{i \in I} + (b_{i})_{i \in I}=(a_{i}+b_{i})_{i \in I}$;
\item $(a_{i})_{i \in I} \times (b_{i})_{i \in I}=(a_{i}b_{i})_{i \in I}$.
\end{itemize}

\noindent Sauf mention contraire, $N$ désigne un entier naturel supérieur à 2. Si $a \in \mathbb{Z}$, on note, s'il n'y pas d'ambiguïté sur $N$, $\overline{a}:=a+N\mathbb{Z}$ la classe de $a$ modulo $N$. L'équation \eqref{a} devient alors :

\begin{equation}
\label{p}
\tag{$E_{N}$}
M_{n}(\overline{a_1},\ldots,\overline{a_n})=\pm Id.
\end{equation} 

\medskip

\noindent On débute cette partie par la définition formelle du concept de $\lambda$-quiddité sur $A$.

\begin{definition}[\cite{C}, définition 2.2]
\label{21}
Soit $n \in \mathbb{N}^{*}$. On dit que le $n$-uplet $(a_{1},\ldots,a_{n})$ d'éléments de $A$ est une $\lambda$-quiddité sur $A$ de taille $n$ si $(a_{1},\ldots,a_{n})$ est une solution de \eqref{a}, c'est-à-dire si $M_{n}(a_{1},\ldots,a_{n})=\pm Id.$ S'il n'y a pas d'ambiguïté on parlera simplement de $\lambda$-quiddité.

\end{definition}

Dans toute la suite, on parlera indistinctement de solution de \eqref{a} ou de $\lambda$-quiddité sur $A$. Par ailleurs, les solutions de \eqref{a} étant invariantes par permutations circulaires, on considérera, si $(a_{1},\ldots,a_{n})$ est une solution de taille $n$ de \eqref{a}, $a_{n+k}=a_{k}$.

\begin{definition}[\cite{C}, lemme 2.7]
\label{22}

Soient $(a_{1},\ldots,a_{n}) \in A^{n}$ et $(b_{1},\ldots,b_{m}) \in A^{m}$. On définit l'opération ci-dessous: \[(a_{1},\ldots,a_{n}) \oplus (b_{1},\ldots,b_{m})= (a_{1}+b_{m},a_{2},\ldots,a_{n-1},a_{n}+b_{1},b_{2},\ldots,b_{m-1}).\] Le $(n+m-2)$-uplet obtenu est appelé la somme de $(a_{1},\ldots,a_{n})$ avec $(b_{1},\ldots,b_{m})$.

\end{definition}

\begin{examples}

{\rm On donne ci-dessous quelques exemples de sommes dans $\mathbb{Z}$ :
\begin{itemize}
\item $(4,1,-1) \oplus (2,0,2,-3)=(1,1,1,0,2)$;
\item $(3,2,0,-1) \oplus (5,1,7,0)=(3,2,0,4,1,7)$;
\item $(3,4,0,2) \oplus (1,0,0,3,2)=(5,4,0,3,0,0,3)$;
\item $n \geq 2$, $(a_{1},\ldots,a_{n}) \oplus (0,0) = (0,0) \oplus (a_{1},\ldots,a_{n})=(a_{1},\ldots,a_{n})$.
\end{itemize}
}
\end{examples}

L'opération $\oplus$ possède la propriété très utile suivante : si $(b_{1},\ldots,b_{m})$ est une solution de \eqref{a} alors la somme $(a_{1},\ldots,a_{n}) \oplus (b_{1},\ldots,b_{m})$ est une solution de \eqref{a} si et seulement si $(a_{1},\ldots,a_{n})$ est une solution de \eqref{a} (voir \cite{C,WZ} et \cite{M2} proposition 3.7). De plus, si $M_{n}(a_{1},\ldots,a_{n})=\alpha Id$ et $M_{m}(b_{1},\ldots,b_{m})=\beta Id$ avec $\alpha, \beta \in \{\pm 1_{A}\}^{2}$, on a $M_{n+m-2}((a_{1},\ldots,a_{n}) \oplus (b_{1},\ldots,b_{m}))=-\alpha \beta Id$. En revanche, $\oplus$ n'est ni commutative ni associative (voir \cite{WZ} exemple 2.1) et les uplets d'éléments de $A$ différents de $(0,0)$ n'ont pas d'inverse pour $\oplus$.

\begin{definition}[\cite{C}, définition 2.5]
\label{23}

Soient $(a_{1},\ldots,a_{n}) \in A^{n}$ et $(b_{1},\ldots,b_{n}) \in A^{n}$. On note $(a_{1},\ldots,a_{n}) \sim (b_{1},\ldots,b_{n})$ si $(b_{1},\ldots,b_{n})$ est obtenu par permutations circulaires de $(a_{1},\ldots,a_{n})$ ou de $(a_{n},\ldots,a_{1})$.

\end{definition}

Une vérification directe permet de s'assurer que les deux propriétés suivantes sont vérifiées. D'une part, $\sim$ forme une relation d'équivalence sur les $n$-uplets d'éléments de $A$ (voir \cite{WZ} lemme 1.7). D'autre part, si $(a_{1},\ldots,a_{n}) \sim (b_{1},\ldots,b_{n})$ alors $(a_{1},\ldots,a_{n})$ est solution de \eqref{a} si et seulement si $(b_{1},\ldots,b_{n})$ l'est aussi (voir \cite{C} proposition 2.6). Plus précisément, si $M_{n}(a_{1},\ldots,a_{n})=\epsilon Id$, avec $\epsilon \in \{\pm 1_{A}\}$, et si $(a_{1},\ldots,a_{n}) \sim (b_{1},\ldots,b_{n})$ alors $M_{n}(b_{1},\ldots,b_{n})=\epsilon Id$. Par ailleurs, on peut préciser la définition ci-dessus. Soit $n \in \mathbb{N}^{*}$, on note $D_{n}$ le groupe diédral de cardinal $2n$. Celui-ci est engendré par deux éléments $s$ et $r$ avec $s$ d'ordre 2 et $r$ d'ordre $n$. $D_{n}$ agit sur $\{1,\ldots,n\}$ de la façon suivante. Soient $k \in \{0,\ldots,n-1\}$ et $i \in \{1,\ldots,n\}$. Il existe un unique $j \in \{1,\ldots,n\}$ tel que $(k+i)+n\mathbb{Z}=j+n\mathbb{Z}$. On pose $r^{k}.i=j$ et $s.i=n-i+1$. À la lueur de ces éléments, on pose, pour $\sigma \in D_{n}$, $(a_{1},\ldots,a_{n})^{\sigma}=(a_{\sigma .1},\ldots,a_{\sigma .n})$ et on a $(a_{1},\ldots,a_{n}) \sim (b_{1},\ldots,b_{n})$ si et seulement s'il existe $\sigma \in D_{n}$ tel que $(a_{1},\ldots,a_{n})^{\sigma}=(b_{1},\ldots,b_{n})$.
\\
\\ \noindent Les deux concepts précédents étant fixés, on peut maintenant définir la notion d'irréductibilité annoncée.

\begin{definition}[\cite{C}, définition 2.9]
\label{24}

Une solution $(c_{1},\ldots,c_{n})$ avec $n \geq 3$ de \eqref{a} est dite réductible s'il existe une solution de \eqref{a} $(b_{1},\ldots,b_{l})$ et un $m$-uplet $(a_{1},\ldots,a_{m})$ d'éléments de $A$ tels que \begin{itemize}
\item $(c_{1},\ldots,c_{n}) \sim (a_{1},\ldots,a_{m}) \oplus (b_{1},\ldots,b_{l})$;
\item $m \geq 3$ et $l \geq 3$.
\end{itemize}
Une solution est dite irréductible si elle n'est pas réductible.

\end{definition}

\begin{remark} 

{\rm $(0_{A},0_{A})$ est systématiquement considérée comme une solution réductible de \eqref{a}.}

\end{remark}

\indent Ainsi, pour chaque anneau, l'étude des solutions de \eqref{a} peut-être ramenée à la bonne connaissance des $\lambda$-quiddités irréductibles. On dispose déjà de nombreuses informations sur ces dernières sur un certain nombre d'anneaux (voir \cite{C, M2, M3}). Ici, on souhaite s'intéresser au cas où l'anneau considéré est un produit direct d'anneaux. Pour cela, on va notamment démontrer le résultat suivant : 

\begin{theorem}
\label{25}

Soient $I$ un ensemble et $(A_{i})_{i \in I}$ une famille d'anneaux commutatifs unitaires indexées par $I$. Soient $n \in \mathbb{N}^{*}$, $B=\prod_{i \in I} A_{i}$ et $((a_{i,1})_{i},\ldots,(a_{i,n})_{i}) \in B^{n}$. 
\\
\\i) $((a_{i,1})_{i},\ldots,(a_{i,n})_{i})$ est une $\lambda$-quiddité si et seulement s'il existe $(\epsilon_{A_{i}})_{i} \in \{-1_{B},1_{B}\}$ tel que pour tout $i \in I$ $M_{n}(a_{i,1},\ldots,a_{i,n})=\epsilon_{A_{i}} Id$.
\\
\\ii) $((a_{i,1})_{i},\ldots,(a_{i,n})_{i})$ est une $\lambda$-quiddité réductible si et seulement si les conditions suivantes sont vérifiées :
\begin{itemize}
\item il existe $3 \leq l \leq n-1$ tel que pour tout $i \in I$ il existe une $\lambda$-quiddité $(b_{i,1},\ldots,b_{i,l})$ sur $A_{i}$;
\item il existe $(\epsilon_{A_{i}})_{i} \in \{-1_{B},1_{B}\}$ tel que pour tout $i \in I$ $M_{l}(b_{i,1},\ldots,b_{i,l})=\epsilon_{A_{i}} Id$;
\item il existe $\sigma \in D_{n}$ tel que pour tout $i \in I$ $(a_{i,1},\ldots,a_{i,n})^{\sigma}=(c_{i,1},\ldots,c_{i,n+2-l}) \oplus (b_{i,1},\ldots,b_{i,l})$ avec $(c_{i,1},\ldots,c_{i,n+2-l}) \in A_{i}^{n+2-l}$.
\end{itemize}

\noindent Si une $\lambda$-quiddité $((a_{i,1})_{i},\ldots,(a_{i,n})_{i})$ sur $\prod_{i \in I} A_{i}$ vérifie les trois conditions précédentes on dira que les $\lambda$-quiddités $(a_{i,1},\ldots,a_{i,n})$ sont simultanément réductibles.

\end{theorem}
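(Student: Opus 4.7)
L'observation centrale est que les lois d'anneau sur $B=\prod_{i\in I}A_{i}$ sont définies composante par composante, et il en va donc de même du produit des matrices à coefficients dans $B$. Ainsi, pour tout $n$-uplet $((a_{i,1})_{i},\ldots,(a_{i,n})_{i})\in B^{n}$, la matrice $M_{n}((a_{i,1})_{i},\ldots,(a_{i,n})_{i})$ est la matrice à coefficients dans $B$ dont chaque coefficient est la famille des coefficients correspondants des matrices $M_{n}(a_{i,1},\ldots,a_{i,n})\in M_{2}(A_{i})$. C'est ce que j'utiliserai de façon essentielle.

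Pour le point i), l'égalité $M_{n}((a_{i,1})_{i},\ldots,(a_{i,n})_{i})=\pm Id$ dans $B$ s'écrit alors comme le fait que la famille de matrices $(M_{n}(a_{i,1},\ldots,a_{i,n}))_{i\in I}$ vaut soit $(Id)_{i\in I}$, soit $(-Id)_{i\in I}$. Puisque $1_{B}=(1_{A_{i}})_{i}$ et $-1_{B}=(-1_{A_{i}})_{i}$, ceci équivaut exactement à l'existence d'un signe \emph{global} $(\epsilon_{A_{i}})_{i}\in\{-1_{B},1_{B}\}$ tel que $M_{n}(a_{i,1},\ldots,a_{i,n})=\epsilon_{A_{i}} Id$ (lu dans $A_{i}$) pour tout $i\in I$, ce qui est la conclusion souhaitée.

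Pour le point ii), je vais déplier la Définition \ref{24} dans $B$ et traduire chaque morceau composante par composante. L'opération $\oplus$ de la Définition \ref{22} n'utilise que des additions et des réindexations, et l'action de $D_{n}$ est purement combinatoire : les deux commutent donc avec les projections sur les facteurs $A_{i}$. Par conséquent, une décomposition $((a_{i,1})_{i},\ldots,(a_{i,n})_{i})^{\sigma}=(c_{1},\ldots,c_{n+2-l})\oplus(b_{1},\ldots,b_{l})$ dans $B$ équivaut, en écrivant $c_{k}=(c_{i,k})_{i}$ et $b_{j}=(b_{i,j})_{i}$, à la famille des décompositions analogues dans chaque $A_{i}$, pour la \emph{même} permutation $\sigma\in D_{n}$. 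Imposer de surcroît que $(b_{1},\ldots,b_{l})$ soit une $\lambda$-quiddité sur $B$ revient, par le point i), à demander que chacune des $(b_{i,1},\ldots,b_{i,l})$ soit une $\lambda$-quiddité sur $A_{i}$, avec un signe global commun. Les contraintes $m\geq 3$ et $l\geq 3$ combinées à $m+l-2=n$ donnent enfin $3\leq l\leq n-1$, et l'on obtient en rassemblant tout cela les trois conditions de l'énoncé.

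La seule subtilité réelle sera de bien retenir que le signe dans i) et la permutation $\sigma$ dans ii) doivent être choisis \emph{uniformément} en $i\in I$, et non individuellement dans chaque facteur; une fois ce point acté, la preuve se ramène à un déroulement soigneux des définitions à travers la description composante par composante de $B$.
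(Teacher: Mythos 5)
Votre démonstration est correcte et suit pour l'essentiel la même démarche que celle de l'article : tout repose sur le fait que les opérations dans $B=\prod_{i\in I}A_{i}$ (et donc le produit matriciel, l'opération $\oplus$ et l'action de $D_{n}$) se calculent composante par composante, l'article explicitant simplement ce point via la formule des continuants pour i), tandis que vous invoquez directement le caractère composante par composante du produit matriciel. Vous identifiez bien la seule subtilité, à savoir l'uniformité en $i$ du signe $\epsilon$ et de la permutation $\sigma$, qui est précisément ce que le théorème met en évidence.
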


\begin{remarks}

{\rm 
i) Si tous les anneaux $A_{i}$, sauf éventuellement un, sont de caractéristique 2, la condition portant sur l'existence d'un $\epsilon$ commun est automatiquement vérifiée.
\\
\\ii) Si $\alpha$ est une bijection de $I$ alors $((a_{i,1})_{i},\ldots,(a_{i,n})_{i})$ est une $\lambda$-quiddité irréductible sur $\prod_{i \in I} A_{i}$ si et seulement si $((a_{\alpha(i),1})_{i},\ldots,(a_{\alpha(i),n})_{i})$ est une $\lambda$-quiddité irréductible sur $\prod_{i \in I} A_{\alpha(i)}$.
}

\end{remarks}

Ce théorème sera démontré dans la section suivante. En particulier, on pourra appliquer ce dernier dans des cas où l'anneau considéré n'est pas un produit direct mais est isomorphe à un produit direct. La vérification formelle du bon comportement des $\lambda$-quiddités par rapport aux isomorphismes d'anneaux unitaires sera également faite dans la section \ref{preuve}. Par ailleurs, on notera par la même occasion que l'existence d'un isomorphisme de groupes entre deux anneaux commutatifs unitaires n'est pas suffisante. Pour illustrer cela, on considérera les deux résultats ci-dessous :

\begin{theorem}
\label{26}

Les $\lambda$-quiddités irréductibles sur $(\mathbb{Z}/2\mathbb{Z}) \times (\mathbb{Z}/2\mathbb{Z})$ sont (à permutations cycliques près) :
\begin{itemize}
\item $((\overline{1},\overline{1}),(\overline{1},\overline{1}),(\overline{1},\overline{1}))$;
\item $((\overline{0},\overline{0}),(\overline{0},\overline{0}),(\overline{0},\overline{0}),(\overline{0},\overline{0})), ((\overline{0},\overline{0}),(\overline{0},\overline{1}),(\overline{0},\overline{0}),(\overline{0},\overline{1})), ((\overline{0},\overline{0}),(\overline{1},\overline{0}),(\overline{0},\overline{0}),(\overline{1},\overline{0})), 
\\((\overline{1},\overline{0}),(\overline{0},\overline{1}),(\overline{1},\overline{0}),(\overline{0},\overline{1}))$;
\item $((\overline{1},\overline{0}),(\overline{1},\overline{0}),(\overline{1},\overline{0}),(\overline{1},\overline{0}),(\overline{1},\overline{0}),(\overline{1},\overline{0})), ((\overline{0},\overline{1}),(\overline{0},\overline{1}),(\overline{0},\overline{1}),(\overline{0},\overline{1}),(\overline{0},\overline{1}),(\overline{0},\overline{1}))$.
\end{itemize}

\end{theorem}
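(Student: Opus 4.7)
My plan is to combine Theorem \ref{25} with the classification of irreducible $\lambda$-quiddités over $\mathbb{Z}/2\mathbb{Z}$ from \cite{M2}, then proceed by case analysis on the size $n$. Since $\mathbb{Z}/2\mathbb{Z}$ is of characteristic $2$, the compatibility condition on $(\epsilon_{A_i})$ in Theorem \ref{25}(i) is automatic (Remarque (i) following Theorem \ref{25}); hence a $\lambda$-quiddité over $(\mathbb{Z}/2\mathbb{Z}) \times (\mathbb{Z}/2\mathbb{Z})$ of size $n$ is simply a pair $(A,B)$ of size-$n$ $\lambda$-quiddités over $\mathbb{Z}/2\mathbb{Z}$. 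By Theorem \ref{25}(ii), the pair is reducible if and only if there exist $\sigma \in D_n$ and a common length $l$ such that both $A^{\sigma}$ and $B^{\sigma}$ admit an $\oplus$-decomposition with a size-$l$ summand on the right (the two size-$l$ $\lambda$-quiddités themselves may differ).

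By \cite{M2}, the irreducibles over $\mathbb{Z}/2\mathbb{Z}$ are (up to $\sim$) $(\overline 1,\overline 1,\overline 1)$ and $(\overline 0,\overline 0,\overline 0,\overline 0)$, so every size-$n$ $\lambda$-quiddité over $\mathbb{Z}/2\mathbb{Z}$ is an iterated $\oplus$-sum of $p$ copies of the first and $q$ copies of the second with $p+2q+2=n$. I would then list, for each $n\in\{3,4,5,6\}$, the $\sim$-orbits together with the set of $(\sigma,l)$ at which each representative admits a reduction, and apply Theorem \ref{25}(ii). The case $n=3$ is immediate. For $n=4$ the three $\lambda$-quiddités over $\mathbb{Z}/2\mathbb{Z}$ produce nine pairs lying in five $D_4$-orbits; direct inspection shows exactly one of them (both components equal to $(\overline 1,\overline 1,\overline 1)\oplus(\overline 1,\overline 1,\overline 1)$ at the same $\sigma$) is simultaneously reducible, leaving the four irreducibles of the statement.

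For $n=5$ all size-$5$ $\lambda$-quiddités lie in the single $\sim$-orbit of $(\overline 1,\overline 1,\overline 1)\oplus(\overline 0,\overline 0,\overline 0,\overline 0)$, and an explicit computation shows each representative admits an $l=3$ reduction at three of the five cyclic positions; since two $3$-element subsets of a $5$-element set must meet, a common $(\sigma,l)=(\sigma,3)$ always exists, so no irreducibles arise. For $n=6$ the decisive observation is that the constant $\lambda$-quiddités $(\overline 1)^6$ and $(\overline 0)^6$ have disjoint reduction menus: $(\overline 1)^6$ admits only $l\in\{3,5\}$ reductions (the last entry of the image must be $\overline 1$ in the $l=3$ case, and the only compatible size-$4$ summand for $l=4$ would need middle $(\overline 1,\overline 1)$, which none of the three size-$4$ solutions has), while $(\overline 0)^6$ admits only $l=4$, via $(\overline 0,\overline 0,\overline 0,\overline 0)\oplus(\overline 0,\overline 0,\overline 0,\overline 0)$. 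This yields the two irreducibles of the statement, and a finite check on the remaining size-$6$ pairs shows each admits a common reduction.

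The main obstacle is to rule out irreducibles for all $n\geq 7$. The plan here is an induction: for $n\geq 7$ every size-$n$ $\lambda$-quiddité over $\mathbb{Z}/2\mathbb{Z}$ decomposes into at least three irreducible summands, so the set of $\sigma\in D_n$ at which an $l\in\{3,4\}$ reduction is available is large enough that a pigeonhole argument forces the corresponding sets for $A$ and $B$ to intersect. The delicate step will be to make this lower bound precise enough to work uniformly in $n$; I expect the needed structural lemma to describe explicitly the reduction-position set of a size-$n$ $\lambda$-quiddité in terms of its $(p,q)$-decomposition. Once that bound is secured, the small cases $n=7,8$ can be handled by direct enumeration and the induction step closes.
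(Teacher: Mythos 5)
Your setup is right: by Theorem \ref{25} and the characteristic-2 remark, the problem is exactly to decide which pairs of $\lambda$-quiddités over $\mathbb{Z}/2\mathbb{Z}$ fail to be simultaneously reducible (with the same $\sigma$ and the same $l$, the two size-$l$ summands being allowed to differ, and the $\epsilon$ condition automatic), and your treatment of $n\leq 6$ is correct — in particular the disjoint-menu observation for $(\overline{1},\ldots,\overline{1})$ versus $(\overline{0},\ldots,\overline{0})$ of size $6$ is exactly the phenomenon producing the two size-6 irreducibles. But your argument for $n\geq 7$ is only a plan, and its central mechanism fails. For every $n$ divisible by $6$ with $n\geq 12$, the pair $A=(\overline{1},\ldots,\overline{1})$, $B=(\overline{0},\ldots,\overline{0})$ consists of two genuine $\lambda$-quiddités, and their $l\in\{3,4\}$ reduction menus are disjoint for reasons independent of $n$: $B$ admits no $l=3$ reduction (the unique size-3 solution is $(\overline{1},\overline{1},\overline{1})$ and $B$ contains no $\overline{1}$), while $A$ admits no $l=4$ reduction (each of the three size-4 solutions $(\overline{0},\overline{0},\overline{0},\overline{0})$, $(\overline{0},\overline{1},\overline{0},\overline{1})$, $(\overline{1},\overline{0},\overline{1},\overline{0})$ carries a $\overline{0}$ among the two entries that survive unchanged into the sum, and every entry of $A$ is $\overline{1}$). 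So no pigeonhole bound on the $(\sigma,l)$-sets with $l\in\{3,4\}$ can force an intersection, however many irreducible summands the solutions have; yet this pair must be reducible for $n\geq 12$, and it is — but only via $l=6$.

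The paper closes precisely this case with the explicit simultaneous $l=6$ reduction $(\overline{1},\ldots,\overline{1})=(\overline{0},\overline{1},\ldots,\overline{1},\overline{0})\oplus(\overline{1},\overline{1},\overline{1},\overline{1},\overline{1},\overline{1})$ and $(\overline{0},\ldots,\overline{0})=(\overline{0},\ldots,\overline{0})\oplus(\overline{0},\overline{0},\overline{0},\overline{0},\overline{0},\overline{0})$, and, more importantly, it avoids the enumeration-plus-pigeonhole strategy altogether: a local propagation argument valid for all $n\geq 5$ shows that if the pair is not simultaneously reducible, then wherever $a_{j}=\overline{1}$ one must have $b_{j}=\overline{0}$ and $a_{j+1}=\overline{1}$ (each alternative yields an explicit common reduction with $l=3$ or $l=4$), so $A$ and $B$ must be the complementary constant pair; the orders of $M_{1}(\overline{1})$ and $M_{1}(\overline{0})$ in $PSL_{2}(\mathbb{Z}/2\mathbb{Z})$ then force $6\mid n$, and the $l=6$ block above eliminates $n\geq 12$, leaving only $n=6$. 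To repair your approach you would need to admit $l$ up to $6$ and to replace the \og at least three irreducible summands \fg{} bound, which is the wrong invariant — the all-ones solution of size $12$ decomposes into many summands yet has an empty $l=4$ menu — by a structural statement strong enough to cover the constant pairs; at that point you would essentially be reproving the paper's local analysis.
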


\begin{theorem}
\label{27}

On considère le corps à 4 éléments :
\[\mathbb{F}_{4}=\frac{\mathbb{Z}/2\mathbb{Z}[X]}{<X^{2}+\overline{1}>}=\{\overline{0},\overline{1},X,X+\overline{1}\}.\]
\noindent Les $\lambda$-quiddités irréductibles sur $\mathbb{F}_{4}$ sont (à permutations cycliques près) :
\begin{itemize}
\item $(\overline{1},\overline{1},\overline{1})$;
\item $(\overline{0},\overline{0},\overline{0},\overline{0})$, $(\overline{0},X,\overline{0},X)$, $(\overline{0},X+\overline{1},\overline{0},X+\overline{1})$;
\item $(X,X,X,X,X)$, $(X+\overline{1},X+\overline{1},X+\overline{1},X+\overline{1},X+\overline{1})$;
\item $(X,X+\overline{1},X,X+\overline{1},X,X+\overline{1})$;
\item $(X,X,X+\overline{1},X+\overline{1},X,X,X+\overline{1},X+\overline{1})$;
\item $(X,X,X+\overline{1},X,X,X+\overline{1},X,X,X+\overline{1})$, $(X+\overline{1},X+\overline{1},X,X+\overline{1},X+\overline{1},X,X+\overline{1},X+\overline{1},X)$.
\end{itemize}

\end{theorem}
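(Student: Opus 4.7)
My plan is to proceed by size $n$ and enumerate all $\lambda$-quiddités over $\mathbb{F}_{4}$ of each size, then sift out the reducible ones. Since $\mathbb{F}_{4}$ has characteristic $2$, the equation \eqref{a} reduces to $M_{n}(a_{1},\ldots,a_{n})=Id$, which removes the sign ambiguity and simplifies the bookkeeping considerably.

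The first step is to verify that each tuple in the statement is indeed a $\lambda$-quiddité by direct matrix multiplication, exploiting the periodic structure of the tuples and the relations $X^{2}=X+\overline{1}$ and $X(X+\overline{1})=\overline{1}$ in $\mathbb{F}_{4}$. Irreducibility of the listed solutions can then be checked case by case: for size $3$ a decomposition is impossible for size reasons; for size $4$, the only possible split is $3+3$, which is ruled out by inspecting the single size-$3$ $\lambda$-quiddité and the definition of $\oplus$; for sizes $5$, $6$, $8$, and $9$, one enumerates the possible splitting points modulo the $D_{n}$-action and checks that none yields a pair of genuine $\lambda$-quiddités over $\mathbb{F}_{4}$ whose sum recovers the given tuple.

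The heart of the proof is completeness: showing that any $\lambda$-quiddité not appearing on the list is reducible. The approach I would take has two complementary components. First, one should bound the size of irreducible solutions, with the target bound $n\leq 9$. The natural tool is a pigeonhole-type lemma: only finitely many short local patterns can appear in a tuple over $\mathbb{F}_{4}$, so in a long enough $\lambda$-quiddité the recurrence of one of these patterns should force a nontrivial decomposition via $\oplus$. Second, for each $n\in\{3,\ldots,9\}$ one enumerates all solutions of $M_{n}=Id$ --- the polynomial nature of the entries of $M_{n}$, together with reduction by the $D_{n}$-action, keeps the enumeration tractable --- and separates the irreducible tuples from those decomposing as a $\oplus$-sum of shorter $\lambda$-quiddités.

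The main obstacle will be the establishment of this explicit upper bound on the size of irreducible $\lambda$-quiddités. The argument must genuinely exploit the field structure of $\mathbb{F}_{4}$ rather than merely its cardinality, since Theorem~\ref{26} shows that the parallel classification over $(\mathbb{Z}/2\mathbb{Z})^{2}$, a ring of the same order, yields a different list with a different maximal irreducible size. Once the bound is secured, the remaining classification reduces to a finite (if tedious) case analysis made tractable by the small cardinality of $\mathbb{F}_{4}$ and the $D_{n}$-symmetry.
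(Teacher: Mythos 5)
Your overall architecture (verify the list, bound the size of irreducibles, enumerate small sizes) matches the paper's, but the step you yourself identify as ``the main obstacle'' --- the bound $n\leq 9$ --- is exactly the technical heart of the proof, and the mechanism you propose for it does not work as stated. A ``pigeonhole-type lemma'' based on the \emph{recurrence} of a short local pattern cannot force a $\oplus$-decomposition: repetition of a pattern gives you nothing, because reducing $(a_{1},\ldots,a_{n})$ by a solution $(b_{1},\ldots,b_{l})$ requires that some consecutive window of $(a_{1},\ldots,a_{n})$ (up to the $D_{n}$-action) coincide with the \emph{interior} $(b_{2},\ldots,b_{l-1})$ of that solution, the two endpoints being absorbed into the complementary summand. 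What must be proved is a covering statement: every sufficiently long word over the relevant alphabet contains, as a factor, the interior of one of the listed irreducible solutions of size $5$ to $9$.

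The paper makes this tractable by two reductions you do not mention: first, Lemma \ref{35} shows any solution of size $\geq 5$ containing $\overline{0}$ or $\overline{1}$ is reducible, so only words over the two-letter alphabet $\{X,X+\overline{1}\}$ remain (this is also where the field structure of $\mathbb{F}_{4}$ enters, via the size-$4$ classification $ab=0\Rightarrow a=0$ or $b=0$, in contrast with $(\mathbb{Z}/2\mathbb{Z})^{2}$); second, Corollary \ref{34} (Frobenius) normalizes $a_{2}=X$. It then runs an exhaustive case analysis on $a_{3},\ldots,a_{9}$ showing that every such window contains the interior of one of $(X,X,X,X,X)$, $(Y,Y,Y,Y,Y)$, $(X,Y,X,Y,X,Y)$, $(X,X,Y,Y,X,X,Y,Y)$, $(X,X,Y,X,X,Y,X,X,Y)$ or its conjugate, where $Y=X+\overline{1}$; since these have size at most $9\leq n-1$, reducibility follows for all $n\geq 10$. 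Your proposal names the right target but supplies neither the reduction to the two-letter alphabet nor the window analysis, so as written the completeness argument is incomplete.
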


\begin{remark}
{\rm On dispose également de formules permettant de connaître le nombre de $\lambda$-quiddités de taille fixée sur $\mathbb{F}_{4}$ (voir \cite{CM}).
}
\end{remark}

Ensuite, on utilisera le théorème \ref{25} pour étudier concrètement plusieurs anneaux produits. En particulier, on effectuera la classification des $\lambda$-quiddités irréductibles pour certains produits d'anneaux du type $\mathbb{Z}/N\mathbb{Z}$. On démontrera également le résultat suivant :

\begin{theorem}
\label{28}

Soient $I$ un ensemble contenant au moins deux éléments et $(A_{i})$ une famille d'anneaux commutatifs unitaires. On suppose qu'au moins deux des anneaux $A_{i}$ sont de caractéristique 0. Soit $n \geq 3$. Il existe une $\lambda$-quiddité irréductible de taille $n$ sur $\prod_{i \in I} A_{i}$.

\end{theorem}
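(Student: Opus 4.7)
The plan is to dispatch $n=3$ directly and, for $n\geq 4$, to assemble on the product ring a $\lambda$-quiddité whose components on two characteristic-$0$ factors carry incompatible decomposition signatures in the sense of Theorem \ref{25} ii).

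For $n=3$ the tuple $((1_{B},1_{B},1_{B}))$, where $1_{B}$ is the unit of $B=\prod_{i\in I}A_{i}$, is a $\lambda$-quiddité: componentwise $M_{3}(1,1,1)=-Id$ in every $A_{i}$, and Theorem \ref{25} i) lifts this to the product. Irreducibility is automatic since Définition \ref{24} requires splitting into two pieces of size $\geq 3$, impossible for a tuple of total size $3$.

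For $n\geq 4$, fix two indices $j_{1}\neq j_{2}\in I$ with $A_{j_{1}},A_{j_{2}}$ of characteristic $0$. On $A_{j_{1}}$ I take the "positive" $\lambda$-quiddité $X^{(1)}:=(1,1,1)^{\oplus(n-2)}$, the iterated $\oplus$-sum of $(1_{A_{j_{1}}},1,1)$; it has size $n$, sign $-Id$, and its entries are those of a fan triangulation, namely $(n-1,1,2,\ldots,2,1)$ up to cyclic shift. On $A_{j_{2}}$ I build $X^{(2)}$ of size $n$ and sign $-Id$ by iterated $\oplus$-sums of sign-$+Id$ pieces chosen from $(-1,-1,-1)$ and $(0,k,0,-k)$ for integer parameters $k$: the rule $\sigma(A\oplus B)=-\sigma(A)\sigma(B)$ makes any even number of sign-$+Id$ factors give a result of sign $-Id$, and the freedom to choose the parameters $k$ arbitrarily large uses the characteristic-$0$ hypothesis on $A_{j_{2}}$. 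On each remaining $A_{i}$ I take $(1_{A_{i}},1,1)^{\oplus(n-2)}$, and I assemble the componentwise tuple $Z\in B^{n}$. By Theorem \ref{25} i), $Z$ is a $\lambda$-quiddité on $\prod_{i\in I}A_{i}$ of size $n$ and sign $-Id$. Suppose $Z$ were reducible. Theorem \ref{25} ii) would supply a common $\sigma\in D_{n}$, a common length $l\in\{3,\ldots,n-1\}$ and a common sign $\epsilon\in\{\pm Id\}$ such that the $j_{1}$- and $j_{2}$-components of $Z^{\sigma}$ both decompose with right piece of size $l$ and sign $\epsilon$. The positivity of $X^{(1)}$ forces the interior entries of the right piece on $A_{j_{1}}$ (which appear as entries of a cyclic shift of $X^{(1)}$) to be strictly positive integers; combined with the fact that $X^{(1)}=(n-1,1,2,\ldots,2,1)$ contains no two consecutive $1$'s and no $0$, a case analysis on the small-size classification of $\lambda$-quiddités on a characteristic-$0$ ring (sign-$+Id$ in size $3$ is only $(-1,-1,-1)$; in size $4$ it requires a $0$ entry; in size $5$ it requires two consecutive $1$'s or a $0$, etc.) rules out every sign-$+Id$ right piece, forcing $\epsilon=-Id$. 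Dually, the construction of $X^{(2)}$ from sign-$+Id$ factors with sufficiently large spike parameters $\pm k$ ensures that every right piece in a decomposition of $X^{(2)}$ has sign $+Id$, forcing $\epsilon=+Id$. The contradiction $\epsilon=-Id=+Id$ gives the irreducibility of $Z$.

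The main obstacle will be the precise verification of the two "sign signature" statements above: every $B$-piece in any decomposition of $X^{(1)}$ has sign $-Id$, and every $B$-piece in any decomposition of $X^{(2)}$ has sign $+Id$. For $X^{(1)}$ the subtle point is that there do exist sign-$+Id$ $\lambda$-quiddités with all positive interior entries, such as $(1-a,1,1,a,0)$ of size $5$ for $a\geq 1$; but these require specific interior patterns (two consecutive $1$'s for this family) that the particular fan-triangulation $X^{(1)}$ does not contain. For $X^{(2)}$ the essential point---and the place where the assumption of two characteristic-$0$ rings is used---is that taking the integer parameters $k$ arbitrarily large constrains the interior entries of any candidate sign-$-Id$ $B$-piece to match a short finite list of forbidden patterns coming from the small-size sign-$-Id$ classification, which by design does not appear as a substring of $X^{(2)}$.
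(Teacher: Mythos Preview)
Your proposal has a genuine gap: neither of the two ``sign signature'' claims is actually established, and the sketches you offer cannot be completed as stated.

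For $X^{(1)}$, your claim (every reducing piece has sign $-Id$) happens to be true, but your justification via ``the small-size classification'' is inadequate: the reducing piece $(b_{1},\ldots,b_{l})$ can have any size $l\in\{3,\ldots,n-1\}$, not only small $l$, and there is no finite classification of sign-$+Id$ $\lambda$-quiddités to check against. (Your specific assertion that a sign-$+Id$ solution of size $5$ ``requires two consecutive $1$'s or a $0$'' is unsupported, and in any case the ``etc.'' hides infinitely many sizes.) The paper handles this correctly by proving a structural lemma (Lemma~\ref{49}): any piece that reduces the fan $(1,n{-}2,1,2,\ldots,2)$ is \emph{exactly} $(k{-}2,1,2,\ldots,2,1)$ or its reverse, for any $k\leq n-1$. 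This is an explicit determination, not a small-size case analysis.

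For $X^{(2)}$ the situation is worse: you never specify $X^{(2)}$ concretely, and the claim that one can choose it so that \emph{all} reductions have sign $+Id$ is not argued. Your hand-wave (``large $k$ constrains the interior entries of any candidate sign-$-Id$ $B$-piece to match a short finite list'') is false as stated: for every size $l\geq 3$ there are infinitely many sign-$-Id$ $\lambda$-quiddités over a characteristic-$0$ ring, with arbitrarily large interior entries, so nothing reduces to a ``short finite list''. Concretely, if for some $n$ you use only $(-1,-1,-1)$-pieces you get $X^{(2)}=-X^{(1)}$, which admits the size-$4$ reducing piece $(-2,-1,-2,-1)$ of sign $-Id$, contradicting your claim; inserting $(0,k,0,-k)$-pieces does not obviously cure this, and you give no mechanism that would.

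The paper avoids sign arguments entirely. It places the \emph{same} fan tuple on both characteristic-$0$ factors, shifted cyclically by one position, and uses Lemma~\ref{49} to pin down the exact shape of any reducing piece; the contradiction then comes from comparing the \emph{positions} of the entries $1$ and $n{-}2$ in the two shifts, not from comparing signs. This positional argument is what makes the proof go through for all $l$ simultaneously.
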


\begin{corollary}
\label{29}

Soit $n \geq 3$. Il existe une $\lambda$-quiddité irréductible de taille $n$ sur $\mathbb{Z} \times \mathbb{Z}$.

\end{corollary}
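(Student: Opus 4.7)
The plan is to obtain the corollary as an immediate specialization of Théorème~\ref{28}. I would take $I=\{1,2\}$ and $A_{1}=A_{2}=\mathbb{Z}$, so that $\prod_{i \in I} A_{i} = \mathbb{Z} \times \mathbb{Z}$. Since $\mathbb{Z}$ has characteristic $0$, both rings in the family are of characteristic $0$, and in particular the hypothesis of Théorème~\ref{28} (that $I$ contains at least two elements and that at least two of the $A_{i}$ are of characteristic $0$) is satisfied.

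Applying Théorème~\ref{28} to this family then furnishes, for each $n \geq 3$, an irreducible $\lambda$-quiddité of size $n$ on $\mathbb{Z} \times \mathbb{Z}$. This is exactly the statement of Corollaire~\ref{29}, so nothing further has to be verified.

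There is in fact no genuine obstacle: all of the work is absorbed into Théorème~\ref{28}, whose proof (presumably carried out in section \ref{preuve}) will need to explicitly construct such $n$-uplets and then invoke Théorème~\ref{25}~ii) to show that no simultaneous reduction of the two coordinates is possible. Once that theorem is established, the corollary is nothing more than a notational instantiation, and no separate argument for $\mathbb{Z} \times \mathbb{Z}$ is required.
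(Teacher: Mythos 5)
Votre démonstration est correcte et coïncide avec celle du papier : le corollaire s'obtient en appliquant directement le théorème~\ref{28} à la famille $A_{1}=A_{2}=\mathbb{Z}$, la caractéristique $0$ de $\mathbb{Z}$ garantissant l'hypothèse requise. Aucune vérification supplémentaire n'est nécessaire.
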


\noindent Ces deux résultats seront prouvés dans la section \ref{appli}.

\section{Démonstration du théorème \ref{25} et utilisation des isomorphismes d'anneaux unitaires}
\label{preuve}

L'objectif de cette section est d'effectuer la preuve du théorème \ref{25} et de regarder en détail la façon dont on peut utiliser les isomorphismes d'anneaux unitaires. Avant de faire cela, on introduit les notations suivantes. On pose $K_{-1}=0_{A}$, $K_{0}=1_{A}$ et on note pour $i \geq 1$ et $(a_{1},\ldots,a_{i}) \in A^{i}$ : \[K_{i}(a_{1},\ldots,a_{i}):=
\left|
\begin{array}{cccccc}
a_1&1_{A}&&&\\[4pt]
1_{A}&a_{2}&1_{A}&&\\[4pt]
&\ddots&\ddots&\!\!\ddots&\\[4pt]
&&1_{A}&a_{i-1}&\!\!\!\!\!1_{A}\\[4pt]
&&&\!\!\!\!\!1_{A}&\!\!\!\!a_{i}
\end{array}
\right|.\] $K_{i}(a_{1},\ldots,a_{i})$ est le continuant de $a_{1},\ldots,a_{i}$. On dispose de l'égalité suivante (voir par exemple \cite {O}): \[M_{n}(a_{1},\ldots,a_{n})=\begin{pmatrix}
    K_{n}(a_{1},\ldots,a_{n}) & -K_{n-1}(a_{2},\ldots,a_{n}) \\
    K_{n-1}(a_{1},\ldots,a_{n-1})  & -K_{n-2}(a_{2},\ldots,a_{n-1}) 
   \end{pmatrix}.\]

\subsection{Démonstration du théorème \ref{25}}

Soient $I$ un ensemble et $(A_{i})_{i \in I}$ une famille d'anneaux commutatifs unitaires indexées par $I$. Soient $n \in \mathbb{N}^{*}$ et $((a_{i,1})_{i},\ldots,(a_{i,n})_{i}) \in (\prod_{i \in I} A_{i})^{n}$. On note $B=\prod_{i \in I} A_{i}$.
\\
\\i) Par définition des opérations $+$ et $\times$ sur $\prod_{i \in I} A_{i}$, on a :
\[M_{n}((a_{i,1})_{i},\ldots,(a_{i,n})_{i})=\begin{pmatrix}
    (K_{n}(a_{i,1},\ldots,a_{i,n}))_{i} & (-K_{n-1}(a_{i,2},\ldots,a_{i,n}))_{i} \\
    (K_{n-1}(a_{i,1},\ldots,a_{i,n-1}))_{i}  & (-K_{n-2}(a_{i,2},\ldots,a_{i,n-1}))_{i} 
   \end{pmatrix}.\] 
	
\noindent Si $((a_{i,1})_{i},\ldots,(a_{i,n})_{i})$ est une $\lambda$-quiddité. Il existe $\epsilon=(\epsilon_{A_{i}})_{i} \in \{\pm 1_{B}\}$ tel que 
\[(K_{n}(a_{i,1},\ldots,a_{i,n}))_{i}=(-K_{n-2}(a_{i,2},\ldots,a_{i,n-1}))_{i}=\epsilon\]
\noindent et 
\[(K_{n-1}(a_{i,1},\ldots,a_{i,n-1}))_{i}=(-K_{n-1}(a_{i,2},\ldots,a_{i,n}))_{i}=0_{B}.\]
\noindent Donc, pour tout $i \in I$, on dispose des égalités : $K_{n}(a_{i,1},\ldots,a_{i,n})=-K_{n-2}(a_{i,2},\ldots,a_{i,n-1})=\epsilon_{A_{i}}$ et $K_{n-1}(a_{i,1},\ldots,a_{i,n-1})=-K_{n-1}(a_{i,2},\ldots,a_{i,n})=0_{A_{i}}$. Ainsi, pour tout $i \in I$, $M_{n}(a_{i,1},\ldots,a_{i,n})=\epsilon_{A_{i}} Id$. 
\\
\\S'il existe $\epsilon=(\epsilon_{A_{i}})_{i} \in \{\pm 1_{B}\}$ tel que pour tout $i \in I$, $M_{n}(a_{i,1},\ldots,a_{i,n})=\epsilon_{A_{i}} Id$. Pour tout $i \in I$, on a $K_{n}(a_{i,1},\ldots,a_{i,n})=-K_{n-2}(a_{i,2},\ldots,a_{i,n-1})=\epsilon_{A_{i}}$, $K_{n-1}(a_{i,1},\ldots,a_{i,n-1})=K_{n-1}(a_{i,2},\ldots,a_{i,n})=0_{A_{i}}$. Donc, puisque ces égalités sont vraies pour tout $i$, $(K_{n}(a_{i,1},\ldots,a_{i,n}))_{i}=(-K_{n-2}(a_{i,2},\ldots,a_{i,n-1}))_{i}=\epsilon$ et $(K_{n-1}(a_{i,1},\ldots,a_{i,n-1}))_{i}=(K_{n-1}(a_{i,2},\ldots,a_{i,n}))_{i}=0_{B}$ et $M_{n}((a_{i,1})_{i},\ldots,(a_{i,n})_{i})=\epsilon Id$.
\\
\\ii) Si $((a_{i,1})_{i},\ldots,(a_{i,n})_{i})$ est une $\lambda$-quiddité réductible. Il existe $3 \leq l \leq n-1$, $\sigma \in D_{n}$, $((b_{i,1})_{i},\ldots,(b_{i,l})_{i})$ une $\lambda$-quiddité sur $(\prod_{i \in I} A_{i})$ et $((c_{i,1})_{i},\ldots,(c_{i,n+2-l})_{i}) \in (\prod_{i \in I} A_{i})^{n+2-l}$ tels que 
\[((a_{i,1})_{i},\ldots,(a_{i,n})_{i})^{\sigma}=((c_{i,1})_{i},\ldots,(c_{i,n+2-l})_{i}) \oplus ((b_{i,1})_{i},\ldots,(b_{i,l})_{i}).\] 

\noindent La relation ci-dessus donne pour tout $i \in I$ $(a_{i,1},\ldots,a_{i,n})^{\sigma}=(c_{i,1},\ldots,c_{i,n+2-l}) \oplus (b_{i,1},\ldots,b_{i,l})$. De plus, par i), il existe $(\epsilon_{A_{i}})_{i} \in \{\pm 1_{B}\}$ tel que pour tout $i \in I$ $M_{l}(b_{i,1},\ldots,b_{i,l})=\epsilon_{A_{i}} Id$. 
\\
\\On suppose maintenant que les condition suivantes sont vérifiées :
\begin{itemize}
\item il existe $3 \leq l \leq n-1$ tel que pour tout $i \in I$ il existe une $\lambda$-quiddité $(b_{i,1},\ldots,b_{i,l})$ sur $A_{i}$;
\item il existe $\epsilon=(\epsilon_{A_{i}})_{i} \in \{\pm 1_{B}\}$ tel que pour tout $i \in I$ $M_{l}(b_{i,1},\ldots,b_{i,l})=\epsilon_{A_{i}} Id$;
\item il existe $\sigma \in D_{n}$ tel que pour tout $i \in I$ $(a_{i,1},\ldots,a_{i,n})^{\sigma}=(c_{i,1},\ldots,c_{i,n+2-l}) \oplus (b_{i,1},\ldots,b_{i,l})$ avec $(c_{i,1},\ldots,c_{i,n+2-l}) \in A_{i}^{n+2-l}$.
\end{itemize}

\noindent La relation ci-dessus donne $((a_{i,1})_{i},\ldots,(a_{i,n})_{i})^{\sigma}=((c_{i,1})_{i},\ldots,(c_{i,n+2-l})_{i}) \oplus ((b_{i,1})_{i},\ldots,(b_{i,l})_{i})$. De plus, par i), $((b_{i,1})_{i},\ldots,(b_{i,l})_{i})$ est une $\lambda$-quiddité sur $(\prod_{i \in I} A_{i})$ de taille $3 \leq l \leq n-1$. Donc, $((a_{i,1})_{i},\ldots,(a_{i,n})_{i})$ est réductible.

\qed

\subsection{Morphismes entre anneaux unitaires}

L'ingrédient principal de cette sous-partie est la proposition ci-dessous :

\begin{proposition}
\label{31}

Soient $A$ et $B$ deux anneaux commutatifs unitaires et $f:A\longrightarrow B$ un morphisme d'anneaux unitaires.
\\
\\i) Soient $n \in \mathbb{N}^{*}$ et $(a_{1},\ldots,a_{n}) \in A^{n}$ une $\lambda$-quiddité sur $A$. $(f(a_{1}),\ldots,f(a_{n})) \in B^{n}$ est une $\lambda$-quiddité sur $B$.
\\
\\ii) De plus, si $(f(a_{1}),\ldots,f(a_{n}))$ est une $\lambda$-quiddité irréductible sur $B$ alors $(a_{1},\ldots,a_{n})$ est une $\lambda$-quiddité irréductible sur $A$

\end{proposition}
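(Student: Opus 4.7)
Mon plan repose sur l'observation que $f$ s'étend de façon naturelle à toutes les structures intervenant dans la définition d'une $\lambda$-quiddité (matrices $2 \times 2$, sommes $\oplus$, action diédrale). Pour le point (i), je commencerais par remarquer que $f$ induit un morphisme d'anneaux unitaires $\tilde{f} : M_{2}(A) \longrightarrow M_{2}(B)$ obtenu en appliquant $f$ coefficient par coefficient : c'est bien un morphisme car la multiplication matricielle ne fait intervenir que les opérations $+$ et $\times$, respectées par $f$. Comme $f(0_{A}) = 0_{B}$, $f(1_{A}) = 1_{B}$ et $f(-1_{A}) = -1_{B}$, l'image par $\tilde{f}$ de chaque facteur du produit définissant $M_{n}(a_{1},\ldots,a_{n})$ est le facteur correspondant construit sur $f(a_{k})$ ; par multiplicativité, on obtient $\tilde{f}(M_{n}(a_{1},\ldots,a_{n})) = M_{n}(f(a_{1}),\ldots,f(a_{n}))$. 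Si $M_{n}(a_{1},\ldots,a_{n}) = \epsilon Id$ avec $\epsilon \in \{\pm 1_{A}\}$, il vient immédiatement $M_{n}(f(a_{1}),\ldots,f(a_{n})) = f(\epsilon) Id = \pm Id$, ce qui démontre (i).

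Pour le point (ii), je procéderais par contraposée : on suppose $(a_{1},\ldots,a_{n})$ réductible sur $A$ (c'est déjà une $\lambda$-quiddité par hypothèse implicite du point (ii)), et on veut en déduire la réductibilité de $(f(a_{1}),\ldots,f(a_{n}))$ sur $B$. Il existe alors $\sigma \in D_{n}$, une $\lambda$-quiddité $(b_{1},\ldots,b_{l})$ sur $A$ et un $m$-uplet $(c_{1},\ldots,c_{m}) \in A^{m}$, avec $l, m \geq 3$, tels que $(a_{1},\ldots,a_{n})^{\sigma} = (c_{1},\ldots,c_{m}) \oplus (b_{1},\ldots,b_{l})$. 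Il suffit alors de vérifier deux compatibilités très simples : d'une part, l'action diédrale étant une simple réindexation, $(f(a_{1}),\ldots,f(a_{n}))^{\sigma} = (f(a_{\sigma .1}),\ldots,f(a_{\sigma .n}))$ ; d'autre part, l'opération $\oplus$ ne faisant intervenir que des sommes, $f$ appliqué composante par composante commute avec $\oplus$. En appliquant ces deux observations à l'égalité précédente, on obtient $(f(a_{1}),\ldots,f(a_{n}))^{\sigma} = (f(c_{1}),\ldots,f(c_{m})) \oplus (f(b_{1}),\ldots,f(b_{l}))$, où $(f(b_{1}),\ldots,f(b_{l}))$ est une $\lambda$-quiddité sur $B$ par le point (i). Les tailles $l$ et $m$ étant inchangées, ceci fournit bien une décomposition non triviale de $(f(a_{1}),\ldots,f(a_{n}))$.

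L'essentiel du travail se réduit donc à des vérifications de compatibilité élémentaires du morphisme $f$ avec les opérations mises en jeu, et aucune étape ne devrait présenter de véritable obstacle ; le seul point méritant un peu d'attention est la justification rigoureuse que l'extension entrywise $\tilde{f}$ est effectivement multiplicative. On notera en revanche que la réciproque de (ii) est fausse en général : un morphisme non injectif peut envoyer une $\lambda$-quiddité irréductible sur une $\lambda$-quiddité réductible, car des éléments distincts de $A$ peuvent avoir même image dans $B$ et faire apparaître une décomposition qui n'existait pas au départ. C'est précisément pour cela qu'il faudra, dans la suite du texte, se restreindre à des isomorphismes pour préserver exactement la notion d'irréductibilité.
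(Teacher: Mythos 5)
Votre démonstration est correcte et suit pour l'essentiel la même démarche que celle de l'article : pour le point (ii), l'argument par contraposée est identique (compatibilité de $f$ avec la relation $\sim$ et avec l'opération $\oplus$, puis application du point (i) au facteur qui doit être une $\lambda$-quiddité). Pour le point (i), vous appliquez le morphisme entrée par entrée au produit des matrices élémentaires, là où l'article passe par l'écriture de $M_{n}$ à l'aide des continuants vus comme polynômes à coefficients entiers ; les deux variantes sont équivalentes et également valables.
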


\begin{proof}

i) Soit $(a_{1},\ldots,a_{n}) \in A^{n}$ une $\lambda$-quiddité sur l'anneau $A$. Il existe $\epsilon \in \{1_{A}, -1_{A}\}$ tel que $M_{n}(a_{1},\ldots,a_{n})=\epsilon Id$. De plus, $K_{n}(a_{1},\ldots,a_{n})$ est un polynôme à  $n$ variables à coefficients entiers évalué en $(a_{1},\ldots,a_{n})$. Comme $f$ est un morphisme d'anneaux unitaires, on a les égalités suivantes : $K_{n}(f(a_{1}),\ldots,f(a_{n}))=f(K_{n}(a_{1},\ldots,a_{n}))$, $f(0_{A})=0_{B}$ et $f(\epsilon)=\omega \in \{-1_{B}, 1_{B}\}$. Ainsi,
\begin{eqnarray*}
M_{n}(f(a_{1}),\ldots,f(a_{n})) &=& \begin{pmatrix}
    K_{n}(f(a_{1}),\ldots,f(a_{n})) & -K_{n-1}(f(a_{2}),\ldots,f(a_{n})) \\
    K_{n-1}(f(a_{1}),\ldots,f(a_{n-1}))  & -K_{n-2}(f(a_{2}),\ldots,f(a_{n-1})) 
   \end{pmatrix} \\
	                              &=& \begin{pmatrix}
    f(K_{n}(a_{1},\ldots,a_{n})) & f(-K_{n-1}(a_{2},\ldots,a_{n})) \\
    f(K_{n-1}(a_{1},\ldots,a_{n-1}))  & f(-K_{n-2}(a_{2},\ldots,a_{n-1})) 
   \end{pmatrix} \\
	                              &=& \begin{pmatrix}
    f(\epsilon) & f(0_{A}) \\
    f(0_{A})  & f(\epsilon) 
   \end{pmatrix} \\
	                              &=& \omega Id.\\
\end{eqnarray*}

\noindent ii) On raisonne par contraposée. Si $(a_{1},\ldots,a_{n})$ est une $\lambda$-quiddité réductible sur $A$. Il existe $l, l' \geq 3$ et deux $\lambda$-quiddités $(b_{1},\ldots,b_{l}) \in A^{l}$ et $(c_{1},\ldots,c_{l'}) \in A^{l'}$ tels que :
\[(a_{1},\ldots,a_{n}) \sim (b_{1},\ldots,b_{l}) \oplus (c_{1},\ldots,c_{l'})=(b_{1}+c_{l'},b_{2},\ldots,b_{l-1},b_{l}+c_{1},c_{2},\ldots,c_{l'-1}).\]

\noindent On a :
\begin{eqnarray*}
(f(a_{1}),\ldots,f(a_{n})) &\sim& (f(b_{1}+c_{l'}),f(b_{2}),\ldots,f(b_{l-1}),f(b_{l}+c_{1}),f(c_{2}),\ldots,f(c_{l'-1})) \\
                           &=& (f(b_{1})+f(c_{l'}),f(b_{2}),\ldots,f(b_{l-1}),f(b_{l})+f(c_{1}),f(c_{2}),\ldots,f(c_{l'-1})) \\
										       &=& (f(b_{1}),\ldots,f(b_{l})) \oplus (f(c_{1}),\ldots,f(c_{l'})).
\end{eqnarray*}

\noindent Or, par i), $(f(c_{1}),\ldots,f(c_{l'}))$ est une $\lambda$-quiddité sur $B$. Ainsi, $(f(a_{1}),\ldots,f(a_{n}))$ est une $\lambda$-quiddité réductible sur $B$.

\end{proof}

\noindent Grâce à ce résultat, on peut obtenir facilement les deux résultats suivants :

\begin{corollary}
\label{32}

Soit $A$ un anneau commutatif unitaire de caractéristique $p$, avec $p$ premier. Si $(a_{1},\ldots,a_{n})$ est une $\lambda$-quiddité sur $A$ alors $(a_{1}^{p},\ldots,a_{n}^{p})$ est une $\lambda$-quiddité sur $A$.

\end{corollary}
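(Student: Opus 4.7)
Le plan est d'appliquer directement la proposition \ref{31} i) au morphisme de Frobenius. En effet, puisque $A$ est un anneau commutatif unitaire de caractéristique $p$ première, l'application $F : A \longrightarrow A$ définie par $F(x) = x^{p}$ est un endomorphisme d'anneaux unitaires. Dès que ce fait est établi, l'image de la $\lambda$-quiddité $(a_{1},\ldots,a_{n})$ par $F$, qui est exactement $(a_{1}^{p},\ldots,a_{n}^{p})$, est à nouveau une $\lambda$-quiddité sur $A$ par la proposition \ref{31} i).

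Il reste donc essentiellement à justifier que $F$ est un morphisme d'anneaux unitaires. L'égalité $F(1_{A}) = 1_{A}$ est immédiate, et la multiplicativité $F(xy) = F(x)F(y) = x^{p}y^{p}$ découle de la commutativité de $A$. Le seul point qui demande une vérification est l'additivité : en utilisant la formule du binôme de Newton, qui est valide dans tout anneau commutatif, on a $(x+y)^{p} = \sum_{k=0}^{p} \binom{p}{k} x^{k} y^{p-k}$, et l'on conclut en remarquant que pour $0 < k < p$, l'entier $\binom{p}{k}$ est divisible par $p$, si bien que le terme correspondant $\binom{p}{k}_{A} x^{k} y^{p-k}$ s'annule grâce à l'hypothèse sur la caractéristique de $A$.

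Le point le plus délicat, si l'on peut en citer un, est la vérification de l'additivité de $F$ via l'annulation des coefficients binomiaux intermédiaires, mais cela ne pose aucune difficulté réelle puisqu'il s'agit d'un argument classique. Une fois ces trois propriétés acquises, l'énoncé du corollaire est une application directe et sans calcul supplémentaire de la proposition \ref{31} i) appliquée à $f = F$.
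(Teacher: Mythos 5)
Votre démonstration est correcte et suit exactement la même démarche que celle de l'article, à savoir appliquer la proposition \ref{31} i) au morphisme de Frobenius ; vous ajoutez simplement la vérification (classique et juste) que $x \mapsto x^{p}$ est bien un morphisme d'anneaux unitaires en caractéristique $p$ première. Rien à redire.
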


\begin{proof}

Il suffit d'appliquer le résultat précédent au morphisme de Frobenius.

\end{proof}

\begin{corollary}
\label{33}

Soient $A$ et $B$ deux anneaux commutatifs unitaires. On suppose qu'il existe un isomorphisme d'anneaux unitaires $f:A\longrightarrow B$. Soient $n \in \mathbb{N}^{*}$ et $(a_{1},\ldots,a_{n}) \in A^{n}$.
\\
\\i) $(a_{1},\ldots,a_{n}) \in A^{n}$ est une $\lambda$-quiddité sur $A$ si et seulement si $(f(a_{1}),\ldots,f(a_{n})) \in B^{n}$ est une $\lambda$-quiddité sur $B$.
\\
\\ii) De plus, $(a_{1},\ldots,a_{n})$ est une $\lambda$-quiddité irréductible sur $A$ si et seulement si $(f(a_{1}),\ldots,f(a_{n}))$ est une $\lambda$-quiddité irréductible sur $B$.

\end{corollary}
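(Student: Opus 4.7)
The plan is to derive Corollary \ref{33} as a direct consequence of Proposition \ref{31} applied to both $f$ and $f^{-1}$. The key preliminary observation is that since $f: A \longrightarrow B$ is an isomorphism of unitary commutative rings, the inverse map $f^{-1}: B \longrightarrow A$ exists and is itself a morphism of unitary commutative rings. This is a standard fact: $f^{-1}$ is additive and multiplicative because $f$ is, and sends $1_B$ to $1_A$ because $f(1_A) = 1_B$. So Proposition \ref{31} can be applied in both directions.

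For part i), one implication is exactly Proposition \ref{31} i) applied to $f$. For the reverse, suppose $(f(a_1), \ldots, f(a_n))$ is a $\lambda$-quiddité on $B$. Apply Proposition \ref{31} i) now to the morphism $f^{-1}$: the tuple $(f^{-1}(f(a_1)), \ldots, f^{-1}(f(a_n)))$ is a $\lambda$-quiddité on $A$. Since $f^{-1} \circ f = \mathrm{Id}_A$, this tuple is precisely $(a_1, \ldots, a_n)$, which yields the desired conclusion.

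For part ii), one implication is Proposition \ref{31} ii) applied to $f$: if $(f(a_1), \ldots, f(a_n))$ is irreducible on $B$ then $(a_1, \ldots, a_n)$ is irreducible on $A$. For the converse, apply Proposition \ref{31} ii) to $f^{-1}: B \longrightarrow A$ evaluated at the tuple $(f(a_1), \ldots, f(a_n)) \in B^n$: if $(f^{-1}(f(a_1)), \ldots, f^{-1}(f(a_n))) = (a_1, \ldots, a_n)$ is irreducible on $A$, then $(f(a_1), \ldots, f(a_n))$ is irreducible on $B$. Combining both directions gives the equivalence.

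There is no real obstacle here; the whole argument is an exercise in symmetry using that $f^{-1}$ is again a unitary ring morphism, which lets one apply Proposition \ref{31} in the reverse direction and recover the missing implications for free.
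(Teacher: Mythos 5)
Your proof is correct and follows exactly the paper's own argument: Corollary \ref{33} is obtained by applying Proposition \ref{31} to both $f$ and $f^{-1}$, using that the inverse of a unitary ring isomorphism is again a unitary ring morphism and that $f^{-1}\circ f=\mathrm{Id}_A$. Nothing to add.
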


\begin{proof}

La partie i) découle de la proposition \ref{31} appliquée à $f$ et $f^{-1}$. De même, on a que si $(f(a_{1}),\ldots,f(a_{n}))$ est une $\lambda$-quiddité irréductible sur $B$ alors $(a_{1},\ldots,a_{n})$ est une $\lambda$-quiddité irréductible sur $A$ et si $(a_{1},\ldots,a_{n})=(f^{-1}(f(a_{1})),\ldots,f^{-1}(f(a_{n})))$ est une $\lambda$-quiddité irréductible sur $A$ alors $(f(a_{1}),\ldots,f(a_{n}))$ est une $\lambda$-quiddité irréductible sur $B$.

\end{proof}

\begin{corollary}
\label{34}

Soit $\mathbb{F}$ un corps fini de caractéristique $p$, avec $p$ premier. $(a_{1},\ldots,a_{n})$ est une $\lambda$-quiddité irréductible sur $\mathbb{F}$ si et seulement si $(a_{1}^{p},\ldots,a_{n}^{p})$ l'est aussi.

\end{corollary}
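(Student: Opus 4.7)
Le plan est d'appliquer le corollaire \ref{33} au morphisme de Frobenius $\phi:\mathbb{F}\longrightarrow \mathbb{F}$ défini par $\phi(x)=x^{p}$. Comme $\mathbb{F}$ est de caractéristique $p$, avec $p$ premier, la formule du binôme donne immédiatement que $\phi$ est un morphisme d'anneaux unitaires (il s'agit d'ailleurs exactement du morphisme déjà utilisé dans la preuve du corollaire \ref{32}).

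L'ingrédient qui différencie cet énoncé du corollaire \ref{32} est la bijectivité de $\phi$. Celle-ci s'obtient en deux temps : d'abord, $\mathbb{F}$ étant un corps et $\phi$ étant non nul (car $\phi(1_{\mathbb{F}})=1_{\mathbb{F}}$), son noyau est nécessairement l'idéal trivial $\{0_{\mathbb{F}}\}$, donc $\phi$ est injectif ; ensuite, $\mathbb{F}$ étant fini, l'injectivité de $\phi$ équivaut à sa surjectivité. Donc $\phi$ est un automorphisme d'anneaux unitaires de $\mathbb{F}$ sur lui-même.

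Il suffit alors d'appliquer le corollaire \ref{33} ii) à $\phi$ : pour tout $(a_{1},\ldots,a_{n}) \in \mathbb{F}^{n}$, le $n$-uplet $(a_{1},\ldots,a_{n})$ est une $\lambda$-quiddité irréductible sur $\mathbb{F}$ si et seulement si $(\phi(a_{1}),\ldots,\phi(a_{n}))=(a_{1}^{p},\ldots,a_{n}^{p})$ l'est aussi. Il n'y a pas d'obstacle véritable dans cette preuve : le seul point à surveiller est que l'hypothèse de finitude de $\mathbb{F}$ est essentielle, car c'est précisément elle qui permet de passer du corollaire \ref{32} (une implication seule, valable pour tout anneau commutatif unitaire de caractéristique $p$) à une équivalence via la bijectivité de Frobenius.
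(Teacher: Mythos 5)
Votre preuve est correcte et suit exactement la même démarche que celle de l'article : appliquer le corollaire \ref{33} au morphisme de Frobenius, qui est un automorphisme puisque $\mathbb{F}$ est un corps fini. Vous explicitez simplement l'argument d'injectivité-surjectivité que l'article laisse implicite.
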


\begin{proof}

Il suffit d'appliquer le résultat précédent au morphisme de Frobenius qui est un automorphisme dans le cas des corps finis.

\end{proof}

\begin{remark}
{\rm La proposition \ref{31} permet également de retrouver le résultat assez immédiat suivant : si $A$ et $B$ sont deux anneaux commutatifs unitaires vérifiant $A \subset B$ alors une $\lambda$-quiddité sur $A$ irréductible sur $B$ est irréductible sur $A$. Pour montrer cela avec la proposition \ref{31}, il suffit de considérer l'injection canonique.
}
\end{remark}

Le corollaire \ref{33} permet donc de transférer la classification des $\lambda$-quiddités irréductibles sur un anneau commutatif unitaire à d'autres anneaux via un isomorphisme d'anneaux unitaires. Toutefois, comme indiqué dans la section précédente, l'existence d'un isomorphisme de groupes n'est pas suffisante. Pour cela, on va démontrer les théorèmes \ref{26} et \ref{27}. 
\\
\\ \indent Avant de d'effectuer les preuves en détail, on a besoin de quelques informations sur les solutions de \eqref{a} pour les petites valeurs de $n$ (voir par exemple \cite{M2} section 3.1) :

\begin{lemma}
\label{35}

\begin{itemize}
\item \eqref{a} n'a pas de solution de taille 1.
\item $(0_{A},0_{A})$ est l'unique solution de \eqref{a} de taille 2.
\item $(1_{A},1_{A},1_{A})$ et $(-1_{A},-1_{A},-1_{A})$ sont les seules solutions de \eqref{a} de taille 3 et elles sont irréductibles.
\item Les solutions de \eqref{a} de taille 4 sont de la forme $(-a,b,a,-b)$ avec $ab=0$ et $(a,b,a,b)$ avec $ab=2_{A}$. 
\item Les solutions de \eqref{a} de taille supérieure à 4 contenant $\pm 1_{A}$ sont réductibles.
\item Une solution de \eqref{a} de taille 4 est irréductible si et seulement si elle ne contient pas $\pm 1_{A}$.
\item Les solutions de \eqref{a} de taille supérieure à 5 contenant $0_{A}$ sont réductibles.
\end{itemize}

\end{lemma}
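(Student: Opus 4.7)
I would split the seven bullets into two families: the first four (non-existence in size $1$ and classifications in sizes $2$, $3$, $4$) reduce to a direct matrix computation using the continuant expression of $M_n$ recalled above, while the last three (reducibility criteria) reduce to producing an explicit decomposition via $\oplus$ into two solutions of size at least $3$. Irreducibility of size-$3$ solutions will be automatic from the definition, since any reducible decomposition involves two summands of respective sizes $l,l'\geq 3$ and therefore total size $l+l'-2\geq 4$.

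For sizes $1$ and $2$, $M_n$ has four explicit coefficients and the equation $M_n=\pm Id$ yields an elementary system: the $(1,2)$-entry of $M_1$ is $-1_A\neq 0_A$, and in size $2$ one reads off $a=b=0_A$. For size $3$ the $(2,2)$-entry forces $b=\mp 1_A$, then $K_2(a,b)=K_2(b,c)=0_A$ force $a=c=\mp 1_A$. For size $4$ I would expand the four continuant equations coming from $M_4=\pm Id$: the equation $K_2(b,c)=bc-1_A=\mp 1_A$ splits into the two cases $bc=0_A$ and $bc=2_A$; then $K_3(a,b,c)=a(bc-1_A)-c=0_A$ gives $c=\mp a$ and symmetrically $K_3(b,c,d)=0_A$ gives $d=\mp b$, and substituting back into $K_4=\pm 1_A$ yields the remaining constraint $ab=0_A$ or $ab=2_A$.

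For the reducibility statements I would use two universal $\oplus$-decompositions. If $(a_1,\ldots,a_n)$ contains $\epsilon=\pm 1_A$, a cyclic rotation places $\epsilon$ at position $n$ and a direct computation gives
\[(a_1-\epsilon,\,a_2,\ldots,a_{n-2},\,a_{n-1}-\epsilon)\oplus(\epsilon,\epsilon,\epsilon)=(a_1,\ldots,a_{n-1},\epsilon),\]
which is a valid reduction as soon as the left-hand factor has size $n-1\geq 3$, i.e.\ $n\geq 4$; this handles simultaneously the size-$4$ case and the size-$\geq 5$ case. If instead $(a_1,\ldots,a_n)$ contains $0_A$ and $n\geq 5$, I would place the $0$ at position $n$ and exploit the fact that $(0_A,a_{n-1},0_A,-a_{n-1})$ is a size-$4$ solution (the $ab=0_A$ family of the previous bullet) to write
\[(a_1+a_{n-1},\,a_2,\ldots,a_{n-2})\oplus(0_A,a_{n-1},0_A,-a_{n-1})=(a_1,\ldots,a_{n-1},0_A),\]
the first factor being of size $n-2\geq 3$ and automatically a $\lambda$-quiddité by the $\oplus$-cancellation property recalled after Definition~\ref{22}.

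For the converse direction of the size-$4$ irreducibility criterion, any reducible tuple of size $4$ must be a sum of two summands of size at least $3$, hence both of size exactly $3$, hence of the form $(\epsilon_1,\epsilon_1,\epsilon_1)\oplus(\epsilon_2,\epsilon_2,\epsilon_2)=(\epsilon_1+\epsilon_2,\epsilon_1,\epsilon_1+\epsilon_2,\epsilon_2)$, which visibly contains $\epsilon_1=\pm 1_A$. The only genuine difficulty I anticipate is essentially bookkeeping: choosing the right cyclic rotation in each case and being careful that both $\oplus$-summands reach size $3$, which is exactly what pins down the thresholds $n\geq 4$ and $n\geq 5$ in the statements.
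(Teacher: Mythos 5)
Your proposal is correct, but note that the paper itself gives no proof of this lemma: it is stated with a pointer to \cite{M2}, section 3.1, so what you have written is a self-contained reconstruction rather than a variant of an argument in the text. All the key steps check out against the continuant formula $M_{n}(a_{1},\ldots,a_{n})=\left(\begin{smallmatrix}K_{n}(a_{1},\ldots,a_{n}) & -K_{n-1}(a_{2},\ldots,a_{n})\\ K_{n-1}(a_{1},\ldots,a_{n-1}) & -K_{n-2}(a_{2},\ldots,a_{n-1})\end{smallmatrix}\right)$ and Définition~\ref{22}: the $(2,2)$-entry arguments for sizes $2$, $3$, $4$ are exactly right (with $M_{n}=\epsilon Id$ one gets $K_{2}(b,c)=-\epsilon$, hence $bc=0_{A}$ or $2_{A}$, then $c=-\epsilon a$, $d=-\epsilon b$, and the $K_{4}$ equation is automatically satisfied — so strictly speaking $ab=0_{A}$ or $2_{A}$ already follows from $bc=0_{A}$ or $2_{A}$ and $c=-\epsilon a$ rather than from "substituting back", a harmless imprecision); the identity $(a_{1}-\epsilon,a_{2},\ldots,a_{n-1}-\epsilon)\oplus(\epsilon,\epsilon,\epsilon)=(a_{1},\ldots,a_{n-1},\epsilon)$ and the identity $(a_{1}+a_{n-1},a_{2},\ldots,a_{n-2})\oplus(0_{A},a_{n-1},0_{A},-a_{n-1})=(a_{1},\ldots,a_{n-1},0_{A})$ both expand correctly under Définition~\ref{22}, with the size thresholds $n\geq 4$ and $n\geq 5$ coming out exactly as you say. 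Two small remarks: by Définition~\ref{24} only the \emph{second} $\oplus$-summand is required to be a $\lambda$-quiddité, so your appeal to the cancellation property for the left factor is not even needed to certify reducibility; and in the converse direction for size $4$ you do need that cancellation property to conclude that the left size-$3$ factor is itself a solution, hence equal to $(\pm 1_{A},\pm 1_{A},\pm 1_{A})$ — you invoke it correctly. The degenerate ring $1_{A}=0_{A}$ is tacitly excluded throughout (otherwise size $1$ would have a solution), consistent with the paper's conventions.
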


\begin{proof}[Démonstration du théorème \ref{26}]

À la lueur du théorème \ref{25}, on voit qu'il suffit de trouver les couples de solutions de $(E_{2})$ qui ne sont pas simultanément réductibles. En utilisant le théorème \ref{25} et le lemme \ref{35}, on constate que les solutions de \eqref{a} irréductibles de taille 3 et 4 sont celles données dans l'énoncé.
\\
\\Soient $n \geq 5$, $(\overline{a_{1}},\ldots,\overline{a_{n}})$ et $(\overline{b_{1}},\ldots,\overline{b_{n}})$ deux $\lambda$-quiddités sur $\mathbb{Z}/2\mathbb{Z}$. On suppose qu'elles ne sont pas simultanément réductibles.
\\
\\On suppose pour commencer qu'il existe $1 \leq j \leq n$ tel que $\overline{a_{j}}=\overline{1}$. Si $\overline{b_{j}}=\overline{1}$ alors les deux solutions sont simultanément réductibles, puisqu'on a :
\[\left\{
    \begin{array}{ll}
        (\overline{a_{j+1}},\ldots,\overline{a_{n}},\overline{a_{1}},\ldots,\overline{a_{j}})=(\overline{a_{j+1}+1},\ldots,\overline{a_{n}},\overline{a_{1}},\ldots,\overline{a_{j-1}+1}) \oplus (\overline{1},\overline{1},\overline{1}); \\
        (\overline{b_{j+1}},\ldots,\overline{b_{n}},\overline{b_{1}},\ldots,\overline{b_{j}})=(\overline{b_{j+1}+1},\ldots,\overline{b_{n}},\overline{b_{1}},\ldots,\overline{b_{j-1}+1}) \oplus (\overline{1},\overline{1},\overline{1}).
    \end{array}
\right. \\ \]

\noindent Donc, $\overline{b_{j}}=\overline{0}$. Si $\overline{a_{j+1}}=\overline{0}$ les deux solutions sont simultanément réductibles. En effet, on a
\[\left\{
    \begin{array}{ll}
        (\overline{a_{j+2}},\ldots,\overline{a_{n}},\overline{a_{1}},\ldots,\overline{a_{j}},\overline{a_{j+1}})=(\overline{a_{j+2}+1},\ldots,\overline{a_{n}},\overline{a_{1}},\ldots,\overline{a_{j-1}}) \oplus (\overline{0},\overline{1},\overline{0},\overline{1}); \\
        (\overline{b_{j+2}},\ldots,\overline{b_{n}},\overline{b_{1}},\ldots,\overline{b_{j}},\overline{b_{j+1}})=(\overline{b_{j+2}},\ldots,\overline{b_{n}},\overline{b_{1}},\ldots,\overline{b_{j-1}+b_{j+1}}) \oplus (\overline{b_{j+1}},\overline{0},\overline{b_{j+1}},\overline{0}).
    \end{array}
\right. \\ \]

\noindent Ainsi, $\overline{a_{j+1}}=\overline{1}$. Si $\overline{b_{j+1}}=\overline{1}$ alors les deux solutions sont simultanément réductibles. Donc, $\overline{b_{j+1}}=\overline{0}$. En continuant ainsi, on voit que tous les $\overline{a_{j}}$ sont égaux à $\overline{1}$ et tous les $\overline{b_{j}}$ sont égaux à $\overline{0}$. Comme $M_{1}(\overline{1})$ est d'ordre 3 dans $PSL_{2}(\mathbb{Z}/2\mathbb{Z})$ et $M_{1}(\overline{0})$ est d'ordre 2 dans $PSL_{2}(\mathbb{Z}/2\mathbb{Z})$, on a $n$ divisible par 6. Si $n \geq 12$ alors les deux solutions sont simultanément réductibles. En effet, on a :
\[\left\{
    \begin{array}{ll}
        \underbrace{(\overline{1},\ldots,\overline{1})}_{n}=\underbrace{(\overline{0},\overline{1},\ldots,\overline{1},\overline{0})}_{n-4} \oplus (\overline{1},\overline{1},\overline{1},\overline{1},\overline{1},\overline{1}); \\
        \underbrace{(\overline{0},\ldots,\overline{0})}_{n}=\underbrace{(\overline{0},\ldots,\overline{0})}_{n-4} \oplus (\overline{0},\overline{0},\overline{0},\overline{0},\overline{0},\overline{0}).
    \end{array}
\right. \\ \]

\noindent En revanche, si $n=6$ les deux solutions ne sont pas simultanément réductibles. En effet, pour réduire simultanément les deux solutions, il nous faudrait une solution de taille 3 de la forme $(\overline{x},\overline{0},\overline{y})$ ou une solution de taille 4 de la forme $(\overline{x},\overline{1},\overline{1},\overline{y})$, ce qui n'existe pas.
\\
\\Si, pour tout $1 \leq j \leq n$, $\overline{a_{j}}=\overline{0}$. S'il existe $1 \leq k \leq n$ tel que $\overline{b_{k}}=\overline{0}$ alors les deux solutions sont simultanément réductibles. Donc, tous les $\overline{b_{j}}$ sont égaux à $\overline{1}$ et on aboutit, en procédant comme précédemment, à $(\overline{a_{1}},\ldots,\overline{a_{n}})=(\overline{0},\overline{0},\overline{0},\overline{0},\overline{0},\overline{0})$ et $(\overline{b_{1}},\ldots,\overline{b_{n}})=(\overline{1},\overline{1},\overline{1},\overline{1},\overline{1},\overline{1})$.

\end{proof}

On va maintenant effectuer la preuve du théorème \ref{27}. Notons que, puisque tous les corps à 4 éléments sont isomorphes, on peut, grâce au corollaire \ref{33}, utiliser $\mathbb{F}_{4}=\frac{\mathbb{Z}/2\mathbb{Z}[X]}{<X^{2}+\overline{1}>}$.

\begin{proof}[Démonstration du théorème \ref{27}]

Par le lemme \ref{35}, les solutions de taille 3 et 4 données dans l'énoncé sont les seules $\lambda$-quiddités irréductibles sur $\mathbb{F}_{4}$ de taille 3 et 4. Par le lemme \ref{35}, les solutions irréductibles de $(E_{\mathbb{F}_{4}})$ de taille supérieure à 5 ne contiennent que des $X$ et des $X+\overline{1}$. On vérifie que les solutions de taille comprise entre 5 et 9 ne contenant que $X$ et $X+\overline{1}$ sont celles données dans l'énoncé. Une solution réductible de taille 5 ou 6 contenant nécessairement $\overline{0}$ ou $\overline{1}$, $(X,X,X,X,X)$, $(X+\overline{1},X+\overline{1},X+\overline{1},X+\overline{1},X+\overline{1})$ et $(X,X+\overline{1},X,X+\overline{1},X,X+\overline{1})$ sont irréductibles. Si les solutions de l'énoncé de taille 8 étaient réductibles, elles pourraient être réduite par une solution de taille comprise entre 3 et 7, ce qui n'est pas le cas. De même, les les solutions de l'énoncé de taille 9 sont irréductibles.
\\
\\Considérons une $\lambda$-quiddité $(a_{1},\ldots,a_{n})$ sur $\mathbb{F}_{4}$ de taille supérieure à 10. Si celle-ci contient $\overline{0}$ ou $\overline{1}$, elle est réductible (lemme \ref{35}). On suppose donc qu'elle ne contient que $X$ et $X+\overline{1}$. On va montrer que celle-ci peut être réduite avec une solution de taille comprise entre 5 et 9, ce qui entraîne la réductibilité de la solution puisque $n \geq 10$. Afin de simplifier les notations, on note $Y:=X+\overline{1}$.
\\
\\Quitte à utiliser le corollaire \ref{34}, on peut supposer que $a_{2}=X$, puisque $\mathbb{F}_{4}$ est de caractéristique 2 et $(X+\overline{1})^{2}=X$. 
\\
\\On suppose pour le moment $a_{3}=X$. Si $a_{4}=X$ alors on peut réduire $(a_{1},\ldots,a_{n})$ avec $(X,X,X,X,X)$. Si $a_{4}=Y$, on s'intéresse aux termes suivants :
\begin{itemize}
\item si $a_{5}=a_{6}=Y$ alors on peut réduire $(a_{1},\ldots,a_{n})$ avec $(Y,Y,Y,Y,Y)$;
\item si $a_{5}=X$ et $a_{6}=Y$ alors on peut réduire $(a_{1},\ldots,a_{n})$ avec $(Y,X,Y,X,Y,X)$;
\item si $a_{5}=a_{6}=X$, on s'intéresse aux termes suivants :
\begin{itemize}[label=$\circ$]
\item si $a_{7}=X$ et $a_{8}=X$ ou $a_{8}=Y$ alors on peut réduire $(a_{1},\ldots,a_{n})$ avec $(X,X,X,X,X)$;
\item si $a_{7}=Y$ et $a_{8}=X$ alors on peut réduire $(a_{1},\ldots,a_{n})$ avec $(Y,X,X,Y,X,X,Y,X,X)$;
\item si $a_{7}=Y$ et $a_{8}=Y$ alors soit $a_{9}=Y$ et on peut réduire $(a_{1},\ldots,a_{n})$ avec $(Y,Y,Y,Y,Y)$ soit $a_{9}=X$ et on peut réduire $(a_{1},\ldots,a_{n})$ avec $(Y,Y,X,X,Y,Y,X,X)$;
\end{itemize}
\item si $a_{5}=Y$ et $a_{6}=X$, on s'intéresse aux termes suivants :
\begin{itemize}[label=$\circ$]
\item si $a_{7}=X$ et $a_{8}=X$ alors on peut réduire $(a_{1},\ldots,a_{n})$ avec $(X,X,X,X,X)$;
\item si $a_{7}=Y$ et $a_{8}=X$ alors on peut réduire $(a_{1},\ldots,a_{n})$ avec $(X,Y,X,Y,X,Y)$;
\item si $a_{7}=X$ et $a_{8}=Y$ alors on peut réduire $(a_{1},\ldots,a_{n})$ avec $(X,X,Y,Y,X,X,Y,Y)$;
\item si $a_{7}=Y$ et $a_{8}=Y$ alors soit $a_{9}=Y$ et on peut réduire $(a_{1},\ldots,a_{n})$ avec $(Y,Y,Y,Y,Y)$ soit $a_{9}=X$ et on peut réduire $(a_{1},\ldots,a_{n})$ avec $(Y,X,Y,Y,X,Y,Y,X,Y)$.
\\
\end{itemize}
\end{itemize}

\noindent On suppose maintenant $a_{3}=Y$. On s'intéresse aux termes suivants :
\begin{itemize}
\item si $a_{4}=a_{5}=Y$ alors on peut réduire $(a_{1},\ldots,a_{n})$ avec $(Y,Y,Y,Y,Y)$;
\item si $a_{4}=X$ et $a_{5}=Y$ alors on peut réduire $(a_{1},\ldots,a_{n})$ avec $(Y,X,Y,X,Y,X)$;
\item si $a_{4}=a_{5}=X$, on s'intéresse aux termes suivants :
\begin{itemize}[label=$\circ$]
\item si $a_{6}=X$ et $a_{7}=X$ ou $a_{7}=Y$ alors on peut réduire $(a_{1},\ldots,a_{n})$ avec $(X,X,X,X,X)$;
\item si $a_{6}=Y$ et $a_{7}=X$ alors soit $a_{8}=Y$ et on peut réduire $(a_{1},\ldots,a_{n})$ avec $(Y,X,Y,X,Y,X)$, soit  $a_{8}=X$ et on peut réduire $(a_{1},\ldots,a_{n})$ avec $(X,X,Y,X,X,Y,X,X,Y)$;
\item si $a_{6}=Y$ et $a_{7}=Y$ alors soit $a_{8}=Y$ et on peut réduire $(a_{1},\ldots,a_{n})$ avec $(Y,Y,Y,Y,Y)$, soit $a_{8}=X$ et on peut réduire $(a_{1},\ldots,a_{n})$ avec $(Y,Y,X,X,Y,Y,X,X)$;
\end{itemize}
\item si $a_{4}=Y$ et $a_{5}=X$, on s'intéresse aux termes suivants :
\begin{itemize}[label=$\circ$]
\item si $a_{6}=a_{7}=X$ alors on peut réduire $(a_{1},\ldots,a_{n})$ avec $(X,X,X,X,X)$;
\item si $a_{6}=Y$ et $a_{7}=X$ alors on peut réduire $(a_{1},\ldots,a_{n})$ avec $(X,Y,X,Y,X,Y)$;
\item si $a_{6}=X$ et $a_{7}=Y$ alors on peut réduire $(a_{1},\ldots,a_{n})$ avec $(X,X,Y,Y,X,X,Y,Y)$;
\item si $a_{6}=Y$ et $a_{7}=Y$ alors soit $a_{8}=Y$ et on peut réduire $(a_{1},\ldots,a_{n})$ avec $(Y,Y,Y,Y,Y)$ soit $a_{8}=X$ et on peut réduire $(a_{1},\ldots,a_{n})$ avec $(Y,X,Y,Y,X,Y,Y,X,Y)$.
\\
\end{itemize}
\end{itemize}

\noindent Ainsi, les $\lambda$-quiddités irréductibles sur $\mathbb{F}_{4}$ sont de taille inférieure ou égale à 9 et appartiennent à la liste donnée dans l'énoncé du théorème. 

\end{proof}

Comme $(\mathbb{F}_{4},+)$ est un groupe de cardinal 4 dont les éléments sont d'ordre 1 ou 2, il est isomorphe au groupe $(\mathbb{Z}/2\mathbb{Z} \times \mathbb{Z}/2\mathbb{Z},+)$. Or, les $\lambda$-quiddités irréductibles sur ces deux anneaux sont très différentes. L'existence d'un isomorphisme de groupes entre anneaux commutatifs unitaires ne permet donc pas de transférer d'un anneau à l'autre les connaissances sur les $\lambda$-quiddités.

\subsection{Premières applications}

Dans cette sous-partie, on va donner quelques applications des résultats déjà démontrés et notamment du théorème \ref{26}. Pour commencer, on va utiliser la classification établie dans ce dernier pour connaître les $\lambda$-quiddités irréductibles sur d'autres anneaux. 
\\
\\ \indent Soient $X$ un ensemble et $\mathcal{P}(X)$ l'ensemble des parties de $X$. On munit $\mathcal{P}(X)$ de deux lois de composition internes, $\Delta$ et $\cap$ avec $B \Delta C:=(B \cap C^{c}) \cup (C \cap B^{c})$ (où $(B,C) \in \mathcal{P}(X)^{2}$ et $B^{c}$ désigne le complémentaire de $B$ dans $X$). $\Delta$ est associative et commutative, $\emptyset$ est élément neutre pour $\Delta$ et pour tout $B \in \mathcal{P}(X)$ on a $B \Delta B=\emptyset$. $\cap$ est associative et commutative, $X$ est élément neutre et $\cap$ est distributif par rapport à $\Delta$. En résumé, $(\mathcal{P}(X),\Delta, \cap)$ est un anneau commutatif unitaire. De pus, on dispose des applications suivantes :

\[\begin{array}{ccccc} 
f & : & \mathcal{P}(X) & \longrightarrow & \prod_{x \in X} \mathbb{Z}/2\mathbb{Z} \\
 & & B & \longmapsto & (\overline{b_{x}})_{x \in X}~{\rm avec}~\overline{b_{x}}=\left\{
    \begin{array}{ll}
        \overline{1} & \mbox{si}~x \in B; \\
        \overline{0} & \mbox{sinon}.
\end{array}
\right. \\
\end{array}, \begin{array}{ccccc} 
g & : & \prod_{x \in X} \mathbb{Z}/2\mathbb{Z} & \longrightarrow & \mathcal{P}(X) \\
 & & (\overline{b_{x}})_{x \in X} & \longmapsto & \{x~{\rm tel~que}~\overline{b_{x}}=\overline{1}\}
\end{array}.\]

\noindent $f$ et $g$ sont des bijections réciproques et on vérifie aisément que $f$ est un morphisme d'anneaux unitaires. Aussi, on peut, grâce au corollaire \ref{33}, transférer les résultats connus pour $\prod_{x \in X} \mathbb{Z}/2\mathbb{Z}$. Cela permet notamment d'avoir :

\begin{proposition}
\label{36}

Soit $X=\{a,b\}$. Les $\lambda$-quiddités irréductibles sur $(\mathcal{P}(X),\Delta, \cap)$ sont (à permutations cycliques près): 
\begin{itemize}
\item $(X,X,X)$;
\item $(\{a\},\{b\},\{a\},\{b\})$, $(\emptyset,\emptyset,\emptyset,\emptyset)$, $(\{a\},\emptyset,\{a\},\emptyset)$, $(\{b\},\emptyset,\{b\},\emptyset)$;
\item $(\{a\},\{a\},\{a\},\{a\},\{a\},\{a\})$ et $(\{b\},\{b\},\{b\},\{b\},\{b\},\{b\})$.
\end{itemize}

\end{proposition}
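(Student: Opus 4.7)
Le plan est d'exploiter directement l'isomorphisme d'anneaux unitaires $f$ construit juste avant l'énoncé et le corollaire \ref{33}. En effet, puisque $X=\{a,b\}$ ne contient que deux éléments, on a $\prod_{x \in X} \mathbb{Z}/2\mathbb{Z}=\mathbb{Z}/2\mathbb{Z} \times \mathbb{Z}/2\mathbb{Z}$, et les $\lambda$-quiddités irréductibles sur ce dernier anneau sont entièrement décrites par le théorème \ref{26}. Il suffira donc de transférer cette liste via la bijection réciproque $g$.

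Concrètement, je commencerais par expliciter l'action de $g$ sur les quatre éléments de $\mathbb{Z}/2\mathbb{Z} \times \mathbb{Z}/2\mathbb{Z}$ : $g((\overline{0},\overline{0}))=\emptyset$, $g((\overline{1},\overline{0}))=\{a\}$, $g((\overline{0},\overline{1}))=\{b\}$ et $g((\overline{1},\overline{1}))=X$. Puis, en appliquant le corollaire \ref{33} à l'isomorphisme $f$ (dont l'inverse est $g$), j'obtiendrais que $(B_{1},\ldots,B_{n})$ est une $\lambda$-quiddité irréductible sur $(\mathcal{P}(X),\Delta,\cap)$ si et seulement si $(f(B_{1}),\ldots,f(B_{n}))$ l'est sur $\mathbb{Z}/2\mathbb{Z} \times \mathbb{Z}/2\mathbb{Z}$.

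Il resterait alors, pour conclure, à parcourir la liste fournie par le théorème \ref{26} et à appliquer $g$ composante par composante à chacune de ses entrées. Cette traduction donne directement les sept familles annoncées (à permutations cycliques près), ce qui termine la preuve.

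Il n'y a pas véritablement d'obstacle dans cette démonstration : l'essentiel du travail a déjà été fourni par le théorème \ref{26} et le corollaire \ref{33}. Le seul point qui demande un minimum de soin est la vérification que $f$ est bien un morphisme d'anneaux unitaires (et non simplement un morphisme de groupes), c'est-à-dire le respect des opérations $\Delta \leftrightarrow +$, $\cap \leftrightarrow \times$ et l'envoi de $X$ sur $(\overline{1},\overline{1})$ — vérification déjà signalée comme immédiate dans le texte précédant l'énoncé.
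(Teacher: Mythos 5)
Votre démonstration est correcte et suit exactement la même voie que celle du texte, qui se contente d'invoquer la discussion précédente (l'isomorphisme d'anneaux unitaires $f$), le corollaire \ref{33} et le théorème \ref{26}. Vous explicitez simplement la traduction par $g$ des éléments de $\mathbb{Z}/2\mathbb{Z}\times\mathbb{Z}/2\mathbb{Z}$, ce qui est un complément utile mais ne change rien à l'argument.
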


\begin{proof}

C'est une conséquence de la discussion précédente, du corollaire \ref{33} et du théorème \ref{26}.

\end{proof}

De même, $(\mathbb{Z}/2\mathbb{Z}) \times (\mathbb{Z}/2\mathbb{Z})$ est naturellement isomorphe à l'ensemble des matrices diagonales $2 \times 2$ à coefficients dans $\mathbb{Z}/2\mathbb{Z}$. Aussi, on a :

\begin{proposition}
\label{36}

On pose $B=\begin{pmatrix}
    \overline{1} & \overline{0} \\
    \overline{0} &  \overline{0}
   \end{pmatrix}$ et $C=\begin{pmatrix}
    \overline{0} & \overline{0} \\
    \overline{0} &  \overline{1}
   \end{pmatrix}$. Les $\lambda$-quiddités irréductibles sur l'ensemble des matrices diagonales $2 \times 2$ à coefficients dans $\mathbb{Z}/2\mathbb{Z}$ sont (à permutations cycliques près): 
\begin{itemize}
\item $(Id,Id,Id)$;
\item $(B,C,B,C)$, $(B,0,B,0)$, $(C,0,C,0)$, $(0,0,0,0)$;
\item $(B,B,B,B,B,B)$ et $(C,C,C,C,C,C)$.
\end{itemize}

\end{proposition}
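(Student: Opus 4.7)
The plan is to recognize that the ring $D$ of $2 \times 2$ diagonal matrices with coefficients in $\mathbb{Z}/2\mathbb{Z}$ is canonically isomorphic, as a unital commutative ring, to $(\mathbb{Z}/2\mathbb{Z}) \times (\mathbb{Z}/2\mathbb{Z})$, and then to invoke Corollary \ref{33} to transport the classification from Theorem \ref{26} to $D$.

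First I would define the map
\[
\varphi : D \longrightarrow (\mathbb{Z}/2\mathbb{Z}) \times (\mathbb{Z}/2\mathbb{Z}), \qquad \begin{pmatrix} \overline{a} & \overline{0} \\ \overline{0} & \overline{d} \end{pmatrix} \longmapsto (\overline{a},\overline{d}).
\]
A routine verification shows that $\varphi$ is a bijection that respects both the addition and the multiplication of diagonal matrices and sends $Id$ to $(\overline{1},\overline{1})$; thus $\varphi$ is an isomorphism of unital commutative rings. In particular, $\varphi(0) = (\overline{0},\overline{0})$, $\varphi(B) = (\overline{1},\overline{0})$, $\varphi(C) = (\overline{0},\overline{1})$, and $\varphi(Id) = (\overline{1},\overline{1})$.

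Next, by Corollary \ref{33}, a tuple $(m_{1},\ldots,m_{n}) \in D^{n}$ is an irreducible $\lambda$-quiddity on $D$ if and only if $(\varphi(m_{1}),\ldots,\varphi(m_{n}))$ is an irreducible $\lambda$-quiddity on $(\mathbb{Z}/2\mathbb{Z}) \times (\mathbb{Z}/2\mathbb{Z})$. Applying $\varphi^{-1}$ to each entry of each tuple in the list given by Theorem \ref{26} therefore yields the complete list of irreducible $\lambda$-quiddities on $D$ (up to cyclic permutation). Concretely, $((\overline{1},\overline{1}),(\overline{1},\overline{1}),(\overline{1},\overline{1}))$ becomes $(Id,Id,Id)$; the four irreducible tuples of size $4$ become $(B,C,B,C)$, $(0,0,0,0)$, $(B,0,B,0)$ and $(C,0,C,0)$; and the two irreducible tuples of size $6$ become $(B,B,B,B,B,B)$ and $(C,C,C,C,C,C)$, matching the statement exactly.

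There is essentially no obstacle: the substantive classification work has already been done in Theorem \ref{26}, and the only thing to check is that $\varphi$ is a unital ring isomorphism, which is immediate from the componentwise definition of the operations on diagonal matrices. The proof is a one-line application of Corollary \ref{33} and Theorem \ref{26} once the isomorphism is pointed out.
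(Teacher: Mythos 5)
Your proof is correct and follows exactly the route the paper intends: it exhibits the canonical unital ring isomorphism between the diagonal matrices and $(\mathbb{Z}/2\mathbb{Z}) \times (\mathbb{Z}/2\mathbb{Z})$, then transports the classification of Theorem \ref{26} via Corollary \ref{33}. The paper leaves this proposition without an explicit proof precisely because it is this immediate application of the isomorphism, so your write-up matches the intended argument.
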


Pour avoir des informations pour des matrices de taille plus grande ou pour des ensembles contenant plus d'éléments, il faudrait obtenir plus d'éléments sur les $\lambda$-quiddités irréductibles sur $\prod_{x \in X} \mathbb{Z}/2\mathbb{Z}$. Cela conduit notamment au problème ouvert suivant :

\begin{pro}

Soit $I$ un ensemble. Trouver des conditions nécessaires et suffisantes sur $I$ pour qu'il n'y ait qu'un nombre fini de $\lambda$-quiddités irréductibles sur $\prod_{i \in I} \mathbb{Z}/2\mathbb{Z}$.

\end{pro}

\noindent Dans le cas de $\mathbb{Z}/2\mathbb{Z}$, on dispose d'une description combinatoire élégante des solutions de $(E_{2})$.

\begin{definition}

i) (\cite{M}, Définition 3.1) On appelle décomposition de type (3|4) le découpage d'un polygone convexe $P$ à $n$ sommets par des diagonales ne se coupant qu'aux sommets et tel que les sous-polygones soient des triangles ou des quadrilatères. 
\\
\\ii) (\cite{M}, Définition 3.3) À chaque sommet de $P$ on associe un élément $\overline{c}$ de $\mathbb{Z}/2\mathbb{Z}$ de la façon suivante $$\overline{c}=
\left\{
\begin{array}{ll}\overline{1}, & \hbox{si le nombre de triangles utilisant ce sommet est impair};\\[2pt]
\overline{0}, & \hbox{si le nombre de triangles utilisant ce sommet est pair}.
\end{array}
\right.
$$
On parcourt les sommets, à partir de n'importe lequel d'entre eux, dans le sens horaire ou le sens trigonométrique, pour obtenir le $n$-uplet $(\overline{c_{1}},\ldots,\overline{c_{n}})$. Ce $n$-uplet est la quiddité de la décomposition de type (3|4) de $P$.

\end{definition}

\begin{theorem}[\cite{M}, Théorème 1]
Soit $n \geq 2$.
\\ i) Une solution de $(E_{2})$ de taille $n$ est la quiddité d'une décomposition de type (3|4) d'un polygone convexe à $n$ sommets.
\\ ii) La quiddité d'une décomposition de type (3|4) d'un polygone convexe à $n$ sommets est une solution de $(E_{2})$ de taille $n$.

\end{theorem}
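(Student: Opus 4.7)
Mon plan est d'établir les deux assertions par récurrence sur $n$, l'outil principal étant un principe de recollement identifiant l'opération $\oplus$ au découpage d'un polygone décoré selon l'une de ses diagonales.

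Je commencerai par traiter les petits cas. Pour $n=3$, la seule décomposition de type $(3|4)$ est le triangle lui-même, dont la quiddité $(\overline{1},\overline{1},\overline{1})$ est également l'unique solution de taille 3 par le lemme \ref{35}. Pour $n=4$, une vérification directe montre que les trois décompositions possibles --- le quadrilatère nu et ses deux triangulations --- donnent respectivement $(\overline{0},\overline{0},\overline{0},\overline{0})$, $(\overline{0},\overline{1},\overline{0},\overline{1})$ et $(\overline{1},\overline{0},\overline{1},\overline{0})$, ce qui épuise les solutions de taille 4 fournies par le lemme \ref{35}.

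Le cœur de la preuve est le lemme de recollement suivant : si un polygone convexe $P$ à $n$ sommets admet une décomposition de type $(3|4)$ utilisant au moins une diagonale $d$, alors $d$ sépare $P$ en deux sous-polygones $P_{1}$ et $P_{2}$ munis chacun d'une décomposition induite de type $(3|4)$, et, pour un sommet initial bien choisi sur $d$, la quiddité de $P$ est la somme $\oplus$ de celle de $P_{1}$ et de celle de $P_{2}$. L'argument est un décompte sommet par sommet : en tout sommet distinct des extrémités de $d$, l'ensemble des triangles incidents est inchangé, tandis qu'en chaque extrémité de $d$, les triangles de $P$ s'y répartissent de façon disjointe entre les deux parties, de sorte que les parités s'ajoutent modulo $2$ --- ce qui est précisément la règle définissant $\oplus$.

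Muni de ce lemme, la partie ii) se déduit par récurrence sur le nombre de sous-polygones, en utilisant le fait rappelé à la section \ref{RP} selon lequel $\oplus$ envoie les couples de solutions de $(E_{2})$ sur des solutions. Pour la partie i), par le lemme \ref{35} toute solution de $(E_{2})$ de taille $n \geq 5$ contient nécessairement $\overline{0}$ ou $\overline{1}$ (ce sont les seuls éléments de $\mathbb{Z}/2\mathbb{Z}$) et est donc réductible ; en l'écrivant sous la forme $(a_{1},\ldots,a_{m}) \oplus (b_{1},\ldots,b_{l})$ avec $m,l \geq 3$, l'hypothèse de récurrence fournit des décompositions de type $(3|4)$ d'un $m$-gone et d'un $l$-gone que l'on recolle le long d'une arête commune pour obtenir la décomposition cherchée du $n$-gone. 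La principale difficulté réside dans la comptabilité cyclique du lemme de recollement --- rendre cohérents les indexations, les orientations et le choix du sommet initial de part et d'autre de $d$ --- afin d'obtenir l'égalité des quiddités littéralement, et non seulement à la relation d'équivalence $\sim$ près.
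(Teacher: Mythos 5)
Ce théorème est importé de \cite{M} (Théorème 1) et l'article n'en donne aucune démonstration : il n'y a donc pas de preuve « maison » à laquelle comparer votre tentative. Sur le fond, votre esquisse suit la stratégie naturelle et cohérente avec l'appareil du papier : le lemme de recollement que vous énoncez (une diagonale de la décomposition correspond exactement à une écriture de la quiddité comme somme $\oplus$, les parités s'additionnant aux deux extrémités de la diagonale et restant inchangées ailleurs) est précisément la description combinatoire de $\oplus$ invoquée dans l'article (\cite{C}, section 4), et votre récurrence pour ii) est le même schéma que la preuve de la proposition \ref{46} (retrait d'un sous-polygone extérieur). Pour i), l'observation clé --- sur $\mathbb{Z}/2\mathbb{Z}$ tout élément vaut $\overline{0}$ ou $\overline{1}$, donc par le lemme \ref{35} toute solution de taille $\geq 5$ est réductible, et les deux morceaux de la réduction sont eux-mêmes des solutions par la propriété de $\oplus$ rappelée en section \ref{RP} --- est correcte et rend la récurrence concluante. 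Deux points restent à soigner : d'une part le cas $n=2$ de l'énoncé (la solution $(\overline{0},\overline{0})$ et le « bigone » dégénéré) n'est pas traité par vos cas de base $n=3,4$ ; d'autre part le lemme de recollement n'est qu'affirmé, et c'est bien là que se concentre tout le travail (choix du sommet initial sur la diagonale, compatibilité avec $\sim$, vérification que le recollement de deux décompositions de type $(3|4)$ le long d'une arête en redonne une). Moyennant la rédaction complète de ce lemme, l'argument est valide.
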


En combinant ce résultat au théorème \ref{25}, on obtient la description combinatoire suivante des $\lambda$-quiddités sur $\prod_{i \in I} \mathbb{Z}/2\mathbb{Z}$. On considère un polygone convexe $P$ à $n$ sommets. Pour chaque $i \in I$, on effectue une décomposition de type (3|4) de $P$, qu'on indexe avec $i$. Pour chaque sommet, on considère la famille $(\overline{a_{i}})_{i \in I}$ où $\overline{a_{i}}$ est l'élément de la quiddité de la décomposition de type (3|4) de $P$ indexée par $i$ utilisant le sommet choisi. En parcourant les sommets dans le sens horaire ou dans le sens trigonométrique, on forme un $n$-uplet d'éléments de $\prod_{i \in I} \mathbb{Z}/2\mathbb{Z}$ qui est une solution de \eqref{a} de taille $n$. Réciproquement, les $\lambda$-quiddités sur $\prod_{i \in I} \mathbb{Z}/2\mathbb{Z}$ de taille $n$ peuvent toute se construire ainsi. Par exemple, considérons le triplet de décompositions de type (3|4) ci-dessous.
$$
\shorthandoff{; :!?}
\xymatrix @!0 @R=0.45cm @C=0.8cm
{
&\overline{0}\ar@{-}[ld]\ar@{-}[rd]&
\\
\overline{0}\ar@{-}[dd]&&\overline{0}\ar@{-}[dd]
\\
\\
\overline{0}\ar@{-}[rruu]&& \overline{0}
\\
&\overline{0}\ar@{-}[lu]\ar@{-}[ru]&
}
\qquad
\qquad
\xymatrix @!0 @R=0.45cm @C=0.8cm
{
&\overline{1}\ar@{-}[ld]\ar@{-}[rd]&
\\
\overline{0}\ar@{-}[dd]\ar@{-}[rr]&&\overline{0}\ar@{-}[dd]
\\
\\
\overline{0}\ar@{-}[rruu]&& \overline{1}
\\
&\overline{0}\ar@{-}[ruuu]\ar@{-}[lu]\ar@{-}[ru]&
}\qquad
\qquad
\xymatrix @!0 @R=0.45cm @C=0.8cm
{
&\overline{0}\ar@{-}[ld]\ar@{-}[rd]&
\\
\overline{1}\ar@{-}[dd]&&\overline{0}\ar@{-}[dd]
\\
\\
\overline{0}\ar@{-}[ruuu]\ar@{-}[rr]\ar@{-}[rruu]&& \overline{0}
\\
&\overline{1}\ar@{-}[lu]\ar@{-}[ru]&
}
$$

\noindent On obtient (à équivalence près) la solution de \eqref{a} sur $(\mathbb{Z}/2\mathbb{Z}) \times (\mathbb{Z}/2\mathbb{Z}) \times (\mathbb{Z}/2\mathbb{Z})$ suivante : \[((\overline{0},\overline{1},\overline{0}), (\overline{0},\overline{0},\overline{1}), (\overline{0},\overline{0},\overline{0}), (\overline{0},\overline{0},\overline{1}), (\overline{0},\overline{1},\overline{0}), (\overline{0},\overline{0},\overline{0})).\]

Par ailleurs, on peut également utiliser les descriptions combinatoires pour établir la réductibilité d'une solution. Une $\lambda$-quiddité sur $\prod_{i \in I} \mathbb{Z}/2\mathbb{Z}$ est réductible si et seulement s'il existe une famille de décompositions de type (3|4) représentant la solution telle que toutes les décompositions possèdent une diagonale commune. Ceci est, par exemple, le cas du triplet de décomposition ci-dessus. Donc, la solution $((\overline{0},\overline{1},\overline{0}), (\overline{0},\overline{0},\overline{1}), (\overline{0},\overline{0},\overline{0}), (\overline{0},\overline{0},\overline{1}), (\overline{0},\overline{1},\overline{0}), (\overline{0},\overline{0},\overline{0}))$ est réductible.
\\
\\ \indent Toutefois, il faut noter que deux familles de décomposition de type (3|4) différentes peuvent donner la même solution et qu'il est possible d'en avoir certaines qui vérifient la condition de réductibilité alors que d'autres ne la vérifient pas. Aussi, il convient de manier cette interprétation géométrique de la réductibilité avec précaution. Par exemple, $((\overline{0},\overline{0}),(\overline{0},\overline{0}),(\overline{0},\overline{0}), (\overline{0},\overline{0}),(\overline{0},\overline{0}),(\overline{0},\overline{0}),(\overline{0},\overline{0}),(\overline{0},\overline{0}))$ est réductible alors que les deux décompositions de type (3|4) suivantes n'ont pas de diagonale commune.

$$
\shorthandoff{; :!?}
\xymatrix @!0 @R=0.40cm @C=0.5cm
{
&&\overline{0}\ar@{-}[lldd]\ar@{-}[dddddd] \ar@{-}[rr]&&\overline{0}\ar@{-}[rrdd]\ar@{-}[dddddd]&
\\
&&&
\\
\overline{0}\ar@{-}[dd]&&&&&& \overline{0} \ar@{-}[dd]
\\
&&&&
\\
\overline{0}&&&&&& \overline{0}
\\
&&&
\\
&&\overline{0} \ar@{-}[rr]\ar@{-}[lluu] &&\overline{0} \ar@{-}[rruu]
}
\qquad
\qquad
\xymatrix @!0 @R=0.40cm @C=0.5cm
{
&&\overline{0}\ar@{-}[lldd] \ar@{-}[rr]&&\overline{0}\ar@{-}[rrdd]&
\\
&&&
\\
\overline{0}\ar@{-}[rrrrrr]\ar@{-}[dd]&&&&&& \overline{0} \ar@{-}[dd]
\\
&&&&
\\
\overline{0}\ar@{-}[rrrrrr]&&&&&& \overline{0}
\\
&&&
\\
&&\overline{0} \ar@{-}[rr]\ar@{-}[lluu] &&\overline{0} \ar@{-}[rruu]
}
$$

Grâce à la description combinatoire de l'opération $\oplus$ (voir \cite{C} section 4), on pourrait généraliser ces descriptions combinatoires à tous les produits directs d'anneaux commutatifs unitaires. Cependant, la complexité des descriptions combinatoires des solutions sur chaque anneau (qui augmente au fur et à mesure que le nombre de solutions irréductibles augmente) et l'ajout de la condition du $\epsilon$ commun présente dans le théorème \ref{25} diminue grandement l'intérêt qu'aurait ces descriptions combinatoires. Aussi, on préfère se centrer ici sur le cas plus simple et plus élégant des $\lambda$-quiddités sur $\prod_{i \in I} \mathbb{Z}/2\mathbb{Z}$.

\section{Applications du théorème \ref{25}}
\label{appli}

Dans cette partie, on va utiliser le théorème \ref{25} pour obtenir des informations sur les $\lambda$-quiddités irréductibles lorsque l'on se place sur divers produits directs d'anneaux commutatifs unitaires.

\subsection{$\lambda$-quiddités sur quelques anneaux du type $\mathbb{Z}/n\mathbb{Z} \times \mathbb{Z}/m \mathbb{Z}$}

L'objectif de cette sous-section est d'étudier en détail les solutions irréductibles de \eqref{a} pour quelques exemples d'anneaux construits comme le produit direct de deux anneaux d'entiers modulaires. Sur ce sujet, on dispose déjà de la description complète dans le cas de $\mathbb{Z}/2\mathbb{Z} \times \mathbb{Z}/2\mathbb{Z}$ (voir Théorème \ref{26}). Ici, on souhaite aller plus loin en considérant d'autres cas. Dans cette optique, il semble intéressant de rappeler le théorème décrivant les $\lambda$-quiddités irréductibles sur $\mathbb{Z}/N\mathbb{Z}$ dans le cas des petites valeurs de $N$.

\begin{theorem}[\cite{M2} Théorème 2.5]
\label{41}

i)Les solutions irréductibles de $(E_{2})$ sont $(\overline{1},\overline{1},\overline{1})$ et $(\overline{0},\overline{0},\overline{0},\overline{0})$.
\\
\\ii)Les solutions irréductibles de $(E_{3})$ sont $(\overline{1},\overline{1},\overline{1})$, $(\overline{-1},\overline{-1},\overline{-1})$ et $(\overline{0},\overline{0},\overline{0},\overline{0})$.
\\
\\iii)Les solutions irréductibles de $(E_{4})$ sont $(\overline{1},\overline{1},\overline{1})$, $(\overline{-1},\overline{-1},\overline{-1})$, $(\overline{0},\overline{0},\overline{0},\overline{0})$, $(\overline{0},\overline{2},\overline{0},\overline{2})$ ,$(\overline{2},\overline{0},\overline{2},\overline{0})$ et $(\overline{2},\overline{2},\overline{2},\overline{2})$.
\\
\\iv)Les solutions irréductibles de $(E_{5})$ sont (à permutations cycliques près) $(\overline{1},\overline{1},\overline{1})$, $(\overline{-1},\overline{-1},\overline{-1})$, $(\overline{0},\overline{0},\overline{0},\overline{0})$, $(\overline{0},\overline{2},\overline{0},\overline{3})$, $(\overline{2},\overline{2},\overline{2},\overline{2},\overline{2})$, $(\overline{3},\overline{3},\overline{3},\overline{3},\overline{3})$, $(\overline{3},\overline{2},\overline{2},\overline{3},\overline{2},\overline{2})$, $(\overline{2},\overline{3},\overline{3},\overline{2},\overline{3},\overline{3})$, $(\overline{2},\overline{3},\overline{2},\overline{3},\overline{2},\overline{3})$.
\\
\\v)Les solutions irréductibles de $(E_{6})$ sont (à permutations cycliques près) $(\overline{1},\overline{1},\overline{1})$, $(\overline{-1},\overline{-1},\overline{-1})$, $(\overline{0},\overline{0},\overline{0},\overline{0})$, $(\overline{2},\overline{4},\overline{2},\overline{4})$, $(\overline{2},\overline{3},\overline{4},\overline{3})$, $(\overline{0},\overline{2},\overline{0},\overline{4})$, $(\overline{0},\overline{3},\overline{0},\overline{3})$, $(\overline{2},\overline{2},\overline{2},\overline{2},\overline{2},\overline{2})$, $(\overline{3},\overline{3},\overline{3},\overline{3},\overline{3},\overline{3})$, $(\overline{4},\overline{4},\overline{4},\overline{4},\overline{4},\overline{4})$.

\end{theorem}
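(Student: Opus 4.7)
The plan is a case analysis on the size $n$ of the solution, combined with an analysis of the admissible alphabet, for each $N \in \{2,3,4,5,6\}$ separately. For sizes $n \leq 4$, Lemma \ref{35} delivers an immediate description: size~1 is empty; size~2 produces only the reducible $(\overline{0},\overline{0})$; size~3 forces $(\pm\overline{1},\pm\overline{1},\pm\overline{1})$, which is automatically irreducible; size~4 gives exactly the tuples $(-\overline{a},\overline{b},\overline{a},-\overline{b})$ with $\overline{a}\,\overline{b}=\overline{0}$ and $(\overline{a},\overline{b},\overline{a},\overline{b})$ with $\overline{a}\,\overline{b}=\overline{2}$, and the irreducible ones are precisely those avoiding $\pm \overline{1}$. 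Enumerating these modulo $\sim$ for each of the five values of $N$ produces the size-3 and size-4 entries of the theorem.

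For sizes $n \geq 5$, Lemma \ref{35} further restricts the alphabet of any irreducible solution: neither $\overline{0}$ nor $\pm \overline{1}$ can appear. Setting $S_N := \mathbb{Z}/N\mathbb{Z} \setminus \{\overline{0},\overline{1},\overline{-1}\}$, one has $S_2 = S_3 = \emptyset$ (closing off those cases), $S_4 = \{\overline{2}\}$, $S_5 = \{\overline{2},\overline{3}\}$, and $S_6 = \{\overline{2},\overline{3},\overline{4}\}$. For $N=4$, every candidate is a constant tuple $(\overline{2},\ldots,\overline{2})$; a direct computation of the powers of $M_1(\overline{2})$ in $SL_2(\mathbb{Z}/4\mathbb{Z})$ shows that no such tuple of length $\geq 5$ equals $\pm \mathrm{Id}$, so no irreducible solution of size $\geq 5$ arises.

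The substantive work lies in $N=5$ and $N=6$, and I would follow the strategy used later in the proof of Theorem \ref{27}. First, for each size between $5$ and some explicit bound $n_0(N)$, list all words on $S_N$ whose matrix product is $\pm \mathrm{Id}$, group them up to $\sim$, and decide irreducibility by checking whether they admit a decomposition via one of the already-listed irreducibles of strictly smaller size. This yields the remaining entries of the theorem. Second, for sizes $n > n_0(N)$, one shows by a finite case tree on the first few letters $(a_1,a_2,\ldots)$ that any word on $S_N$ satisfying \eqref{p} must contain, up to $\sim$, a factor matching one of the listed irreducibles of size $\leq n_0(N)$, and is therefore reducible.

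The main obstacle is precisely this last step: devising an exhaustive but tractable case tree showing that the finitely many listed irreducibles suffice to reduce every longer solution. The finiteness of $PSL_2(\mathbb{Z}/N\mathbb{Z})$ guarantees the existence of such a bound $n_0(N)$, but making it effective requires the careful combinatorial enumeration performed for $\mathbb{F}_4$ in Theorem \ref{27}, adapted here to the slightly larger alphabets $S_5$ and $S_6$, and with the technical nuisance that $S_5$ and $S_6$ are not closed under a Frobenius-type automorphism, so no symmetry shortcut analogous to Corollary \ref{34} is available.
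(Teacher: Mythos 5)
A preliminary remark: the paper does not prove Theorem \ref{41} at all — it is quoted from \cite{M2} — and the only part it revisits is v), which is re-derived in Proposition \ref{42} by a genuinely different route: classify the pairs of solutions of $(E_{2})$ and $(E_{3})$ that are not simultaneously reducible (Theorem \ref{25}), then transport the result to $\mathbb{Z}/6\mathbb{Z}$ through the Chinese remainder isomorphism and Corollary \ref{33}. Your direct attack is therefore not the paper's route and has to stand on its own.

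It does not quite stand. The treatment of sizes $n\leq 4$ via Lemma \ref{35}, and the elimination of $N=2,3$ for $n\geq 5$ via the empty alphabet $S_{N}$, are fine. But your $N=4$ step contains a false claim: by Lemma \ref{47}, $M_{n}(\overline{2},\ldots,\overline{2})$ reduces mod $4$ to $Id$ whenever $4\mid n$, so the constant words of length $8,12,16,\ldots$ \emph{are} solutions of $(E_{4})$. What saves the statement is that they are reducible, e.g. the length-$8$ word equals $(\overline{0},\overline{2},\overline{2},\overline{2},\overline{2},\overline{0})\oplus(\overline{2},\overline{2},\overline{2},\overline{2})$, and one peels off $(\overline{2},\overline{2},\overline{2},\overline{2})$ in the same way from any longer multiple of $4$; that is a different argument from the one you gave. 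More seriously, for $N=5$ and $N=6$ — which carry essentially all the content of iv) and v) beyond size $4$ — you only describe the shape of an argument. The bound $n_{0}(N)$ does not follow from the finiteness of $PSL_{2}(\mathbb{Z}/N\mathbb{Z})$: finiteness of the ring does not by itself bound the sizes of irreducible $\lambda$-quiddities (this is exactly the nontrivial content behind the quantities $\ell_{A}$ and the open problem of Section \ref{appli}), and the exhaustive case tree you defer to is never constructed. As written, the proposal establishes i)--iii) once the $N=4$ slip is repaired, but iv) and v) remain unproved.
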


Par ailleurs, on dispose également de formules permettant de connaître le nombre de $\lambda$-quiddités sur $\mathbb{Z}/N\mathbb{Z}$, lorsque $N$ est premier ou lorsque $N=4m$ avec $m$ sans facteur carré (voir \cite{CM} Théorèmes 1.1 et 1.3). En les combinant au théorème \ref{25}, on peut obtenir le nombre de $\lambda$-quiddités sur certains produits directs d'anneaux $\mathbb{Z}/N\mathbb{Z}$.

\medskip        

\subsubsection{Cas de $\mathbb{Z}/2\mathbb{Z} \times \mathbb{Z}/3 \mathbb{Z}$}

Pour commencer, on va redémontrer, à l'aide du théorème \ref{25}, le point v) du théorème précédent, que l'on avait à l'origine obtenu de façon directe, en considérant l'anneau $\mathbb{Z}/2\mathbb{Z} \times \mathbb{Z}/3 \mathbb{Z}$. Plus précisément, on va démontrer le résultat ci-dessous. Pour cela, on notera, pour $a \in \mathbb{Z}$, $\overline{a}:=a+2\mathbb{Z}$ et $\overline{\overline{a}}:=a+3\mathbb{Z}$.

\begin{proposition}
\label{42}

Les $\lambda$-quiddités irréductibles sur $(\mathbb{Z}/2\mathbb{Z}) \times (\mathbb{Z}/3\mathbb{Z})$ sont (à permutations cycliques près) :
\begin{itemize}
\item $((\overline{1},\overline{\overline{1}}),(\overline{1},\overline{\overline{1}}),(\overline{1},\overline{\overline{1}}))$, $((\overline{1},\overline{\overline{-1}}),(\overline{1},\overline{\overline{-1}}),(\overline{1},\overline{\overline{-1}}))$;
\item $((\overline{0},\overline{\overline{0}}),(\overline{0},\overline{\overline{0}}),(\overline{0},\overline{\overline{0}}),(\overline{0},\overline{\overline{0}})), ((\overline{0},\overline{\overline{0}}),(\overline{0},\overline{\overline{1}}),(\overline{0},\overline{\overline{0}}),(\overline{0},\overline{\overline{-1}})), ((\overline{0},\overline{\overline{0}}),(\overline{1},\overline{\overline{0}}),(\overline{0},\overline{\overline{0}}),(\overline{1},\overline{\overline{0}})), 
\\((\overline{0},\overline{\overline{1}}),(\overline{0},\overline{\overline{-1}}),(\overline{0},\overline{\overline{1}}),(\overline{0},\overline{\overline{-1}})), ((\overline{1},\overline{\overline{0}}),(\overline{0},\overline{\overline{-1}}),(\overline{1},\overline{\overline{0}}),(\overline{0},\overline{\overline{1}}))$;
\item $((\overline{1},\overline{\overline{0}}),(\overline{1},\overline{\overline{0}}),(\overline{1},\overline{\overline{0}}),(\overline{1},\overline{\overline{0}}),(\overline{1},\overline{\overline{0}}),(\overline{1},\overline{\overline{0}})), ((\overline{0},\overline{\overline{1}}),(\overline{0},\overline{\overline{1}}),(\overline{0},\overline{\overline{1}}),(\overline{0},\overline{\overline{1}}),(\overline{0},\overline{\overline{1}}),(\overline{0},\overline{\overline{1}})), 
\\((\overline{0},\overline{\overline{-1}}),(\overline{0},\overline{\overline{-1}}),(\overline{0},\overline{\overline{-1}}),(\overline{0},\overline{\overline{-1}}),(\overline{0},\overline{\overline{-1}}),(\overline{0},\overline{\overline{-1}}))$.
\end{itemize}

\end{proposition}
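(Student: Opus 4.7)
The plan is to follow the strategy used in the proof of Theorem~\ref{26}. By Theorem~\ref{25}(i), a $\lambda$-quiddité of size~$n$ on $(\mathbb{Z}/2\mathbb{Z}) \times (\mathbb{Z}/3\mathbb{Z})$ is a pair $((\overline{a_j})_j,(\overline{\overline{b_j}})_j)$ of size-$n$ $\lambda$-quiddités on the two factors, and the common-$\epsilon$ condition is automatic because $\mathbb{Z}/2\mathbb{Z}$ has characteristic~$2$ (cf.\ the first remark after Theorem~\ref{25}). By Theorem~\ref{25}(ii) the pair yields an irreducible $\lambda$-quiddité on the product if and only if it is not simultaneously reducible.

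For $n=3$, Lemme~\ref{35} leaves only $(\overline{1},\overline{1},\overline{1})$ on $\mathbb{Z}/2\mathbb{Z}$ and $(\overline{\overline{\pm 1}},\overline{\overline{\pm 1}},\overline{\overline{\pm 1}})$ on $\mathbb{Z}/3\mathbb{Z}$, yielding the two size-$3$ solutions of the statement. For $n=4$, Lemme~\ref{35} furnishes three size-$4$ $\lambda$-quiddités on $\mathbb{Z}/2\mathbb{Z}$ and seven on $\mathbb{Z}/3\mathbb{Z}$; since a size-$4$ reduction is exactly a $\oplus$-sum of two size-$3$ solutions, I would check each of the $3\times 7$ pairs for a common $\sigma\in D_4$ realising such a decomposition on both factors. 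A direct enumeration isolates the five listed size-$4$ tuples.

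For $n\ge 5$ I would argue in two stages. First, a simultaneous size-$3$ reduction at position~$j$ exists if and only if $\overline{a_j}=\overline{1}$ and $\overline{\overline{b_j}}\in\{\overline{\overline{\pm 1}}\}$, since the only size-$3$ $\lambda$-quiddités on each factor are constant and their middle entry equals $\overline{a_j}$ (resp.\ $\overline{\overline{b_j}}$). Hence non-simultaneous reducibility forces, at every cyclic index~$j$, $\overline{a_j}=\overline{0}$ or $\overline{\overline{b_j}}=\overline{\overline{0}}$. A parallel but more elaborate analysis of simultaneous size-$4$ reductions, matching the two families $(-a,b,a,-b)$ with $ab=\overline{\overline{0}}$ and $(a,b,a,b)$ with $ab=\overline{\overline{2}}$ on $\mathbb{Z}/3\mathbb{Z}$ against the three size-$4$ $\lambda$-quiddités on $\mathbb{Z}/2\mathbb{Z}$, shows that every remaining non-constant pattern admits a simultaneous reduction. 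Consequently every non-simultaneously-reducible pair has both coordinates constant with $(\overline{a_j},\overline{\overline{b_j}})$ taking one of the three values $(\overline{1},\overline{\overline{0}})$, $(\overline{0},\overline{\overline{1}})$, $(\overline{0},\overline{\overline{-1}})$. Second, the relations $M_1(\overline{1})^3=Id$, $M_1(\overline{0})^2=Id$ in $SL_2(\mathbb{Z}/2\mathbb{Z})$, and $M_1(\overline{\overline{\pm 1}})^3=\pm Id$, $M_1(\overline{\overline{0}})^2=-Id$ in $SL_2(\mathbb{Z}/3\mathbb{Z})$, force $6\mid n$ for these constant pairs. For $n=6$, a direct check using Lemme~\ref{35} rules out every simultaneous decomposition into pieces of sizes $(3,5)$, $(4,4)$, or $(5,3)$ (each case forces the absurdity $\overline{\overline{0}}=\pm\overline{\overline{1}}$ on the $\mathbb{Z}/3\mathbb{Z}$ side, or $\overline{0}=\overline{1}$ on the $\mathbb{Z}/2\mathbb{Z}$ side), yielding the three size-$6$ tuples of the statement. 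For $n=6k$ with $k\ge 2$, an explicit simultaneous decomposition, for instance $(\overline{1})^{6k}=(\overline{1})^6\oplus(\overline{0},\overline{1}^{6k-6},\overline{0})$ paired with $(\overline{\overline{0}})^{6k}=(\overline{\overline{0}})^6\oplus(\overline{\overline{0}})^{6k-4}$ (both right-hand pieces being $\lambda$-quiddités on the respective factors), eliminates those sizes.

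The main obstacle is the size-$4$ bookkeeping in the first stage of the $n\ge 5$ analysis: one must verify that every non-constant configuration compatible with the size-$3$ exclusion already admits a simultaneous size-$4$ reduction. This requires a case split on the cyclic pattern of $(\overline{a_j},\overline{\overline{b_j}})\in\{\overline{0},\overline{1}\}\times\{\overline{\overline{0}},\overline{\overline{1}},\overline{\overline{-1}}\}$ and a systematic matching of one of the three size-$4$ windows on $\mathbb{Z}/2\mathbb{Z}$ with a compatible size-$4$ window on $\mathbb{Z}/3\mathbb{Z}$; the delicate sub-case is when the first component is mixed and the only admissible size-$4$ piece on $\mathbb{Z}/3\mathbb{Z}$ is $(\overline{\overline{0}})^4$, where one has to locate an adjacent pair of entries on the $\mathbb{Z}/2\mathbb{Z}$ side matching $(\overline{0},\overline{0})$, $(\overline{0},\overline{1})$, or $(\overline{1},\overline{0})$.
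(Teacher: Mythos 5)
Your proposal is correct and follows essentially the same route as the paper, which likewise invokes the théorème \ref{25} to reduce everything to the simultaneous reducibility of a pair of quiddities on $\mathbb{Z}/2\mathbb{Z}$ and $\mathbb{Z}/3\mathbb{Z}$, forces the non-simultaneously-reducible pairs of size $\geq 5$ to be constant equal to $(\overline{1},\overline{\overline{0}})$, $(\overline{0},\overline{\overline{1}})$ or $(\overline{0},\overline{\overline{-1}})$, and concludes via the order argument giving $6\mid n$ together with the $n=6$ and $n\geq 12$ checks. The only difference is organisational: the paper propagates the constancy position by position (as in the proof of the théorème \ref{26}) instead of stating your two-stage size-3/size-4 exclusion globally, and your ``delicate sub-case'' is exactly the simultaneous reduction by $(\overline{0},\overline{1},\overline{0},\overline{1})$ paired with $(\overline{\overline{-b_{j+1}}},\overline{\overline{0}},\overline{\overline{b_{j+1}}},\overline{\overline{0}})$ that the paper writes out explicitly.
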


Une fois ce résultat démontré, il suffit, pour retrouver le point v) du théorème \ref{41}, d'appliquer le corollaire \ref{33} et le lemme chinois. Dans notre cas, si $\varphi: (\mathbb{Z}/2\mathbb{Z}) \times (\mathbb{Z}/3\mathbb{Z}) \longrightarrow (\mathbb{Z}/6\mathbb{Z})$ est l'isomorphisme du lemme chinois, $\varphi((\overline{0},\overline{\overline{0}})=0+6\mathbb{Z}, \varphi((\overline{0},\overline{\overline{1}})=4+6\mathbb{Z}, \varphi((\overline{0},\overline{\overline{-1}})=2+6\mathbb{Z}, \varphi((\overline{1},\overline{\overline{0}})=3+6\mathbb{Z}, \varphi((\overline{1},\overline{\overline{1}})=1+6\mathbb{Z}$ et $\varphi((\overline{1},\overline{\overline{-1}})=5+6\mathbb{Z}$.

\begin{proof}

Avant de commencer, notons que $\mathbb{Z}/2\mathbb{Z}$ est de caractéristique 2 et que donc la condition de l'existence d'un $\epsilon$ commun présente dans le théorème \ref{25} est automatiquement vérifiée pour l'anneau considéré ici. Par ailleurs, la preuve ci-dessous étant très proche de celle du théorème \ref{26}, on omettra certains détails.
\\
\\À la lueur du théorème \ref{25}, on voit qu'il suffit de trouver les couples formés d'une solution de $(E_{2})$ et d'une solution de $(E_{3})$ qui ne sont pas simultanément réductibles. En utilisant le théorème \ref{25} et le lemme \ref{35}, on constate que les solutions irréductibles de \eqref{a} sur $\mathbb{Z}/2\mathbb{Z} \times \mathbb{Z}/3\mathbb{Z}$ de taille 3 et 4 sont celles données dans l'énoncé.
\\
\\Soient $n \geq 5$, $(\overline{a_{1}},\ldots,\overline{a_{n}})$ une $\lambda$-quiddité sur $\mathbb{Z}/2\mathbb{Z}$ et $(\overline{\overline{b_{1}}},\ldots,\overline{\overline{b_{n}}})$ une $\lambda$-quiddité sur $\mathbb{Z}/3\mathbb{Z}$. On suppose qu'elles ne sont pas simultanément réductibles.
\\
\\On suppose pour commencer qu'il existe $1 \leq j \leq n$ tel que $\overline{a_{j}}=\overline{1}$. Si $\overline{\overline{b_{j}}}=\overline{\overline{1}}$  ou $\overline{\overline{-1}}$ alors les deux solutions sont simultanément réductibles. Donc, $\overline{\overline{b_{j}}}=\overline{\overline{0}}$. Si $\overline{a_{j+1}}=\overline{0}$ les deux solutions sont simultanément réductibles. Ainsi, $\overline{a_{j+1}}=\overline{1}$. Si $\overline{\overline{b_{j+1}}}=\overline{\overline{1}}$ ou $\overline{\overline{-1}}$ alors les deux solutions sont simultanément réductibles. Donc, $\overline{\overline{b_{j+1}}}=\overline{\overline{0}}$. En continuant ainsi, on voit que tous les $\overline{a_{j}}$ sont égaux à $\overline{1}$ et que tous les $\overline{\overline{b_{j}}}$ sont égaux à $\overline{\overline{0}}$. Comme $M_{1}(\overline{1})$ est d'ordre 3 dans $PSL_{2}(\mathbb{Z}/2\mathbb{Z})$ et $M_{1}(\overline{\overline{0}})$ est d'ordre 2 dans $PSL_{2}(\mathbb{Z}/3\mathbb{Z})$, $n$ est divisible par 6. De plus, si $n \geq 12$, les deux solutions sont simultanément réductibles. En revanche, si $n=6$ les deux solutions ne sont pas simultanément réductibles. En effet, pour réduire simultanément les deux solutions, il nous faudrait une solution de taille 3 de la forme $(\overline{\overline{x}},\overline{\overline{0}},\overline{\overline{y}})$ ou une solution de taille 4 de la forme $(\overline{x},\overline{1},\overline{1},\overline{y})$, ce qui n'existe pas.
\\
\\Si, pour tout $1 \leq j \leq n$, $\overline{a_{j}}=\overline{0}$. S'il existe $1 \leq k \leq n$ tel que $\overline{\overline{b_{k}}}=\overline{\overline{0}}$ alors les deux solutions sont simultanément réductibles. Donc, tous les $\overline{\overline{b_{j}}}$ sont égaux à $\pm \overline{\overline{1}}$. S'il existe $1 \leq k \leq n-1$ tel que $\overline{\overline{b_{k}}}=\overline{\overline{\epsilon}}$ et $\overline{\overline{b_{k+1}}}=\overline{\overline{-\epsilon}}$, avec $\epsilon \in \{-1, 1\}$. On a :
\[\left\{
    \begin{array}{ll}
        (\overline{a_{k+2}},\ldots,\overline{a_{n}},\overline{a_{1}},\ldots,\overline{a_{k}},\overline{a_{k+1}})=(\overline{a_{k+2}},\ldots,\overline{a_{n}},\overline{a_{1}},\ldots,\overline{a_{k-1}}) \oplus (\overline{0},\overline{0},\overline{0},\overline{0}); \\
        (\overline{\overline{b_{k+2}}},\ldots,\overline{\overline{b_{n}}},\overline{\overline{b_{1}}},\ldots,\overline{\overline{b_{k}}},\overline{\overline{b_{k+1}}})=(\overline{\overline{b_{k+2}-\epsilon}},\ldots,\overline{\overline{b_{n}}},\overline{\overline{b_{1}}},\ldots,\overline{\overline{b_{k-1}+\epsilon}}) \oplus (\overline{\overline{-\epsilon}},\overline{\overline{\epsilon}},\overline{\overline{-\epsilon}},\overline{\overline{\epsilon}}).
    \end{array}
\right. \\ \]

\noindent Donc, pour tout $1 \leq j \leq n$ $\overline{\overline{b_{j}}}=\overline{\overline{1}}$, ou, pour tout $1 \leq j \leq n$ $\overline{\overline{b_{j}}}=\overline{\overline{-1}}$. En procédant comme dans la preuve du théorème \ref{26}, on aboutit à $(\overline{a_{1}},\ldots,\overline{a_{n}})=(\overline{0},\overline{0},\overline{0},\overline{0},\overline{0},\overline{0})$ et $(\overline{\overline{b_{1}}},\ldots,\overline{\overline{b_{n}}})=\pm (\overline{\overline{1}},\overline{\overline{1}},\overline{\overline{1}},\overline{\overline{1}},\overline{\overline{1}},\overline{\overline{1}})$.

\end{proof}

\subsubsection{Cas de $\mathbb{Z}/2\mathbb{Z} \times \mathbb{Z}/4 \mathbb{Z}$}  

On va maintenant démontrer le résultat ci-dessous :

\begin{proposition}
\label{43}

Les $\lambda$-quiddités irréductibles sur $(\mathbb{Z}/2\mathbb{Z}) \times (\mathbb{Z}/4\mathbb{Z})$ sont de taille 3, 4 ou 6.

\end{proposition}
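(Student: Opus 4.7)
The plan is to apply Theorem~\ref{25}, which reduces the study of $\lambda$-quiddités on $(\mathbb{Z}/2\mathbb{Z})\times(\mathbb{Z}/4\mathbb{Z})$ to pairs $((\overline{a_{1}},\ldots,\overline{a_{n}}),(\widetilde{b_{1}},\ldots,\widetilde{b_{n}}))$ consisting of a $\lambda$-quiddité on $\mathbb{Z}/2\mathbb{Z}$ and one on $\mathbb{Z}/4\mathbb{Z}$. Since $\mathbb{Z}/2\mathbb{Z}$ has characteristic $2$, the common-$\epsilon$ condition from Theorem~\ref{25} is automatic, so irreducibility on the product is equivalent to the non-simultaneous reducibility of the pair. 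Sizes $n=3$ and $n=4$ yield irreducibles immediately from Lemma~\ref{35} combined with Theorem~\ref{41}~i) and~iii); and the pair $((\overline{1},\overline{1},\overline{1},\overline{1},\overline{1},\overline{1}),(\widetilde{0},\widetilde{0},\widetilde{0},\widetilde{0},\widetilde{0},\widetilde{0}))$, handled exactly as at the end of the proof of Theorem~\ref{26}, realizes an irreducible of size $6$.

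For $n\geq 5$ with $n\neq 6$, I plan to mimic the cyclic walk-through of Theorem~\ref{26} and Proposition~\ref{42}. Two observations, both consequences of the $\oplus$-structure and Lemma~\ref{35}, will organize the analysis: first, a simultaneous reduction by a common size-$3$ piece at position $j$ exists if and only if $\overline{a_{j}}=\overline{1}$ and $\widetilde{b_{j}}\in\{\widetilde{1},\widetilde{3}\}$; second, a simultaneous reduction by a common size-$4$ piece at positions $(j,j+1)$ exists if and only if $(\overline{a_{j}},\overline{a_{j+1}})\neq(\overline{1},\overline{1})$ and at least one of $\widetilde{b_{j}},\widetilde{b_{j+1}}$ lies in $\{\widetilde{0},\widetilde{2}\}$. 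This last condition stems from the classification of the size-$4$ $\lambda$-quiddités on $\mathbb{Z}/4\mathbb{Z}$ given by Lemma~\ref{35}, whose middle pair $(b_{2},b_{3})$ must satisfy $b_{2}b_{3}$ even.

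The case $n=5$ is handled cleanly: up to equivalence the only size-$5$ $\lambda$-quiddité on $\mathbb{Z}/2\mathbb{Z}$ is $(\overline{1},\overline{1},\overline{1},\overline{0},\overline{0})$. Forbidding the simultaneous size-$3$ reduction forces $\widetilde{b_{1}},\widetilde{b_{2}},\widetilde{b_{3}}\in\{\widetilde{0},\widetilde{2}\}$, while forbidding simultaneous size-$4$ reductions at the three cyclic positions where the $\mathbb{Z}/2\mathbb{Z}$-side does not show $(\overline{1},\overline{1})$ forces $\widetilde{b_{1}}$ and $\widetilde{b_{3}}$ to be odd; the contradiction eliminates all irreducibles of size $5$.

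For $n\geq 7$, the same walk-through continues: combining the two constraints above with the orders of $M_{1}(\overline{1})$ in $PSL_{2}(\mathbb{Z}/2\mathbb{Z})$ and of $M_{1}(\widetilde{b})$ in $PSL_{2}(\mathbb{Z}/4\mathbb{Z})$ for $\widetilde{b}\in\{\widetilde{0},\widetilde{2}\}$ rigidifies the admissible patterns, and the size-$6$ reductions that appeared in the proof of Theorem~\ref{26} become available to close the remaining cases. The main obstacle is the combinatorial explosion: because $\mathbb{Z}/4\mathbb{Z}$ admits more size-$3$ and size-$4$ irreducible $\lambda$-quiddités than $\mathbb{Z}/2\mathbb{Z}$ or $\mathbb{Z}/3\mathbb{Z}$, noticeably more local configurations $(\widetilde{b_{j}},\widetilde{b_{j+1}},\widetilde{b_{j+2}})$ must be inspected, but a periodicity argument analogous to the $n\geq 12$ case of Theorem~\ref{26} should terminate the enumeration and force simultaneous reducibility for every such $n$.
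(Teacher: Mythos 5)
Your setup is correct and matches the paper's strategy: reduce via Theorem~\ref{25} to pairs of quiddities on $\mathbb{Z}/2\mathbb{Z}$ and $\mathbb{Z}/4\mathbb{Z}$, note that the common-$\epsilon$ condition is automatic, and characterize simultaneous reductions of sizes $3$ and $4$ by the two local observations you state (both of which are right). Your treatment of $n=5$ is a valid, even slightly cleaner, variant of the paper's, and your size-$6$ witness is correct. The gap is the case $n\geq 7$, which you dispatch with ``the same walk-through continues'' and ``a periodicity argument \ldots should terminate the enumeration''. The walk-through only rigidifies the pair into two shapes: either the $\mathbb{Z}/2\mathbb{Z}$ side is all $\overline{1}$ and the $\mathbb{Z}/4\mathbb{Z}$ side takes values only in $\{\widetilde{0},\widetilde{2}\}$ (forcing $6\mid n$), or the $\mathbb{Z}/2\mathbb{Z}$ side is all $\overline{0}$ and the $\mathbb{Z}/4\mathbb{Z}$ side takes values only in $\{\widetilde{1},\widetilde{3}\}$ --- a second branch your sketch does not mention at all, since the size-$3$ constraint is vacuous there. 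In neither branch is the $\mathbb{Z}/4\mathbb{Z}$ component constant: unlike $\mathbb{Z}/2\mathbb{Z}\times\mathbb{Z}/2\mathbb{Z}$, the equation $(E_{4})$ has $2^{n-2}$ solutions supported on $\{\widetilde{0},\widetilde{2}\}$ (resp.\ on $\{\widetilde{1},\widetilde{3}\}$) of the relevant sizes, so the constant size-$6$ reductions ``from Theorem~\ref{26}'' are simply not available on that factor.

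What actually closes these cases in the paper is a completion property specific to $\mathbb{Z}/4\mathbb{Z}$: every $4$-tuple with entries in $\{\widetilde{0},\widetilde{2}\}$ (resp.\ in $\{\widetilde{1},\widetilde{3}\}$) is the middle block of a size-$6$ solution of $(E_{4})$ with entries in the same set --- established by exhibiting the $16+16$ such solutions and generalized in Proposition~\ref{44}. This is what lets one cut a size-$6$ piece out of the $\mathbb{Z}/4\mathbb{Z}$ component at an arbitrary position, matched with the constant size-$6$ piece on the $\mathbb{Z}/2\mathbb{Z}$ side, for every $n\geq 12$ in the first branch and every $n\geq 7$ in the second. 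No generic ``periodicity'' argument substitutes for it: for, say, $n=12$ there are over a thousand admissible all-odd solutions of $(E_{4})$ to dispose of, and without the completion property you have no candidate reducing piece. You need to identify and prove this lemma (or an equivalent) before your outline becomes a proof.
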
 

\begin{proof}

On notera, pour $a \in \mathbb{Z}$, $\overline{a}:=a+2\mathbb{Z}$ et $\overline{\overline{a}}:=a+4\mathbb{Z}$. Soient $n \geq 5$, $(\overline{a_{1}},\ldots,\overline{a_{n}})$ une $\lambda$-quiddité sur $\mathbb{Z}/2\mathbb{Z}$ et $(\overline{\overline{b_{1}}},\ldots,\overline{\overline{b_{n}}})$ une $\lambda$-quiddité sur $\mathbb{Z}/4\mathbb{Z}$. On suppose qu'elles ne sont pas simultanément réductibles.
\\
\\On suppose pour commencer qu'il existe $1 \leq j \leq n$ tel que $\overline{a_{j}}=\overline{1}$. Si $\overline{\overline{b_{j}}}=\overline{\overline{1}}$  ou $\overline{\overline{-1}}$ alors les deux solutions sont simultanément réductibles. Donc, $\overline{\overline{b_{j}}}=\overline{\overline{0}}$ ou $\overline{\overline{b_{j}}}=\overline{\overline{2}}$. Si $\overline{a_{j+1}}=\overline{0}$ les deux solutions sont simultanément réductibles. En effet, si $\overline{\overline{b_{j}}}=\overline{\overline{0}}$ on a :
\[\left\{
    \begin{array}{ll}
        (\overline{a_{j+2}},\ldots,\overline{a_{n}},\overline{a_{1}},\ldots,\overline{a_{j}},\overline{a_{j+1}})=(\overline{a_{j+2}+1},\ldots,\overline{a_{n}},\overline{a_{1}},\ldots,\overline{a_{j-1}}) \oplus (\overline{0},\overline{1},\overline{0},\overline{1}); \\
        (\overline{\overline{b_{j+2}}},\ldots,\overline{\overline{b_{n}}},\overline{\overline{b_{1}}},\ldots,\overline{\overline{b_{j}}},\overline{\overline{b_{j+1}}})=(\overline{\overline{b_{j+2}}},\ldots,\overline{\overline{b_{n}}},\overline{\overline{b_{1}}},\ldots,\overline{\overline{b_{j-1}+b_{j+1}}}) \oplus (\overline{\overline{-b_{j+1}}},\overline{\overline{0}},\overline{\overline{b_{j+1}}},\overline{\overline{0}}).
    \end{array}
\right. \\ \]

\noindent Si $\overline{\overline{b_{j}}}=\overline{\overline{2}}$ on a :
\[\left\{
    \begin{array}{ll}
        (\overline{a_{j+2}},\ldots,\overline{a_{n}},\overline{a_{1}},\ldots,\overline{a_{j}},\overline{a_{j+1}})=(\overline{a_{j+2}+1},\ldots,\overline{a_{n}},\overline{a_{1}},\ldots,\overline{a_{j-1}}) \oplus (\overline{0},\overline{1},\overline{0},\overline{1}); \\
        (\overline{\overline{b_{j+2}}},\ldots,\overline{\overline{b_{n}}},\overline{\overline{b_{1}}},\ldots,\overline{\overline{b_{j}}},\overline{\overline{b_{j+1}}})=(\overline{\overline{b_{j+2}-2}},\ldots,\overline{\overline{b_{n}}},\overline{\overline{b_{1}}},\ldots,\overline{\overline{b_{j-1}-b_{j+1}}}) \oplus (\overline{\overline{b_{j+1}}},\overline{\overline{2}},\overline{\overline{b_{j+1}}},\overline{\overline{2}}).
    \end{array}
\right. \\ \]

\noindent Ainsi, $\overline{a_{j+1}}=\overline{1}$. Si $\overline{\overline{b_{j+1}}}=\overline{\overline{1}}$ ou $\overline{\overline{-1}}$ alors les deux solutions sont simultanément réductibles. Donc, $\overline{\overline{b_{j+1}}}=\overline{\overline{0}}$ ou $\overline{\overline{b_{j}}}=\overline{\overline{2}}$. En continuant ainsi, on voit que tous les $\overline{a_{j}}$ sont égaux à $\overline{1}$ et que tous les $\overline{\overline{b_{j}}}$ sont égaux à $\overline{\overline{0}}$ ou $\overline{\overline{2}}$. 
\\
\\Or, une solution de $(E_{2})$ ne contenant que des $\overline{1}$ a une taille qui est un multiple de 3 et une solution de $(E_{4})$ ne contenant que $\overline{\overline{0}}$ ou $\overline{\overline{2}}$ a une taille paire. Donc, $n$ est divisible par 6. Si $n=6$ les deux solutions ne sont pas simultanément réductibles. En effet, pour réduire simultanément les deux solutions, il nous faudrait une solution de taille 3 de la forme $(\overline{\overline{x}},\overline{\overline{0}},\overline{\overline{y}})$ ou $(\overline{\overline{x}},\overline{\overline{2}},\overline{\overline{y}})$ ou une solution de taille 4 de la forme $(\overline{x},\overline{1},\overline{1},\overline{y})$, ce qui n'existe pas.
\\
\\Si $n \geq 12$. $(E_{4})$ a 16 solutions de taille 6 ne contenant que $\overline{\overline{0}}$ ou $\overline{\overline{2}}$ : $(\overline{\overline{0}},\overline{\overline{0}},\overline{\overline{0}},\overline{\overline{0}},\overline{\overline{0}},\overline{\overline{0}})$, $(\overline{\overline{0}},\overline{\overline{0}},\overline{\overline{0}},\overline{\overline{2}},\overline{\overline{0}},\overline{\overline{2}})$, $(\overline{\overline{0}},\overline{\overline{0}},\overline{\overline{2}},\overline{\overline{0}},\overline{\overline{2}},\overline{\overline{0}})$, $(\overline{\overline{0}},\overline{\overline{0}},\overline{\overline{2}},\overline{\overline{2}},\overline{\overline{2}},\overline{\overline{2}})$, $(\overline{\overline{0}},\overline{\overline{2}},\overline{\overline{0}},\overline{\overline{0}},\overline{\overline{0}},\overline{\overline{2}})$, $(\overline{\overline{0}},\overline{\overline{2}},\overline{\overline{0}},\overline{\overline{2}},\overline{\overline{0}},\overline{\overline{0}})$, $(\overline{\overline{0}},\overline{\overline{2}},\overline{\overline{2}},\overline{\overline{0}},\overline{\overline{2}},\overline{\overline{2}})$, $(\overline{\overline{0}},\overline{\overline{2}},\overline{\overline{2}},\overline{\overline{2}},\overline{\overline{2}},\overline{\overline{0}})$, $(\overline{\overline{2}},\overline{\overline{0}},\overline{\overline{0}},\overline{\overline{0}},\overline{\overline{2}},\overline{\overline{0}})$, $(\overline{\overline{2}},\overline{\overline{0}},\overline{\overline{0}},\overline{\overline{2}},\overline{\overline{2}},\overline{\overline{2}})$, $(\overline{\overline{2}},\overline{\overline{0}},\overline{\overline{2}},\overline{\overline{0}},\overline{\overline{0}},\overline{\overline{0}})$, $(\overline{\overline{2}},\overline{\overline{0}},\overline{\overline{2}},\overline{\overline{2}},\overline{\overline{0}},\overline{\overline{2}})$, $(\overline{\overline{2}},\overline{\overline{2}},\overline{\overline{0}},\overline{\overline{0}},\overline{\overline{2}},\overline{\overline{2}})$, $(\overline{\overline{2}},\overline{\overline{2}},\overline{\overline{0}},\overline{\overline{2}},\overline{\overline{2}},\overline{\overline{0}})$, $(\overline{\overline{2}},\overline{\overline{2}},\overline{\overline{2}},\overline{\overline{0}},\overline{\overline{0}},\overline{\overline{2}})$, $(\overline{\overline{2}},\overline{\overline{2}},\overline{\overline{2}},\overline{\overline{2}},\overline{\overline{0}},\overline{\overline{0}})$. 
\\
\\On peut réduire $(\overline{\overline{b_{1}}},\ldots,\overline{\overline{b_{n}}})$ avec une des solutions ci-dessus, puisque pour chaque 4-uplet $(\overline{\overline{x}},\overline{\overline{y}},\overline{\overline{z}},\overline{\overline{t}})$ ne contenant que $\overline{\overline{0}}$ ou $\overline{\overline{2}}$ il existe une solution de la forme $(\overline{\overline{u}},\overline{\overline{x}},\overline{\overline{y}},\overline{\overline{z}},\overline{\overline{t}},\overline{\overline{v}})$. Donc, il existe $\overline{\overline{u}}, \overline{\overline{v}} \in \{\overline{\overline{0}}, \overline{\overline{2}}\}$ tels que $(\overline{\overline{u}},\overline{\overline{b_{n-3}}},\overline{\overline{b_{n-2}}},\overline{\overline{b_{n-1}}},\overline{\overline{b_{n}}},\overline{\overline{v}})$ est une solution de $(E_{4})$. On a :
\[\left\{
    \begin{array}{ll}
        (\overline{a_{1}},\ldots,\overline{a_{n-4}},\overline{a_{n-3}},\overline{a_{n-2}},\overline{a_{n-1}},\overline{a_{n}})=(\overline{a_{1}+1},\overline{a_{2}},\ldots,\overline{a_{n-5}},\overline{a_{n-4}+1}) \oplus (\overline{1},\overline{1},\overline{1},\overline{1},\overline{1},\overline{1}); \\
        (\overline{\overline{b_{1}}},\ldots,\overline{\overline{b_{n-4}}},\overline{\overline{b_{n-3}}},\overline{\overline{b_{n-2}}},\overline{\overline{b_{n-1}}},\overline{\overline{b_{n}}})= (\overline{\overline{b_{1}-v}},\overline{\overline{b_{2}}},\ldots,\overline{\overline{b_{n-5}}},\overline{\overline{b_{n-4}-u}}) \oplus (\overline{\overline{u}},\overline{\overline{b_{n-3}}},\overline{\overline{b_{n-2}}},\overline{\overline{b_{n-1}}},\overline{\overline{b_{n}}},\overline{\overline{v}}).
    \end{array}
\right. \\ \]

\noindent On peut donc réduire simultanément $(\overline{a_{1}},\ldots,\overline{a_{n}})$ et $(\overline{\overline{b_{1}}},\ldots,\overline{\overline{b_{n}}})$. Donc, $n=6$.
\\
\\Si, pour tout $1 \leq j \leq n$, $\overline{a_{j}}=\overline{0}$. S'il existe $1 \leq k \leq n$ tel que $\overline{\overline{b_{k}}}=\overline{\overline{0}}$ ou $\overline{\overline{2}}$ alors les deux solutions sont simultanément réductibles. En effet, si $\overline{\overline{b_{k}}}=\overline{\overline{0}}$, on a :
\[\left\{
    \begin{array}{ll}
        (\overline{a_{k+2}},\ldots,\overline{a_{n}},\overline{a_{1}},\ldots,\overline{a_{k}},\overline{a_{k+1}})=(\overline{a_{k+2}},\ldots,\overline{a_{n}},\overline{a_{1}},\ldots,\overline{a_{k-1}}) \oplus (\overline{0},\overline{0},\overline{0},\overline{0}); \\
        (\overline{\overline{b_{k+2}}},\ldots,\overline{\overline{b_{n}}},\overline{\overline{b_{1}}},\ldots,\overline{\overline{b_{k}}},\overline{\overline{b_{k+1}}})=(\overline{\overline{b_{k+2}}},\ldots,\overline{\overline{b_{n}}},\overline{\overline{b_{1}}},\ldots,\overline{\overline{b_{k-1}+b_{k+1}}}) \oplus (\overline{\overline{-b_{k+1}}},\overline{\overline{0}},\overline{\overline{b_{k+1}}},\overline{\overline{0}}).
    \end{array}
\right. \\ \]

\noindent Si $\overline{\overline{b_{k}}}=\overline{\overline{2}}$ on a :
\[\left\{
    \begin{array}{ll}
        (\overline{a_{k+2}},\ldots,\overline{a_{n}},\overline{a_{1}},\ldots,\overline{a_{k}},\overline{a_{k+1}})=(\overline{a_{k+2}},\ldots,\overline{a_{n}},\overline{a_{1}},\ldots,\overline{a_{k-1}}) \oplus (\overline{0},\overline{0},\overline{0},\overline{0}); \\
        (\overline{\overline{b_{k+2}}},\ldots,\overline{\overline{b_{n}}},\overline{\overline{b_{1}}},\ldots,\overline{\overline{b_{k}}},\overline{\overline{b_{k+1}}})=(\overline{\overline{b_{k+2}-2}},\ldots,\overline{\overline{b_{n}}},\overline{\overline{b_{1}}},\ldots,\overline{\overline{b_{k-1}-b_{k+1}}}) \oplus (\overline{\overline{b_{k+1}}},\overline{\overline{2}},\overline{\overline{b_{k+1}}},\overline{\overline{2}}).
    \end{array}
\right. \\ \] 

\noindent Ainsi, tous les $\overline{\overline{b_{j}}}$ sont égaux à $\pm \overline{\overline{1}}$. Comme il n'existe pas de solution de $(E_{4})$ de taille 5 ne contenant que $\pm \overline{\overline{1}}$, on a $n \geq 6$. 
\\
\\Supposons $n \geq 7$. $(E_{4})$ a 16 solutions de taille 6 ne contenant que $\pm \overline{\overline{1}}$ : $(\overline{\overline{1}},\overline{\overline{1}},\overline{\overline{1}},\overline{\overline{1}},\overline{\overline{1}},\overline{\overline{1}})$, $(\overline{\overline{1}},\overline{\overline{1}},\overline{\overline{1}},\overline{\overline{-1}},\overline{\overline{-1}},\overline{\overline{-1}})$, $(\overline{\overline{1}},\overline{\overline{1}},\overline{\overline{-1}},\overline{\overline{1}},\overline{\overline{1}},\overline{\overline{-1}})$, $(\overline{\overline{1}},\overline{\overline{1}},\overline{\overline{-1}},\overline{\overline{-1}},\overline{\overline{-1}},\overline{\overline{1}})$, $(\overline{\overline{1}},\overline{\overline{-1}},\overline{\overline{1}},\overline{\overline{1}},\overline{\overline{-1}},\overline{\overline{1}})$, $(\overline{\overline{1}},\overline{\overline{-1}},\overline{\overline{1}},\overline{\overline{-1}},\overline{\overline{1}},\overline{\overline{-1}})$, $(\overline{\overline{-1}},\overline{\overline{-1}},\overline{\overline{-1}},\overline{\overline{-1}},\overline{\overline{-1}},\overline{\overline{-1}})$,
\\$(\overline{\overline{1}},\overline{\overline{-1}},\overline{\overline{-1}},\overline{\overline{-1}},\overline{\overline{1}},\overline{\overline{1}})$, $(\overline{\overline{-1}},\overline{\overline{1}},\overline{\overline{1}},\overline{\overline{1}},\overline{\overline{-1}},\overline{\overline{-1}})$, $(\overline{\overline{-1}},\overline{\overline{1}},\overline{\overline{-1}},\overline{\overline{1}},\overline{\overline{-1}},\overline{\overline{1}})$, $(\overline{\overline{-1}},\overline{\overline{1}},\overline{\overline{-1}},\overline{\overline{-1}},\overline{\overline{1}},\overline{\overline{-1}})$, $(\overline{\overline{-1}},\overline{\overline{-1}},\overline{\overline{1}},\overline{\overline{-1}},\overline{\overline{-1}},\overline{\overline{1}})$, 
\\$(\overline{\overline{-1}},\overline{\overline{-1}},\overline{\overline{-1}},\overline{\overline{1}},\overline{\overline{1}},\overline{\overline{1}})$, $(\overline{\overline{-1}},\overline{\overline{-1}},\overline{\overline{1}},\overline{\overline{1}},\overline{\overline{1}},\overline{\overline{-1}})$, $(\overline{\overline{1}},\overline{\overline{-1}},\overline{\overline{-1}},\overline{\overline{1}},\overline{\overline{-1}},\overline{\overline{-1}})$, $(\overline{\overline{-1}},\overline{\overline{1}},\overline{\overline{1}},\overline{\overline{-1}},\overline{\overline{1}},\overline{\overline{1}})$.
\\
\\On peut réduire $(\overline{\overline{b_{1}}},\ldots,\overline{\overline{b_{n}}})$ avec une des solutions ci-dessus, puisque pour chaque 4-uplet $(\overline{\overline{x}},\overline{\overline{y}},\overline{\overline{z}},\overline{\overline{t}})$ ne contenant que $\pm \overline{\overline{1}}$ il existe une solution de la forme $(\overline{\overline{u}},\overline{\overline{x}},\overline{\overline{y}},\overline{\overline{z}},\overline{\overline{t}},\overline{\overline{v}})$. Donc, il existe $\overline{\overline{u}}, \overline{\overline{v}} \in \{\overline{\overline{1}}, \overline{\overline{-1}}\}$ tels que $(\overline{\overline{u}},\overline{\overline{b_{n-3}}},\overline{\overline{b_{n-2}}},\overline{\overline{b_{n-1}}},\overline{\overline{b_{n}}},\overline{\overline{v}})$ est une solution de $(E_{4})$. On a :
\[\left\{
    \begin{array}{ll}
        (\overline{a_{1}},\ldots,\overline{a_{n-4}},\overline{a_{n-3}},\overline{a_{n-2}},\overline{a_{n-1}},\overline{a_{n}})=(\overline{a_{1}},\overline{a_{2}},\ldots,\overline{a_{n-5}},\overline{a_{n-4}}) \oplus (\overline{0},\overline{0},\overline{0},\overline{0},\overline{0},\overline{0}); \\
        (\overline{\overline{b_{1}}},\ldots,\overline{\overline{b_{n-4}}},\overline{\overline{b_{n-3}}},\overline{\overline{b_{n-2}}},\overline{\overline{b_{n-1}}},\overline{\overline{b_{n}}})= (\overline{\overline{b_{1}-v}},\overline{\overline{b_{2}}},\ldots,\overline{\overline{b_{n-5}}},\overline{\overline{b_{n-4}-u}}) \oplus (\overline{\overline{u}},\overline{\overline{b_{n-3}}},\overline{\overline{b_{n-2}}},\overline{\overline{b_{n-1}}},\overline{\overline{b_{n}}},\overline{\overline{v}}).
    \end{array}
\right. \\ \]

\noindent On peut donc réduire simultanément $(\overline{a_{1}},\ldots,\overline{a_{n}})$ et $(\overline{\overline{b_{1}}},\ldots,\overline{\overline{b_{n}}})$ ($n-4 \geq 3$). Donc, $n=6$. Si $n=6$ on a, en procédant comme précédemment, que les deux solutions ne sont pas simultanément réductibles.

\end{proof}

\begin{remark}

{\rm On ne précise pas la liste exhaustive des solutions irréductibles car cette dernière est assez longue. Toutefois, la preuve ci-dessus fournit les constructions à effectuer pour obtenir cette liste. Pour les $\lambda$-quiddités irréductibles de taille 3 et 4, il suffit d'appliquer le théorème \ref{25} et le lemme \ref{35}. Pour les $\lambda$-quiddités irréductibles de taille 6, on construit, grâce au théorème \ref{25}, les solutions avec $(\overline{1},\overline{1},\overline{1},\overline{1},\overline{1},\overline{1})$ et les solutions de $(E_{4})$ de taille 6 ne contenant que $\overline{\overline{0}}$ ou $\overline{\overline{2}}$ données dans la preuve, ou avec $(\overline{0},\overline{0},\overline{0},\overline{0},\overline{0},\overline{0})$ et les solutions de $(E_{4})$ de taille 6 ne contenant que $\pm \overline{\overline{1}}$ données dans la preuve. 
}
\end{remark}

Avant de passer à d'autres cas, on va généraliser les deux résultats sur les solutions de taille 6 de $(E_{4})$ utilisés dans la preuve. Ces derniers ne sont bien entendu pas reliés aux produits directs d'anneaux mais sont néanmoins intéressants à noter du fait de la configuration tout à fait atypique qui y est exposée.

\begin{proposition}
\label{44}

On se place sur $\mathbb{Z}/4\mathbb{Z}$. 
\\
\\i) Soit $n \geq 2$ un entier pair. Soit $(\overline{a_{1}},\ldots,\overline{a_{n}})$ un $n$-uplet ne contenant que $\overline{0}$ et $\overline{2}$. Il existe $(\overline{x},\overline{y}) \in \{\overline{0}, \overline{2}\}^{2}$ tel que $(\overline{x},\overline{a_{1}},\ldots,\overline{a_{n}},\overline{y})$ est une solution de $(E_{4})$. En particulier :
\begin{itemize}
\item pour tout $n$-uplet $(\overline{a_{1}},\ldots,\overline{a_{n}})$ ne contenant que $\overline{0}$ et $\overline{2}$, $K_{n}(\overline{a_{1}},\ldots,\overline{a_{n}})=\pm \overline{1}$;
\item il y a $2^{n-2}$ solutions de $(E_{4})$ de taille $n$ ne contenant que $\overline{0}$ et $\overline{2}$.
\end{itemize}

\noindent ii) Soit $n \equiv 1 [3]$. Soit $(\overline{a_{1}},\ldots,\overline{a_{n}})$ un $n$-uplet ne contenant que $\pm \overline{1}$. Il existe $(\overline{x},\overline{y}) \in \{\overline{-1}, \overline{1}\}^{2}$ tel que $(\overline{x},\overline{a_{1}},\ldots,\overline{a_{n}},\overline{y})$ est une solution de $(E_{4})$. En particulier :
\begin{itemize}
\item pour tout $n \equiv 1 [3]$ et pour tout $n$-uplet $(\overline{a_{1}},\ldots,\overline{a_{n}})$ ne contenant que $\pm \overline{1}$, $K_{n}(\overline{a_{1}},\ldots,\overline{a_{n}})=\pm \overline{1}$;
\item pour tout $n \equiv 0 [3]$, il y a $2^{n-2}$ solutions de $(E_{4})$ de taille $n$ ne contenant que $\overline{1}$ et $\overline{-1}$.
\end{itemize}

\end{proposition}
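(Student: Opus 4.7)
The plan is to reduce the entries of $M_{n}$ modulo $2$, exploit the fact that $M_{1}(\overline{0})$ (resp.\ $M_{1}(\overline{1})$) has order $2$ (resp.\ order $3$) in $SL_{2}(\mathbb{Z}/2\mathbb{Z})$, and then lift back to $\mathbb{Z}/4\mathbb{Z}$ via the continuant recurrence.

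The first step is to pin down the shape of $M_{n}$ modulo $4$. In case i) each $\overline{a_{i}}$ reduces to $\overline{0}$ mod $2$ and $M_{1}(\overline{0})^{2}\equiv Id \pmod{2}$, so for even $n$ one has $M_{n}\equiv Id \pmod{2}$; hence the diagonal coefficients $K_{n}$ and $-K_{n-2}(\overline{a_{2}},\ldots,\overline{a_{n-1}})$ lie in $\{\overline{1},\overline{-1}\}$ while the antidiagonal coefficients $K_{n-1}$ lie in $\{\overline{0},\overline{2}\}$. This already establishes the first bullet of i). In case ii) each $\overline{a_{i}}$ reduces to $\overline{1}$ and $M_{1}(\overline{1})^{3}\equiv Id \pmod{2}$, so $M_{n}\equiv Id \pmod{2}$ for $n\equiv 0\pmod{3}$ and $M_{n}\equiv M_{1}(\overline{1})\pmod{2}$ for $n\equiv 1\pmod{3}$; in both cases $K_{n}$ is odd and hence lies in $\{\overline{1},\overline{-1}\}$, which gives the first bullet of ii).

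The second step is to construct $\overline{x}$ and $\overline{y}$. The continuant recurrence in the first variable gives
\[K_{n+1}(\overline{x},\overline{a_{1}},\ldots,\overline{a_{n}}) = \overline{x}\,K_{n}(\overline{a_{1}},\ldots,\overline{a_{n}}) - K_{n-1}(\overline{a_{2}},\ldots,\overline{a_{n}}).\]
Since $K_{n}$ is a unit by the first step, the equation $\overline{x}\,K_{n}=K_{n-1}(\overline{a_{2}},\ldots,\overline{a_{n}})$ has a unique solution $\overline{x}$; applying the first step to the $(n-1)$-subtuple shows that $K_{n-1}\in\{\overline{0},\overline{2}\}$ in case i) (where $n-1$ is odd) and $K_{n-1}\in\{\overline{1},\overline{-1}\}$ in case ii) (where $n-1\equiv 0\pmod{3}$), so $\overline{x}$ lies in the prescribed subset; the symmetric recurrence in the last variable produces $\overline{y}$. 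By construction both antidiagonal entries of $M_{n+2}(\overline{x},\overline{a_{1}},\ldots,\overline{a_{n}},\overline{y})$ vanish; its diagonal entries lie in $\{\overline{1},\overline{-1}\}$ by the first step applied to the extended tuple (whose length is even in case i) and $\equiv 0\pmod{3}$ in case ii)), and their product equals the determinant $\overline{1}$, so they coincide and the matrix equals $\pm Id$. The map $(\overline{a_{1}},\ldots,\overline{a_{n-2}})\mapsto(\overline{x},\overline{a_{1}},\ldots,\overline{a_{n-2}},\overline{y})$ is then a bijection from the set of $(n-2)$-tuples with entries in $\{\overline{0},\overline{2}\}$ (resp.\ $\{\overline{1},\overline{-1}\}$) onto the set of solutions of $(E_{4})$ of size $n$ with entries in that subset (its inverse extracts the middle entries), giving the announced count $2^{n-2}$.

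The main difficulty lies in keeping track of the parity (resp.\ mod-$3$ class) of the three lengths $n$, $n-1$ and $n+2$ involved: the argument works precisely because in each of the two regimes all three lengths fall into the ``unit diagonal'' congruence class identified in the first step, which is what both makes $K_{n}$ invertible and forces $K_{n-1}$ into the correct subset.
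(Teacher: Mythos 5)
Your proof is correct, but it follows a genuinely different route from the paper. The paper proves the extension statement by induction on the length: it lists the size-$4$ (resp.\ size-$5$) solutions explicitly, and in the inductive step modifies the last entries of the tuple, applies the induction hypothesis, and glues back a size-$4$ solution with the operation $\oplus$; the identity $K_{n}=\pm\overline{1}$ is then deduced \emph{a posteriori} from the existence of the extension, and the count comes from the uniqueness of $(\overline{x},\overline{y})$ expressed via the continuant identities $\overline{a_{1}}=-\overline{\epsilon}K_{n-3}(\overline{a_{3}},\ldots,\overline{a_{n-1}})$, $\overline{a_{n}}=-\overline{\epsilon}K_{n-3}(\overline{a_{2}},\ldots,\overline{a_{n-2}})$. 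You instead establish $K_{n}=\pm\overline{1}$ \emph{first}, by reducing modulo $2$ and using the orders of $M_{1}(\overline{0})$ and $M_{1}(\overline{1})$ in $SL_{2}(\mathbb{Z}/2\mathbb{Z})$, and then solve the linear equations $K_{n+1}(\overline{x},\overline{a_{1}},\ldots,\overline{a_{n}})=\overline{0}$ and $K_{n+1}(\overline{a_{1}},\ldots,\overline{a_{n}},\overline{y})=\overline{0}$ directly, the invertibility of $K_{n}$ giving existence and uniqueness at once; the membership of $\overline{x},\overline{y}$ in the right subset again follows from the mod-$2$ reduction applied to the $(n-1)$-subtuples, and the sign analysis of the diagonal closes the argument. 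Your approach buys a conceptual explanation of the ``atypical configuration'' (the continuant is forced to be a unit by the parity, resp.\ mod-$3$, structure), treats both cases i) and ii) uniformly, and avoids having to exhibit the eight size-$5$ solutions the paper lists for case ii); the paper's induction is more elementary in that it stays entirely within the $\oplus$-calculus used throughout the article and produces the extensions constructively. Both arguments rest on the same uniqueness-of-extension fact for the counting, so the bijection giving $2^{n-2}$ is essentially identical.
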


\begin{proof}

i) On pose $n=2m$ et on procède par récurrence sur $m$. Si $m=1$ alors le résultat est vrai puisque $(\overline{0},\overline{0},\overline{0},\overline{0})$, $(\overline{0},\overline{2},\overline{0},\overline{2})$, $(\overline{2},\overline{0},\overline{2},\overline{0})$ et $(\overline{2},\overline{2},\overline{2},\overline{2})$ sont des solutions de $(E_{4})$.
\\
\\On suppose qu'il existe un $m \in \mathbb{N}^{*}$ tel que pour tout $2m$-uplet $(\overline{a_{1}},\ldots,\overline{a_{2m}})$ ne contenant que $\overline{0}$ et $\overline{2}$ il existe $(\overline{x},\overline{y}) \in \{\overline{0}, \overline{2}\}^{2}$ tel que $(\overline{x},\overline{a_{1}},\ldots,\overline{a_{n}},\overline{y})$ est une solution de $(E_{4})$. Soit $(\overline{a_{1}},\ldots,\overline{a_{2m+2}})$ un $(2m+2)$-uplet ne contenant que $\overline{0}$ et $\overline{2}$. $(\overline{a_{1}},\ldots,\overline{a_{2m-1}},\overline{a_{2m}-a_{2m+2}})$ ne contient que $\overline{0}$ et $\overline{2}$. Il existe $(\overline{u},\overline{v}) \in \{\overline{0}, \overline{2}\}^{2}$ tel que $(\overline{u},\overline{a_{1}},\ldots,\overline{a_{2m-1}},\overline{a_{2m}-a_{2m+2}},\overline{v})$ est une solution de $(E_{4})$ (hypothèse de récurrence). Donc, $(\overline{v},\overline{u},\overline{a_{1}},\ldots,\overline{a_{2m}-a_{2m+2}})$ est une solution de $(E_{4})$ (invariance par permutations circulaires). De plus, $(\overline{a_{2m+2}},\overline{a_{2m+1}},\overline{a_{2m+2}},\overline{a_{2m+1}})$ est une solution de $(E_{4})$. Ainsi, le $(2m+2)$-uplet ci-dessous est une solution de $(E_{4})$:

\[(\overline{v},\overline{u},\overline{a_{1}},\ldots,\overline{a_{2m}-a_{2m+2}}) \oplus (\overline{a_{2m+2}},\overline{a_{2m+1}},\overline{a_{2m+2}},\overline{a_{2m+1}})=(\overline{v+a_{2m+1}},\overline{u},\overline{a_{1}},\ldots,\overline{a_{2m}},\overline{a_{2m+1}},\overline{a_{2m+2}}).\]

\noindent Donc, $(\overline{u},\overline{a_{1}},\ldots,\overline{a_{2m}},\overline{a_{2m+1}},\overline{a_{2m+2}},\overline{v+a_{2m+1}})$ est une solution de $(E_{4})$ et $\overline{u}, \overline{v+a_{2m+1}} \in \{\overline{0}, \overline{2}\}$. Par récurrence, le résultat est démontré.
\\
\\En utilisant la formule décrivant $M_{2m}(\overline{a_{1}},\ldots,\overline{a_{n}})$ avec des continuants, on déduit de ce qui précède que pour tout $2m$-uplet $(\overline{a_{1}},\ldots,\overline{a_{2m}})$ ne contenant que $\overline{0}$ et $\overline{2}$, $K_{2m}(\overline{a_{1}},\ldots,\overline{a_{2m}})=\pm \overline{1}$. 
\\
\\De plus, si $M_{n}(\overline{a_{1}},\ldots,\overline{a_{n}})=\overline{\epsilon} Id$ ($n \geq 4$), on a $\overline{a_{1}}=-\overline{\epsilon} K_{n-3}(\overline{a_{3}},\ldots,\overline{a_{n-1}})$ et $\overline{a_{n}}=-\overline{\epsilon} K_{n-3}(\overline{a_{2}},\ldots,\overline{a_{n-2}})$. Ainsi, pour chaque $2m$-uplet $(\overline{a_{1}},\ldots,\overline{a_{2m}})$ ne contenant que $\overline{0}$ et $\overline{2}$, il existe une unique solution de taille $2m+2$ de la forme $(\overline{x},\overline{a_{1}},\ldots,\overline{a_{2m}},\overline{y})$. Donc, si $\mathcal{E}_{2m+2}$ désigne l'ensemble des solutions de $(E_{4})$ de taille $2m+2$ ne contenant que $\overline{0}$ et $\overline{2}$, on a que
\[\begin{array}{ccccc} 
f_{2m+2} & : & \mathcal{E}_{2m+2} & \longrightarrow & \{\overline{0},\overline{2}\}^{2m} \\
 & & (\overline{x},\overline{a_{1}},\ldots,\overline{a_{2m}},\overline{y})  & \longmapsto & (\overline{a_{1}},\ldots,\overline{a_{2m}})
\end{array}\]

\noindent est une bijection. En particulier, il y a $2^{2m}$ solutions de $(E_{4})$ de taille $2m+2$ ne contenant que $\overline{0}$ et $\overline{2}$.
\\
\\ii) On dispose des solution suivantes de $(E_{4})$ : $(\overline{0},\overline{1},\overline{1},\overline{1},\overline{0})$, $(\overline{2},\overline{-1},\overline{1},\overline{-1},\overline{2})$, $(\overline{2},\overline{1},\overline{-1},\overline{1},\overline{2})$, $(\overline{0},\overline{1},\overline{-1},\overline{-1},\overline{2})$, $(\overline{0},\overline{-1},\overline{1},\overline{1},\overline{2})$, $(\overline{2},\overline{1},\overline{1},\overline{-1},\overline{0})$, $(\overline{2},\overline{-1},\overline{-1},\overline{1},\overline{0})$, $(\overline{0},\overline{-1},\overline{-1},\overline{-1},\overline{0})$. En utilisant ces dernières et le fait que $\pm \overline{1}+\overline{0}$ et $\pm \overline{1}+\overline{2}$ sont dans $\{-\overline{1}, \overline{1}\}$, la preuve du point ii) est similaire à celle de i).

\end{proof}
		
\subsubsection{Quelques éléments obtenus informatiquement} 	

Pour un anneau commutatif unitaire $A$, on note $\ell_{A}$ la taille maximale des $\lambda$-quiddités irréductibles sur $A$. Dans \cite{CM}, on avait obtenu, avec un programme informatique, un certain nombre de résultats sur $\ell_{A}$ dans les cas des anneaux $\mathbb{Z}/N\mathbb{Z}$ (voir \cite{CM} section 5.2 et Annexe B). En combinant ces derniers avec le corollaire \ref{33} et le lemme chinois, on a : 

\medskip

\begin{center}
\begin{tabular}{|c|c|c|c|c|}
\hline
$A$     & $\mathbb{Z}/2\mathbb{Z} \times \mathbb{Z}/5\mathbb{Z}$ & $\mathbb{Z}/3\mathbb{Z} \times \mathbb{Z}/4\mathbb{Z}$ & $\mathbb{Z}/2\mathbb{Z} \times \mathbb{Z}/7\mathbb{Z}$ & $\mathbb{Z}/3\mathbb{Z} \times \mathbb{Z}/5\mathbb{Z}$  \rule[-7pt]{0pt}{18pt} \\
\hline
$\ell_{A}$ & 12 & 15 & 20 & 26 
\rule[-7pt]{0pt}{18pt} \\
  \hline
\end{tabular}
\end{center}

\noindent Avec un autre programme informatique, on a obtenu que si $A=\mathbb{Z}/3\mathbb{Z} \times \mathbb{Z}/3\mathbb{Z}$ alors $\ell_{A}=12$.

\medskip
					
\subsection{$\lambda$-quiddités sur des produits direct et anneaux de caractéristique $0$}

L'objectif de cette section est de démontrer le théorème \ref{28}. Pour cela, on va donner plusieurs résultats intermédiaires. On commence par le lemme suivant :

\begin{lemma}
\label{45}

Soient $A$ un anneau commutatif unitaire et $n \geq 4$. $M_{n}(1_{A},n_{A}-2_{A},1_{A},2_{A},\ldots,2_{A})=-Id$.

\end{lemma}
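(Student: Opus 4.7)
The plan is to exploit the factorization
\[M_{n}(1_A, n_A - 2_A, 1_A, 2_A, \ldots, 2_A) \;=\; M_{1}(2_A)^{n-3} \cdot M_{1}(1_A) M_{1}(n_A - 2_A) M_{1}(1_A),\]
which follows immediately from the definition of $M_n$ applied to the tuple in question (the $n-3$ factors $M_1(2_A)$ correspond to the trailing $2_A$s, since in the paper's convention the matrix associated with $a_n$ sits on the far left). The key observation is that $M_1(2_A) = I + N$ with
\[N = \begin{pmatrix} 1_A & -1_A \\ 1_A & -1_A \end{pmatrix}, \qquad N^2 = 0.\]
This nilpotency gives the closed form $M_1(2_A)^{k} = I + k_A N$, that is
\[M_1(2_A)^{k} \;=\; \begin{pmatrix} (k+1)_A & -k_A \\ k_A & -(k-1)_A \end{pmatrix}\]
for every $k \geq 0$; evaluating at $k = n-3$ handles the first factor.

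Next I would compute the three-matrix product $M_1(1_A) M_1(n_A - 2_A) M_1(1_A)$ by hand. A straightforward two-step multiplication yields
\[M_1(1_A) M_1(n_A - 2_A) M_1(1_A) \;=\; \begin{pmatrix} (n-4)_A & -(n-3)_A \\ (n-3)_A & -(n-2)_A \end{pmatrix}.\]
Multiplying this against the matrix $M_1(2_A)^{n-3}$ obtained above, the four entries reduce to elementary expressions in $(n-2)_A (n-4)_A$ and $(n-3)_A^2$. The off-diagonal entries vanish by direct cancellation, while both diagonal entries collapse to $(n-2)_A(n-4)_A - (n-3)_A^2 = -1_A$, via the identity $(k+1)(k-1) - k^2 = -1$ applied with $k = n-3$ (valid in any commutative unital ring). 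This gives $M_n(1_A, n_A - 2_A, 1_A, 2_A, \ldots, 2_A) = -Id$, as required.

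There is no conceptual obstacle: the argument is entirely a matrix computation. The points needing care are the order of the factors (the paper puts $a_n$ on the far left), the validity of the closed form for $M_1(2_A)^k$ in an arbitrary commutative unital ring (guaranteed by $N^2=0$, which is an identity in $M_2(\mathbb{Z})$ and therefore holds after reduction to $A$), and the mild boundary case $n=4$, where the exponent $n-3=1$ is handled correctly by the same formulas and the lemma reduces to the already-known solution $(1_A,2_A,1_A,2_A)$ from Lemma~\ref{35}.
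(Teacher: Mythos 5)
Your proof is correct and follows essentially the same route as the paper's algebraic proof of this lemma: the same factorization $M_{n}(1_A,n_A-2_A,1_A,2_A,\ldots,2_A)=M_{n-3}(2_A,\ldots,2_A)\,M_{1}(1_A)M_{1}(n_A-2_A)M_{1}(1_A)$, followed by the same explicit matrix multiplication, with the same cancellation $(n-2)_A(n-4)_A-(n-3)_A^2=-1_A$. The only (minor, and rather neat) difference is that you obtain the closed form for $M_{1}(2_A)^{n-3}$ from the nilpotency of $M_{1}(2_A)-Id$, whereas the paper derives it (lemme \ref{47}) by induction on the continuants $K_{n}(2_A,\ldots,2_A)$; the paper also gives a second, geometric proof via quiddities of triangulations (proposition \ref{46}), which your argument does not need.
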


On va donner deux preuves de ce résultat, une géométrique et une algébrique. Pour la preuve géométrique, on va utiliser le résultat suivant qui est la généralisation à un anneau quelconque du théorème de Conway-Coxeter (voir par exemple \cite{M} Théorème 2 pour une formulation matricielle de ce dernier).

\begin{proposition}
\label{46}

On considère la triangulation d'un polygone convexe $P$ à $n$ sommets par des diagonales ne se coupant qu'aux sommets. À chaque sommet de $P$ on associe un élément $c=k_{A}$ où $k$ est le nombre de triangles utilisant le sommet considéré. On parcourt les sommets, à partir de n'importe lequel d'entre eux, dans le sens horaire ou le sens trigonométrique, pour obtenir le $n$-uplet $(c_{1},\ldots,c_{n})$. Ce $n$-uplet est la quiddité de la triangulation de $P$. On a :
\[M_{n}(c_{1},\ldots,c_{n})=-Id.\]

\end{proposition}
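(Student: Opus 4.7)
The plan is to reduce to the classical Conway-Coxeter theorem over $\mathbb{Z}$ by transporting along the canonical unital ring morphism. For any unital commutative ring $A$ there is a unique such morphism $\varphi : \mathbb{Z} \to A$, and by construction $\varphi(k) = k_A$ for every $k \in \mathbb{Z}$. This is precisely the morphism that sends the combinatorial count of triangles at a vertex (a positive integer) to the element $c_i = k_A$ appearing in the statement.

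Concretely, I would first consider the same triangulation viewed purely combinatorially: it defines an $n$-tuple $(k_1,\ldots,k_n) \in (\mathbb{N}^{*})^{n}$ of integers, where $k_i$ is the number of triangles at the $i$-th vertex. The classical Conway-Coxeter theorem (as recalled in \cite{M} Théorème 2 in its matrix formulation) gives $M_n(k_1,\ldots,k_n) = -Id$ in $M_2(\mathbb{Z})$. Applying Proposition \ref{31} to $\varphi$, one obtains $M_n(\varphi(k_1),\ldots,\varphi(k_n)) = \varphi(-1_{\mathbb{Z}})\, Id = -1_A\, Id = -Id$ in $M_2(A)$. Since $\varphi(k_i) = (k_i)_A = c_i$ by the very definition of the quiddity in the generalized setting, this is exactly the desired identity.

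An alternative, self-contained route avoiding a direct citation of the integer case would be an induction on $n \geq 3$. The base case $n=3$ is the trivial triangulation with quiddity $(1_A,1_A,1_A)$, and $M_3(1_A,1_A,1_A) = -Id$ is part of Lemma \ref{35}. For $n \geq 4$, every triangulation of a convex $n$-gon has at least two ears, i.e.\ two vertices $v_i$ belonging to a single triangle, hence with $c_i = 1_A$. Removing such an ear yields a triangulation of a convex $(n-1)$-gon whose quiddity is $(c_1,\ldots,c_{i-1}-1_A,c_{i+1}-1_A,\ldots,c_n)$ up to a cyclic permutation. After a suitable element of $D_n$, the original quiddity is then equal to this shorter tuple $\oplus\, (1_A,1_A,1_A)$, and the identity $M_n = -Id$ follows from $M_{n-1} = -Id$ by the multiplicativity of $\oplus$ on $\lambda$-quiddités recalled just after Définition \ref{22}.

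The main obstacle is essentially to articulate cleanly the matching between the combinatorial quiddity over $\mathbb{Z}$ and its image in $A$ under $\varphi$; once this is observed, Proposition \ref{31} does all the work. In the inductive variant, the only nontrivial combinatorial input is the existence of an ear (a standard fact about triangulations of convex polygons), together with the bookkeeping needed to bring the ear to the correct position so as to apply $\oplus$. Either way, the heart of the statement is the classical integer theorem, and the generalization to an arbitrary commutative unital ring is purely formal.
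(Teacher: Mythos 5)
Your two arguments are both correct, and your ``alternative'' inductive route is exactly the proof given in the paper: induction on $n$, base case $(1_A,1_A,1_A)$ with $M_3(1_A,1_A,1_A)=-Id$, existence of an exterior triangle (ear) giving a vertex with $c_i=1_A$, removal of that ear producing the quiddity $(c_1,\ldots,c_{i-1}-1_A,c_{i+1}-1_A,\ldots,c_n)$ of an $(n-1)$-gon, and conclusion via the relation $(c_{i+1},\ldots,c_n,c_1,\ldots,c_i)=(c_{i+1}-1_A,\ldots,c_{i-1}-1_A)\oplus(1_A,1_A,1_A)$ together with the sign rule $M_{n+m-2}=-\alpha\beta\,Id$ recalled after the définition \ref{22}. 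Your primary route, via the unique unital morphism $\varphi\colon\mathbb{Z}\to A$ and the proposition \ref{31}, is genuinely different from what the paper does and is arguably more economical: it reduces everything to the classical integer Conway--Coxeter theorem and one formal transport. Two small points to make it airtight: the proposition \ref{31} as \emph{stated} only asserts that the image is a $\lambda$-quiddité (i.e.\ $M_n=\pm Id$), so you need to invoke what its \emph{proof} actually shows, namely $M_n(\varphi(k_1),\ldots,\varphi(k_n))=\varphi(\epsilon)\,Id$, to get the precise sign $-Id$; and you are then leaning on the integer-case theorem as an external input, whereas the paper's induction is self-contained (which is presumably why the author chose it, ``afin d'avoir une présentation complète''). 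What your morphism argument buys is a clean separation of the combinatorics (done once, over $\mathbb{Z}$) from the algebra (a one-line functoriality); what the paper's induction buys is independence from any citation and a template reused elsewhere in the text.
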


\noindent Afin d'avoir une présentation complète, on va en redonner la preuve.

\begin{proof}

On procède par récurrence sur $n$. Si $n=3$, la quiddité de la triangulation de $P$ est $(c_{1},\ldots,c_{n})=(1_{A},1_{A},1_{A})$ et $M_{n}(c_{1},\ldots,c_{n})=-Id$. On suppose qu'il existe un $n \geq 3$ tel que toutes les quiddités des triangulations des polygones convexes à $n$ sommets vérifient la condition souhaitée. Soit $P$ un polygone convexe à $n+1$ sommets. On considère une triangulation de $P$ par des diagonales ne se coupant qu'aux sommets. On choisit un sommet que l'on numérote 1 puis on numérote les autres en suivant le sens trigonométrique. On note $(c_{1},\ldots,c_{n+1})$ la quiddité de cette triangulation (obtenue avec la numérotation précédente). Cette dernière possède un triangle extérieur, c'est-à-dire un triangle dont deux côtés sont des côtés de $P$. Ainsi, il existe $i$ dans $[\![1;n+1]\!]$ tel que $c_{i}=1_{A}$. En supprimant ce triangle, on obtient une triangulation dont la quiddité est $(c_{1},\ldots,c_{i-1}-1_{A},c_{i+1}-1_{A},\ldots,c_{n})$. Par récurrence, on a l'égalité : $M_{n}(c_{1},\ldots,c_{i-1}-1_{A},c_{i+1}-1_{A},\ldots,c_{n})=-Id$. De plus, on a les relations suivantes : $(c_{1},\ldots,c_{n}) \sim (c_{i+1},\ldots,c_{n},c_{1},\ldots,c_{i})=(c_{i+1}-1_{A},\ldots,c_{n},c_{1},\ldots,c_{i-1}-1_{A}) \oplus (1_{A},1_{A},1_{A})$ et $M_{3}(1_{A},1_{A},1_{A})=-Id$. Donc, on a $M_{n+1}(c_{1},\ldots,c_{n+1})=-Id$. Par récurrence, le résultat est démontré.

\end{proof}

\begin{proof}[Preuve géométrique du lemme \ref{45}]

On considère un polygone convexe à $n$ sommets. On choisit un sommet que l'on numérote 1 et on numérote les autres sommets en suivant le sens trigonométrique. Pour tout $i \in [\![4;n]\!]$ on relie le sommet $i$ au sommet 2. Comme une triangulation d'un polygone convexe à $n$ sommets contient $n-2$ triangles, on obtient une triangulation de quiddité :
\[(c_{1},\ldots,c_{n})=(1_{A},n_{A}-2_{A},1_{A},2_{A},\ldots,2_{A}).\]
\noindent Par la proposition \ref{46}, le résultat est démontré. Si on note $p_{n}=n_{A}-2_{A}$, cela donne :
$$
\shorthandoff{; :!?}
\xymatrix @!0 @R=0.40cm @C=0.5cm
{
&&p_{n}\ar@{-}[lldd]\ar@{-}[dddddd]\ar@{-}[rrrrdddd] \ar@{-}[rr]&&1_{A}\ar@{-}[rrdd]&
\\
&&&
\\
1_{A}\ar@{-}[dd]&&&&&& 2_{A} \ar@{-}[dd]\ar@{-}[lllluu]
\\
&&&&
\\
2_{A}\ar@{-}[rruuuu]&&&&&& 2_{A}
\\
&&&
\\
&&2_{A}\ar@{.}[rr] \ar@{-}[lluu] &&2_{A} \ar@{-}[rruu]\ar@{-}[lluuuuuu]
}
$$
\end{proof}

On va maintenant donner une preuve algébrique dont l'ingrédient principal est le lemme classique ci-dessous (voir par exemple \cite{M2} lemme 3.19): 

\begin{lemma}
\label{47}

Soit $n \in \mathbb{N}$. $K_{n}(2_{A},\ldots,2_{A})=n_{A}+1_{A}$ et, si $n \geq 1$, $M_{n}(2_{A},\ldots,2_{A})=\begin{pmatrix}
   n_{A}+1_{A}   & -n_{A} \\
   n_{A} & -n_{A}+1_{A}
\end{pmatrix}$.

\end{lemma}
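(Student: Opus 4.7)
The plan is to prove the continuant formula $K_{n}(2_{A},\ldots,2_{A})=n_{A}+1_{A}$ by induction on $n$, and then to read off the matrix formula as an immediate consequence of the identity
\[M_{n}(a_{1},\ldots,a_{n})=\begin{pmatrix} K_{n}(a_{1},\ldots,a_{n}) & -K_{n-1}(a_{2},\ldots,a_{n}) \\ K_{n-1}(a_{1},\ldots,a_{n-1}) & -K_{n-2}(a_{2},\ldots,a_{n-1}) \end{pmatrix}\]
recalled at the start of Section 3, together with the translation invariance of the continuant when all entries are equal to $2_{A}$.

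First I would check the two base cases using the conventions $K_{-1}=0_{A}$, $K_{0}=1_{A}$. For $n=0$, the empty continuant equals $1_{A}=0_{A}+1_{A}$, and for $n=1$ one has $K_{1}(2_{A})=2_{A}=1_{A}+1_{A}$. These match the desired formula. For the inductive step with $n\geq 2$, I would apply the standard three-term recurrence $K_{n}(a_{1},\ldots,a_{n})=a_{n}K_{n-1}(a_{1},\ldots,a_{n-1})-K_{n-2}(a_{1},\ldots,a_{n-2})$ with all $a_{i}=2_{A}$; invoking the induction hypothesis yields
\[K_{n}(2_{A},\ldots,2_{A})=2_{A}(n_{A}-1_{A}+1_{A})-((n_{A}-2_{A})+1_{A})=2n_{A}-n_{A}+1_{A}=n_{A}+1_{A},\]
which closes the induction.

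For the matrix part, since every argument equals $2_{A}$, the continuants appearing in the matrix formula depend only on their length, and the first step of the induction gives all three values we need. Substituting $K_{n}(2_{A},\ldots,2_{A})=n_{A}+1_{A}$, $K_{n-1}(2_{A},\ldots,2_{A})=n_{A}$ and $K_{n-2}(2_{A},\ldots,2_{A})=(n-2)_{A}+1_{A}=n_{A}-1_{A}$ into the formula above yields precisely
\[M_{n}(2_{A},\ldots,2_{A})=\begin{pmatrix} n_{A}+1_{A} & -n_{A} \\ n_{A} & -n_{A}+1_{A} \end{pmatrix}.\]
There is no real obstacle here: the only point to watch is bookkeeping with the expression $-(n_{A}-1_{A})=-n_{A}+1_{A}$ in the bottom-right entry, and the verification that the base cases are consistent with the conventions $K_{-1}=0_{A}$, $K_{0}=1_{A}$ before launching the induction.
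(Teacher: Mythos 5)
Your proof is correct and follows essentially the same route as the paper: a two-step induction on $n$ with base cases $n=0,1$, using the three-term recurrence for the continuant (the paper obtains it by expanding the defining determinant along the first column, you state it on the last entry — identical here since all entries equal $2_{A}$), followed by reading off the matrix form from the continuant expression of $M_{n}$.
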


\begin{proof}

On raisonne par récurrence double sur $n$. Si $n=0$ ou si $n=1$ alors le résultat est vrai. On suppose qu'il existe $n \in \mathbb{N}^{*}$ tel que $K_{n}(2_{A},\ldots,2_{A})=n_{A}+1_{A}$ et $K_{n-1}(2_{A},\ldots,2_{A})=n_{A}$. En développant le déterminant définissant $K_{n+1}(2_{A},\ldots,2_{A})$ suivant la première colonne, on a :
\[K_{n+1}(2_{A},\ldots,2_{A})=2_{A}K_{n}(2_{A},\ldots,2_{A})-K_{n-1}(2_{A},\ldots,2_{A})=2_{A}n_{A}+2_{A}-n_{A}=(n_{A}+1_{A})+1_{A}.\]

\noindent La formule est vraie pour $n+1$ et donc par récurrence elle est vraie pour tout $n$. On note qu'elle est également vraie pour $n=-1$. En utilisant la formule de $M_{n}(a_{1},\ldots,a_{n})$ qui exprime la matrice avec des continuants, on a la formule matricielle souhaitée.

\end{proof}

\begin{proof}[Preuve algébrique du lemme \ref{45}]

Soit $n \geq 4$ et $M=M_{n}(1_{A},n_{A}-2_{A},1_{A},2_{A},\ldots,2_{A})$. On a 
\begin{eqnarray*}
M &=& M_{n-3}(2_{A},\ldots,2_{A})M_{1}(1_{A})M_{1}(n_{A}-2_{A})M_{1}(1_{A}) \\
  &=& \begin{pmatrix}
   n_{A}-2_{A}   & -n_{A}+3_{A} \\
   n_{A}-3_{A} & -n_{A}+4_{A}
\end{pmatrix}\begin{pmatrix}
   1_{A}   & -1_{A} \\
   1_{A} & 0_{A}
\end{pmatrix}\begin{pmatrix}
   n_{A}-2_{A}   & -1_{A} \\
   1_{A} & 0_{A}
\end{pmatrix}\begin{pmatrix}
   1_{A}   & -1_{A} \\
   1_{A} & 0_{A}
\end{pmatrix} \\
   &=& \begin{pmatrix}
   1_{A}   & -n_{A}+2_{A} \\
   1_{A} & -n_{A}+3_{A}
\end{pmatrix} \begin{pmatrix}
   n_{A}-3_{A}   & -n_{A}+2_{A} \\
   1_{A} & -1_{A}
\end{pmatrix} \\
   &=& \begin{pmatrix}
   -1_{A}   & 0_{A} \\
   0_{A} & -1_{A}
\end{pmatrix}.
\end{eqnarray*}

\end{proof}

La prochaine étape pour effectuer la démonstration du théorème \ref{28} est de s'intéresser à la réduction sur un anneau $A$ de caractéristique $0$ de la solution donnée par le lemme \ref{45}.

\begin{lemma}
\label{48}

Soit $A$ un anneau commutatif unitaire de caractéristique $0$. Soient $n \geq 4$, $k$ appartenant à $[\![3;n-1]\!]$ et $(b_{1},\ldots,b_{k}) \in A^{k}$. Si on peut utiliser $(b_{1},\ldots,b_{k})$ pour réduire $(1_{A},n_{A}-2_{A},1_{A},2_{A},\ldots,2_{A})$ alors $(b_{1},\ldots,b_{k})$ est de la forme $(x,1_{A},2_{A},\ldots,2_{A},y)$ ou $(x,2_{A},\ldots,2_{A},1_{A},y)$ (avec éventuellement un nombre nul de $2_{A}$).

\end{lemma}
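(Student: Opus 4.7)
Notons $m := n_{A} - 2_{A}$ et $C = (c_{1},\ldots,c_{n}) = (1_{A}, m, 1_{A}, 2_{A}, \ldots, 2_{A})$ la $\lambda$-quiddité du lemme \ref{45}. Si $(b_{1},\ldots,b_{k})$ permet de réduire $C$, la définition \ref{24} fournit $\sigma \in D_{n}$ et un $(n-k+2)$-uplet $(a_{i})$ tels que $C^{\sigma} = (a_{1},\ldots,a_{n-k+2}) \oplus (b_{1},\ldots,b_{k})$. L'examen direct de l'opération $\oplus$ montre que les $k-2$ dernières composantes de ce $n$-uplet sont exactement $(b_{2},\ldots,b_{k-1})$, de sorte que ce sous-uplet apparaît tel quel comme facteur consécutif, de longueur $\ell := k-2 \leq n-3$, du mot cyclique $C$ ou de son renversé. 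Par ailleurs, puisque $(b_{1},\ldots,b_{k})$ est elle-même une $\lambda$-quiddité, la factorisation $M_{k}(b_{1},\ldots,b_{k}) = M_{1}(b_{k}) \, M_{k-2}(b_{2},\ldots,b_{k-1}) \, M_{1}(b_{1}) = \pm Id$, jointe à l'inversibilité des $M_{1}(b_{i})$ et à l'expression de $M_{k-2}$ en termes de continuants, fournit la condition nécessaire clé
\[ K_{k-2}(b_{2},\ldots,b_{k-1}) = \pm 1_{A}. \]

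L'idée est alors d'analyser les facteurs consécutifs possibles de $C$ et de montrer que cette condition, conjointement à $\ell \leq n-3$ et à la caractéristique nulle de $A$, n'est satisfaite que par les facteurs de la forme $(1_{A}, 2_{A}, \ldots, 2_{A})$ ou $(2_{A}, \ldots, 2_{A}, 1_{A})$. On distingue trois familles. \emph{Famille (A)}, facteurs ne contenant que des $2_{A}$ : par le lemme \ref{47}, $K_{\ell}(2_{A},\ldots,2_{A}) = (\ell+1)_{A}$, ce qui en caractéristique $0$ impose $\ell \in \{0,-2\}$, impossible puisque $\ell \geq 1$. \emph{Famille (B)}, facteurs $(1_{A}, 2_{A}, \ldots, 2_{A})$ ou leur renversé : un développement direct du déterminant donne $K_{\ell} = \ell_{A} - (\ell-1)_{A} = 1_{A}$, ce qui est compatible et correspond précisément à la conclusion annoncée.

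\emph{Famille (C)}, qui constitue le cœur technique, facteurs contenant $m$ : la structure de $C$ (où $m$ est encadré par deux $1_{A}$ eux-mêmes suivis de $2_{A}$) impose que tout tel facteur s'écrit $(2_{A},\ldots,2_{A},1_{A},m,1_{A},2_{A},\ldots,2_{A})$ avec $i \geq 0$ copies de $2_{A}$ à gauche et $j \geq 0$ à droite, éventuellement privée du $1_{A}$ extrémal de gauche (si $i=0$) ou de droite (si $j=0$), fournissant les sous-cas dégénérés $(m)$, $(1_{A},m)$, $(m,1_{A})$, $(1_{A},m,1_{A})$, $(m,1_{A},2_{A},\ldots,2_{A})$ et $(2_{A},\ldots,2_{A},1_{A},m)$. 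En scindant le continuant autour du bloc central $1_{A},m,1_{A}$ grâce à la formule multiplicative
\[ K_{p+q}(u_{1},\ldots,u_{p},v_{1},\ldots,v_{q}) = K_{p}(u)K_{q}(v) - K_{p-1}(u_{1},\ldots,u_{p-1})K_{q-1}(v_{2},\ldots,v_{q}) \]
et en invoquant le lemme \ref{47}, un calcul direct établit, pour \emph{chacun} de ces sous-cas, la formule uniforme
\[ K_{\ell}(b_{2},\ldots,b_{k-1}) = n_{A} - (\ell+1)_{A}. \]
En caractéristique $0$, la condition $K_{\ell} = \pm 1_{A}$ impose alors $n = \ell$ ou $n = \ell+2$, toutes deux incompatibles avec $\ell \leq n-3$. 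La principale difficulté technique réside précisément dans la vérification de cette formule uniforme, qui demande de traiter séparément et proprement chacun des sous-cas dégénérés avant d'obtenir le cas générique ; une fois acquise, elle élimine en bloc toute la famille (C), ne laissant subsister que la famille (B) et donnant ainsi la forme annoncée pour $(b_{1},\ldots,b_{k})$.
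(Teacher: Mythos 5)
Your argument is correct, but it runs on a different engine than the paper's. Both proofs start the same way: the interior $(b_{2},\ldots,b_{k-1})$ must appear as a consecutive factor of length $k-2\leq n-3$ of the cyclic word $(1_{A},n_{A}-2_{A},1_{A},2_{A},\ldots,2_{A})$ or of its renversé, and one enumerates the possible factors. The difference is in how the bad factors are killed. The paper classifies them by the number of $1_{A}$ in the interior (zéro, un ou deux) and eliminates each offending case by contracting the $1_{A}$'s one at a time — using that a $\lambda$-quiddité contenant $1_{A}$ en position $i$ se raccourcit en soustrayant $1_{A}$ aux voisins — until it reaches a size-$3$ solution $(u,n_{A}-k_{A}+1_{A},v)$ or a size-$4$ solution violating $ab\in\{0_{A},2_{A}\}$, both impossible en caractéristique $0$. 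You instead promote to a universal tool the necessary condition $K_{k-2}(b_{2},\ldots,b_{k-1})=\pm 1_{A}$ (which the paper only uses in its case i)) and compute that continuant directly for every admissible factor via the identity d'Euler et le lemme \ref{47}, obtaining $(\ell+1)_{A}$ for les facteurs ne contenant que des $2_{A}$ et la formule uniforme $n_{A}-(\ell+1)_{A}$ pour tous les facteurs contenant $n_{A}-2_{A}$; I checked this formula on the degenerate subcases and it is correct, and the conclusion $n=\ell$ ou $n=\ell+2$ contredit bien $\ell\leq n-3$. Your route is more uniform — one invariant eliminates all of family (C) at once — at the price of the Euler identity and an induction on the number of $2_{A}$; the paper's route is more combinatorial but must track several distinct endgames. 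One small point to make explicit: your three families do not literally cover the factor $(1_{A},2_{A},\ldots,2_{A},1_{A})$ containing the two $1_{A}$ mais pas $n_{A}-2_{A}$; it is excluded because it has length $n-1>n-3$, which is exactly how the paper dispatches its subcase $k=n+1$, but you should say so rather than leave the trichotomy implicitly justified by the bound $\ell\leq n-3$.
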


\begin{proof}

Notons $(a_{1},\ldots,a_{n})=(1_{A},n_{A}-2_{A},1_{A},2_{A},\ldots,2_{A})$. Si $k=3$ alors nécessairement $(b_{1},\ldots,b_{k})=(1_{A},1_{A},1_{A})$ puisque $(a_{1},\ldots,a_{n})$ ne contient pas $-1_{A}$. On suppose maintenant $n \geq 5$ et $k \geq 4$. Notons que $(a_{n},\ldots,a_{1})$ peut-être obtenu par permutations circulaires de $(a_{1},\ldots,a_{n})$. Donc, si on peut utiliser $(b_{1},\ldots,b_{k})$ pour réduire $(a_{1},\ldots,a_{n})$, il existe en entier $h$ et une $\lambda$-quiddité sur $A$ $(c_{1},\ldots,c_{l})$ tels que $(a_{h+1},\ldots,a_{n},a_{1},\ldots,a_{h})=(c_{1},\ldots,c_{l}) \oplus (b_{1},\ldots,b_{k})$.
\\
\\Dans la suite, on va utiliser la fait suivant qui découle des propriétés de $\oplus$ : si $(c_{1},\ldots,c_{i-1},1_{A},c_{i+1},\ldots,c_{n})$ est une $\lambda$-quiddité sur $A$ alors $(c_{1},\ldots,c_{i-2},c_{i-1}-1_{A},c_{i+1}-1_{A},c_{i+2},\ldots,c_{n})$ l'est aussi.
\\
\\Supposons que l'on peut utiliser $(b_{1},\ldots,b_{k})$ pour réduire $(a_{1},\ldots,a_{n})$. Il existe un entier $j$ non nul tel que $(b_{2},\ldots,b_{k-1})=(a_{j},\ldots,a_{j+k-3})$ et $(b_{1},\ldots,b_{k})$ est une solution de \eqref{a}. On va considérer plusieurs cas :
\\
\\i) Si $(b_{2},\ldots,b_{k-1})$ ne contient pas de $1_{A}$. Nécessairement, $(b_{1},\ldots,b_{k})=(x,2_{A},\ldots,2_{A},y)$. Ceci est impossible car, par le lemme \ref{47}, $K_{k-2}(2_{A},\ldots,2_{A})=k_{A}-1_{A} \neq \pm 1_{A}$ (car $A$ est de caractéristique 0 et et $k \geq 4$).
\\
\\ii) Si $(b_{2},\ldots,b_{k-1})$ contient deux $1_{A}$. On a $k \geq 5$ et on distingue deux cas :
\begin{itemize}
\item $(b_{1},\ldots,b_{k})=(x,\underbrace{2_{A},\ldots,2_{A}}_{l},1_{A},n_{A}-2_{A},1_{A},\underbrace{2_{A},\ldots,2_{A}}_{k-5-l},y)$ avec $0 \leq l \leq k-5$. Si $l \geq 1$, on a, en utilisant le $1_{A}$ à gauche, $(x,\underbrace{2_{A},\ldots,2_{A}}_{l-1},1_{A},n_{A}-3_{A},1_{A},\underbrace{2_{A},\ldots,2_{A}}_{k-5-l},y)$ solution de \eqref{a}. En réitérant ce processus autant que possible, à droite et à gauche, on arrive à une solution de la forme $(u,n_{A}-k_{A}+1_{A},v)$, ce qui est impossible car $n_{A}-k_{A}+1_{A} \neq \pm 1_{A}$ (puisque $A$ est de caractéristique 0 et $4 \leq k \leq n-1$).
\item $(b_{1},\ldots,b_{k})=(x,1_{A},2_{A},\ldots,2_{A},1_{A},y)$ et $k=n+1$, ce qui est impossible.
\\
\end{itemize}

\noindent iii) Si $(b_{2},\ldots,b_{k-1})$ contient un seul $1_{A}$. On distingue plusieurs cas :
\begin{itemize}
\item $(b_{1},\ldots,b_{k})=(x,2_{A},\ldots,2_{A},1_{A},n_{A}-2_{A},y)$. En utilisant le $1_{A}$ et en procédant comme au-dessus, on a $(x-1_{A},n_{A}-k_{A}+1_{A},y)$ solution de \eqref{a}, ce qui est impossible.
\item $(b_{1},\ldots,b_{k}) \sim (x,1_{A},n_{A}-2_{A},y)$. Ce cas est impossible car $1_{A} \times (n_{A}-2_{A}) \neq 0_{A}, 2_{A}$ (puisque $A$ est de caractéristique 0 et $n \geq 5$)
\item $(b_{1},\ldots,b_{k})=(x,n_{A}-2_{A},1_{A},2_{A},\ldots,2_{A},y)$. En utilisant le $1_{A}$ et en procédant comme au-dessus, on a $(x,n_{A}-k_{A}+1_{A},y-1_{A})$ solution de \eqref{a}, ce qui est impossible.
\item $(b_{1},\ldots,b_{k})=(x,1_{A},2_{A},\ldots,2_{A},y)$ ou $(b_{1},\ldots,b_{k})=(x,2_{A},\ldots,2_{A},1_{A},y)$.
\\
\end{itemize}

\noindent Ainsi, $(b_{1},\ldots,b_{k})=(x,1_{A},2_{A},\ldots,2_{A},y)$ ou $(b_{1},\ldots,b_{k})=(x,2_{A},\ldots,2_{A},1_{A},y)$.

\end{proof}

\begin{lemma}
\label{49}

Soit $A$ un anneau commutatif unitaire de caractéristique $0$. Soient $n \geq 4$, $k$ appartenant à $[\![3;n-1]\!]$ et $(b_{1},\ldots,b_{k}) \in A^{k}$. $(b_{1},\ldots,b_{k})$ peut réduire la solution $(1_{A},n_{A}-2_{A},1_{A},2_{A},\ldots,2_{A})$ si et seulement si  $(b_{1},\ldots,b_{k})=(k_{A}-2_{A},1_{A},2_{A},\ldots,2_{A},1_{A})$ ou $(b_{1},\ldots,b_{k})=(1_{A},2_{A},\ldots,2_{A},1_{A},k_{A}-2_{A})$ (avec éventuellement un nombre nul de $2_{A}$).

\end{lemma}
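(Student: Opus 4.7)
The plan is to combine Lemma \ref{48} with a direct continuant computation. By Lemma \ref{48}, any $(b_1,\ldots,b_k)$ that reduces the given solution must have the form $(x, 1_A, 2_A,\ldots,2_A, y)$ or $(x, 2_A,\ldots,2_A, 1_A, y)$ with $k-3$ copies of $2_A$. The second form is the reverse of the first, and since both the $\lambda$-quiddité property and reducibility are $\sim$-invariant, it suffices to pin down $x$ and $y$ in the first case; the other case will follow by symmetry.

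In that first case, I would impose $M_k(x, 1_A, 2_A,\ldots,2_A, y) = \pm Id$ via the continuant formula for $M_k$ recalled at the start of Section \ref{preuve}. The key input is Lemma \ref{47}, which gives $K_j(2_A,\ldots,2_A) = j_A + 1_A$; combined with the standard recursion $K_i(a_1,\ldots,a_i) = a_1 K_{i-1}(a_2,\ldots,a_i) - K_{i-2}(a_3,\ldots,a_i)$, one obtains $K_{k-2}(1_A, 2_A,\ldots,2_A) = 1_A$, so the sign of $M_k$ is forced to be $-Id$. The two off-diagonal vanishing conditions $K_{k-1}(x, 1_A, 2_A,\ldots,2_A) = 0_A$ and $K_{k-1}(1_A, 2_A,\ldots,2_A, y) = 0_A$ then expand respectively to $x = k_A - 2_A$ and $y = 1_A$, yielding $(b_1,\ldots,b_k) = (k_A-2_A, 1_A, 2_A,\ldots,2_A, 1_A)$.

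For the converse direction, I would explicitly construct the complementary $\lambda$-quiddité. Setting $l = n - k + 2 \geq 3$, I would take $(c_1,\ldots,c_l) = (1_A, 2_A,\ldots,2_A, 1_A, l_A - 2_A)$, which is a cyclic rotation of the $\lambda$-quiddité provided by Lemma \ref{45} with $n$ replaced by $l$, and hence itself a $\lambda$-quiddité. A direct unfolding of $\oplus$, using the identity $(l_A - 2_A) + (k_A - 2_A) = n_A - 2_A$, shows that $(c_1,\ldots,c_l) \oplus (k_A-2_A, 1_A, 2_A,\ldots,2_A, 1_A)$ coincides with the cyclic rotation $(a_{k+1},\ldots,a_n,a_1,\ldots,a_k)$ of the target tuple, confirming the reduction. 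The symmetric tuple $(1_A, 2_A,\ldots,2_A, 1_A, k_A - 2_A)$ is then handled by reversing.

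The main obstacle is purely bookkeeping: making sure the continuant recursions, the signs in $M_k = \pm Id$, and the indices in the $\oplus$ decomposition all align properly. Note that the characteristic-$0$ hypothesis enters only through the prior Lemma \ref{48}; once the shape of $(b_1,\ldots,b_k)$ has been constrained, the identities needed are universal ring-theoretic statements.
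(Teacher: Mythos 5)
Your proposal is correct and follows essentially the same route as the paper: Lemma \ref{48} pins down the shape $(x,1_A,2_A,\ldots,2_A,y)$, the continuant identities from Lemma \ref{47} (equivalently, the vanishing of the off-diagonal entries of $M_k$) force $x=k_A-2_A$ and $y=1_A$, and the converse is the same explicit $\oplus$-decomposition of a cyclic rotation of $(1_A,n_A-2_A,1_A,2_A,\ldots,2_A)$ that the paper writes down. The only cosmetic difference is that you compute $x$ and $y$ from $K_{k-1}(b_1,\ldots,b_{k-1})=K_{k-1}(b_2,\ldots,b_k)=0_A$ directly, whereas the paper uses the derived formulas $b_1=-\epsilon K_{k-3}(b_3,\ldots,b_{k-1})$ and $b_k=-\epsilon K_{k-3}(b_2,\ldots,b_{k-2})$; these are the same computation.
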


\begin{proof}

Soient $n \geq 4$ et $k \in [\![3;n-1]\!]$. 
\\
\\Par le lemme \ref{45} et l'invariance par permutations circulaires des solutions, $(k_{A}-2_{A},1_{A},2_{A},\ldots,2_{A},1_{A})$ et $(1_{A},2_{A},\ldots,2_{A},1_{A},k_{A}-2_{A})$ sont des solutions de \eqref{a}. De plus, on a :
\[(1_{A},n_{A}-2_{A},1_{A},2_{A},\ldots,2_{A}) \sim (1_{A},2_{A},\ldots,2_{A},1_{A},k_{A}-2_{A}) \oplus ((n_{A}+2_{A}-k_{A})-2_{A},1_{A},2_{A},\ldots,2_{A},1_{A}).\]

\noindent Si $(b_{1},\ldots,b_{k})$ peut réduire la solution $(1_{A},n_{A}-2_{A},1_{A},2_{A},\ldots,2_{A})$. Si $k=3$, $(b_{1},\ldots,b_{k})=(1_{A},1_{A},1_{A})$. On suppose maintenant $n \geq 5$ et $k \geq 4$. Par le lemme \ref{48}, $(b_{1},\ldots,b_{k})$ est de la forme $(x,1_{A},2_{A},\ldots,2_{A},y)$ ou $(x,2_{A},\ldots,2_{A},1_{A},y)$. Si $(b_{1},\ldots,b_{k})$ est une solution de la forme $(x,1_{A},2_{A},\ldots,2_{A},y)$. Par le lemme \ref{47}, on a $K_{k-2}(1_{A},2_{A},\ldots,2_{A})=K_{k-3}(2_{A},\ldots,2_{A})-K_{k-4}(2_{A},\ldots,2_{A})=k_{A}-2_{A}-(k_{A}-3_{A})=1_{A}$. Ainsi, $M_{k}(b_{1},\ldots,b_{k})=-Id$. De plus, on a 
\begin{itemize}
\item $x=K_{k-3}(2_{A},\ldots,2_{A})=k_{A}-2_{A}$;
\item $y=K_{k-3}(1_{A},2_{A},\ldots,2_{A})=K_{k-4}(2_{A},\ldots,2_{A})-K_{k-5}(2_{A},\ldots,2_{A})=(k_{A}-3_{A})-(k_{A}-4_{A})=1_{A}$.
\end{itemize}
\noindent Si $(b_{1},\ldots,b_{k})$ est une solution de la forme $(x,2_{A},\ldots,2_{A},1_{A},y)$, on procède de la même manière.

\end{proof}

\begin{remark}
{\rm 
Les deux lemmes ci-dessus ne sont plus vrais si l'anneau est de caractéristique non nulle. Par exemple, si $A=\mathbb{Z}/N\mathbb{Z}$, avec $N \geq 3$, et si $n \geq N+1$ on peut utiliser la solution de taille $N$ $(\overline{2},\ldots,\overline{2})$ (voir \cite{M2} Théorème 2.6) pour réduire la solution du lemme \ref{45}.
}
\end{remark}

\noindent On peut maintenant démontrer le résultat souhaité :

\begin{proof}[Démonstration du théorème \ref{28}]

i) On commence par le cas de deux anneaux.
\\
\\Soient $A$ et $B$ deux anneaux commutatifs unitaires de caractéristique $0$. Soit $n \geq 3$. Si $n=3$, il existe une $\lambda$-quiddité irréductible de taille 3 sur $A \times B$ (lemme \ref{35}). On suppose $n \geq 4$.
\\
\\On pose $(a_{1},\ldots,a_{n})=(1_{A},n_{A}-2_{A},1_{A},2_{A},\ldots,2_{A})$ et $(b_{1},\ldots,b_{n})=(2_{B},1_{B},n_{B}-2_{B},1_{B},2_{B},\ldots,2_{B})$. Par le théorème \ref{25} et le lemme \ref{45}, $((a_{1},b_{1}),\ldots,(a_{n},b_{n}))$ est une $\lambda$-quiddité sur $A \times B$. 
\\
\\Supposons par l'absurde que celle-ci est réductible. Par le théorème \ref{25}, il existe $\sigma \in D_{n}$, $3 \leq l, k \leq n-1$, $(c_{1},\ldots,c_{l})$, $(d_{1},\ldots,d_{k})$ deux $\lambda$-quiddités sur $A$ et $(c_{1}',\ldots,c_{l}')$, $(d_{1}',\ldots,d_{k}')$ deux $\lambda$-quiddités sur $B$ tels que :
\[\left\{
    \begin{array}{ll}
        (a_{\sigma.1},\ldots,a_{\sigma.n})=(c_{1},\ldots,c_{l}) \oplus (d_{1},\ldots,d_{k}); \\
        (b_{\sigma.1},\ldots,b_{\sigma.n})=(c_{1}',\ldots,c_{l}') \oplus (d_{1}',\ldots,d_{k}').
    \end{array}
\right. \\ \]

\noindent Par le lemme \ref{49}, $(d_{1},\ldots,d_{k})=(k_{A}-2_{A},1_{A},2_{A},\ldots,2_{A},1_{A})$ ou $(d_{1},\ldots,d_{k})=(1_{A},2_{A},\ldots,2_{A},1_{A},k_{A}-2_{A})$ et $(d_{1}',\ldots,d_{k}')=(k_{B}-2_{B},1_{B},2_{B},\ldots,2_{B},1_{B})$ ou $(d_{1}',\ldots,d_{k}')=(1_{B},2_{B},\ldots,2_{B},1_{B},k_{B}-2_{B})$. On va distinguer les cas :
\begin{itemize}
\item Si $(d_{1},\ldots,d_{k})=(k_{A}-2_{A},1_{A},2_{A},\ldots,2_{A},1_{A})$ et $(d_{1}',\ldots,d_{k}')=(k_{B}-2_{B},1_{B},2_{B},\ldots,2_{B},1_{B})$. $(a_{\sigma.1},\ldots,a_{\sigma.n})$ et $(b_{\sigma.1},\ldots,b_{\sigma.n})$ ont un 1 à la position $l+1$. Or, $(a_{\sigma.1},\ldots,a_{\sigma.n})$ n'a que deux $1_{A}$, en position $\sigma^{-1}.1$ et $\sigma^{-1}.3$, et $(b_{\sigma.1},\ldots,b_{\sigma.n})$ n'a que deux $1_{B}$, en position $\sigma^{-1}.2$ et $\sigma^{-1}.4$. Ceci est absurde.
\item Si $(d_{1},\ldots,d_{k})=(1_{A},2_{A},\ldots,2_{A},1_{A},k_{A}-2_{A})$ et $(d_{1}',\ldots,d_{k}')=(1_{B},2_{B},\ldots,2_{B},1_{B},k_{B}-2_{B})$. En procédant comme dans le cas précédent, on arrive à une absurdité.
\item Si $(d_{1},\ldots,d_{k})=(k_{A}-2_{A},1_{A},2_{A},\ldots,2_{A},1_{A})$ et $(d_{1}',\ldots,d_{k}')=(1_{B},2_{B},\ldots,2_{B},1_{B},k_{B}-2_{B})$. $(b_{\sigma.1},\ldots,b_{\sigma.n})$ doit avoir $n_{B}-2_{B}$ en position $1$, ce qui implique $\sigma^{-1}.3=1$. On a donc $\sigma=r^{2}$ ou $\sigma=sr^{n-3}$. De plus, $(a_{\sigma.1},\ldots,a_{\sigma.n})$ doit avoir $n_{A}-2_{A}$ en position $l$, ce qui implique $\sigma^{-1}.2=l$. Ceci donne $l=2$ ou $l=n$, ce qui est absurde.
\item Si $(d_{1},\ldots,d_{k})=(1_{A},2_{A},\ldots,2_{A},1_{A},k_{A}-2_{A})$ et $(d_{1}',\ldots,d_{k}')=(k_{B}-2_{B},1_{B},2_{B},\ldots,2_{B},1_{B})$. En procédant comme dans le cas précédent, on arrive à une absurdité.
\end{itemize}

\noindent ii) On considère maintenant le cas général. 
\\
\\Soient $I$ un ensemble contenant au moins deux éléments et $(A_{i})_{i \in I}$ une famille d'anneaux commutatifs unitaires indexées par $I$. Par hypothèse, il existe $i_{0}$ et $i_{1}$ dans $I$ tels que $A_{i_{0}}$ et $A_{i_{1}}$ sont de caractéristique $0$. Soit $n \geq 3$. Si $n=3$, il existe une $\lambda$-quiddité irréductible de taille 3 sur $\prod_{i \in I} A_{i}$ (lemme \ref{35}). On suppose $n \geq 4$.
\\
\\Soit $B=A_{i_{0}}$. On pose $(a_{i_{0},1},\ldots,a_{i_{0},n})=(1_{B},n_{B}-2_{B},1_{B},2_{B},\ldots,2_{B})$. Soit $i$ dans $I$ différent de $i_{0}$, on pose $(a_{i,1},\ldots,a_{i,n})=(2_{A_{i}},1_{A_{i}},n_{A_{i}}-2_{A_{i}},1_{A_{i}},2_{A_{i}},\ldots,2_{A_{i}}) $. Par le théorème \ref{25} et le lemme \ref{45}, $((a_{i,1})_{i},\ldots,(a_{i,n})_{i})$ est une $\lambda$-quiddité sur $\prod_{i \in I} A_{i}$. 
\\
\\Supposons par l'absurde que celle-ci est réductible. Par le théorème \ref{25}, $(a_{i_{0},1},\ldots,a_{i_{0},n})$ et $(a_{i_{1},1},\ldots,a_{i_{1},n})$ sont simultanément réductibles. Par le cas i), ceci est absurde. Donc, $((a_{i,1})_{i},\ldots,(a_{i,n})_{i})$ est une $\lambda$-quiddité irréductible de taille $n$ sur $\prod_{i \in I} A_{i}$. 

\end{proof}

Comme $\mathbb{Z}$ est de caractéristique $0$, le corollaire \ref{29} découle immédiatement du théorème \ref{28}. Notons que le cas de $\mathbb{Z} \times \mathbb{Z}$ est très différent de celui de $\mathbb{Z}$ (voir \cite{C} Théorème 3.2).

\begin{examples}
{\rm Soit $n \geq 3$. Il existe une $\lambda$-quiddité irréductible de taille $n$ sur :
\begin{itemize}
\item  $\mathbb{Z}[X] \times \mathbb{Z}[X]$;
\item  $\mathbb{Z} \times \mathbb{Z}[\sqrt{2}]$;
\item $\mathbb{Z} \times (\mathbb{Z}/2\mathbb{Z}) \times \mathbb{Z}$;
\item  $\mathbb{Z}[\sqrt{2}] \times \mathbb{Z}[\sqrt{3}]$.
\end{itemize}
\noindent Notons que, dans le dernier cas, on n'a pas d'information précise sur les $\lambda$-quiddités irréductibles sur $\mathbb{Z}[\sqrt{2}]$ et sur $\mathbb{Z}[\sqrt{3}]$.
}
\end{examples}

\end{document}